\newtheorem{theorem}{Theorem}[section]
\newtheorem{proposition}[theorem]{Proposition}
\newtheorem{lemma}[theorem]{Lemma}
\newtheorem{corollary}[theorem]{Corollary}
\theoremstyle{definition}
\newtheorem{definition}[theorem]{Definition}
\newtheorem{remark}[theorem]{Remark}
\newtheorem{step}{Step}
\title[Cork twisting exotic Stein 4-manifolds]{Cork twisting exotic Stein 4-manifolds}
\author[AKBULUT and YASUI]{Selman Akbulut and Kouichi Yasui}
\thanks{The first named author is partially supported by NSF grants DMS 0905917, DMS 1065955, and the second named author was partially supported by GCOE, Kyoto University, by KAKENHI~21840033 and by JSPS Research Fellowships for Young Scientists.}
\date{February 14, 2011. \: \textit{Revised}: July 20, 2011}
\subjclass[2000]{Primary~57R55, Secondary~57R65}
\keywords{4-manifold; cork; Stein surface; contact manifold; exotic embedding}
\address{Department~of~Mathematics, Michigan State University, E.Lansing, MI, 48824, USA}
\email{akbulut@math.msu.edu}
\address{Department~of~Mathematics, Graduate School~of~Science, Hiroshima~University, 1-3-1 Kagamiyama, Higashi-Hiroshima, Hiroshima, 739-8526, Japan}\email{kyasui@hiroshima-u.ac.jp}
\begin{document}

\begin{abstract}From any $4$-dimensional oriented handlebody  $X$ without 3- and 4-handles and with $b_2\geq 1$, we construct arbitrary many compact Stein $4$-manifolds which are mutually homeomorphic but not diffeomorphic to each other, so that their topological invariants (their fundamental groups, homology groups, boundary homology groups, and  intersection forms) coincide with those of $X$. 
We also discuss the induced contact structures on their boundaries. 
Furthermore, for any smooth $4$-manifold pair $(Z,Y)$ such that the complement $Z-\textnormal{int}\,Y$ is a handlebody without 3- and 4-handles and with $b_2\geq 1$, we construct arbitrary many exotic embeddings of a compact $4$-manifold $Y'$ into $Z$, such that $Y'$ has the same topological invariants as $Y$. 
\end{abstract}

\maketitle

\vspace{-.3in}

\section{Introduction}\label{sec:intro} A basic problem of $4$-manifold topology is to find all exotic copies of  smooth $4$-manifolds, in particular to find various methods of constructing different smooth structures on $4$-manifolds (e.g.\  logarithmic transform \cite{D}, Fintushel-Stern's rational blowdown~\cite{FS1} and knot surgery~\cite{FS2}). The purpose of this paper is to approach this problem by corks and give applications. Since different smooth structures on a $4$-manifold can be explained by existence corks which divide the manifold into two Stein pieces \cite{AM2},  cork twisting Stein manifolds is a central theme of this paper. \smallskip

The first cork was introduced in~\cite{A1}, and was used in \cite{A2} to construct a pair of two simply connected compact $4$-manifolds with boundary and second betti number  $b_2=1$ which are homeomorphic but non-diffeomorphic. Later it turned out that cork twists easily give many such pairs (Akbulut-Matveyev~\cite{AM1}, the authors~\cite{AY1}), where each pair consists of a Stein $4$-manifold and a non-Stein $4$-manifold, hence they are not diffeomorphic.\smallskip

It is thus interesting to find exotic Stein $4$-manifold pairs.  Uniqueness of diffeomorphism types of Stein 4-manifolds bounding certain $3$-maniolds are known $($e.g.\ $\#_n\,S^1\times S^2$ $(n\geq 0)$, for more examples see~\cite{We} and the references mentioned therein$)$. By contrast, Akhmedov-Etnyre-Mark-Smith~\cite{AEMS} constructed infinitely many simply connected compact Stein $4$-manifolds which are mutually homeomorphic but non-diffeomorphic, using knot surgery. Moreover, the induced contact structures on their boundary are mutually isomorphic. Though these 4-manifolds have large second betti number, later  in ~\cite{AY2} for each $b_2\geq 1$, by using corks, the authors constructed pairs of simply connected compact Stein $4$-manifolds which are homeomorphic but non-diffeomorphic.\smallskip

In this paper, by using properties of Stein $4$-manifolds we extend the previous simple cork constructions to an explicit algorithm. Here ($4$-dimentional oriented) \textit{$2$-handlebody} means a compact, connected, oriented smooth $4$-manifold obtained from the $4$-ball by attaching $1$- and $2$-handles. The algorithm goes roughly as follows: Take any $2$-handlebody with $b_2\geq 1$, then change the handle diagram into a certain form and add appropriate corks to  produce compact Stein 4-manifolds; then by twisting these corks detect the change of smooth structures by the adjunction inequalities. This construction generalizes the carving technique of \cite{AM2}.
\smallskip

This process here gives arbitrary many mutually homeomorphic but not diffeomorphic compact Stein 4-manifolds which have the same topological invariants as the given 2-handlebody (see Theorems~\ref{sec:algorithm1:main theorem} and~\ref{sec:variant:main theorem}, for details). We obtain: 
\begin{theorem}\label{sec:intro:th:main}Let $X$ be any $4$-dimentional $2$-handlebody with the second betti number $b_2(X)\geq 1$. Then, for each $n\geq 1$, there exist $2$-handlebodies $X_i$ $(0\leq i\leq n)$ with the following properties: \smallskip \\
$(1)$ The fundamental group, the integral homology groups, the integral homology groups of the boundary, and the intersection form of each $X_i$ $(0\leq i\leq n)$ are isomorphic to those of $X$. \smallskip \\
$(2)$ $X_i$ $(0\leq i\leq n)$ are mutually homeomorphic but non-diffeomorphic. \smallskip \\
$(3)$ Each $X_i$ $(1\leq i\leq n)$  has a Stein structure. \smallskip \\
$(4)$ $X$ can be embedded into $X_0$. Hence, $X_0$ does not admit any Stein structure if $X$ cannot be embedded into any simply connected minimal symplectic $4$-manifold with $b^+_2>1$. $($For more non-existence conditions see Theorems~\ref{sec:algorithm1:main theorem} and~\ref{sec:variant:main theorem}.$)$\smallskip \\
$(5)$ Each $X_i$ $(0\leq i\leq n)$ can be embedded into $X$. 
\end{theorem}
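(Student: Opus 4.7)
The plan is to implement the cork-construction algorithm sketched in the introduction in three main steps: first put $X$ into a Stein handlebody form while preserving topology, then attach corks to produce the family $X_i$, and finally distinguish their smooth structures by the Stein adjunction inequality.

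First I would represent $X$ by a Kirby diagram (dotted circles for 1-handles, framed knots for 2-handles), and by repeated handle slides, isotopies and stabilizations arrange that every 2-handle is attached along a Legendrian knot in the standard contact $\#_k(S^1\times S^2)$ with framing exactly one less than its Thurston--Bennequin number. This modification changes the diagram but not the homeomorphism type (hence not $\pi_1$, homology, boundary homology, or intersection form), and by the Eliashberg--Gompf criterion the resulting 2-handlebody $X'$ carries a Stein structure. Second, I would attach appropriately many carefully chosen corks $(C_1,\tau_1),\dots,(C_n,\tau_n)$ to $X'$ so that each cork is added in Stein form and so that each cork 2-handle contains embedded surfaces whose genus is sensitive to the twisting. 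For any choice of subset of twists one obtains a 2-handlebody $X_i$; since each cork is contractible with involution extending to a homeomorphism (Freedman), all the $X_i$ share the topological invariants of $X$, giving~(1), and each $X_i$ with $i\geq 1$ inherits a Stein structure from its Eliashberg--Gompf diagram, giving~(3).

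To separate smooth structures (part~(2)) I would invoke the adjunction inequality of Lisca--Mati\'c for Stein 4-manifolds: for each $i\geq 1$, the Stein structure on $X_i$ forces a genus lower bound for a chosen second-homology class $\alpha_i$, while in $X_j$ with $j\ne i$ a visible surface arising from the $i$-th cork twist represents $\alpha_i$ with strictly smaller genus, violating that bound and ruling out a diffeomorphism. The base case $X_0$ would be constructed so that $X$ itself embeds, giving the first half of~(4); non-existence of a Stein structure on $X_0$ under the stated hypothesis then follows from the standard symplectic-embedding obstructions applied to such a hypothetical Stein filling. Part~(5) is then automatic, since cork twisting is supported in a contractible piece and each $X_i$ can be embedded back into $X$ by undoing the twist.

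The main obstacle I anticipate is the simultaneous control of the Stein side and the distinguishing side: the corks must be attached in Stein position so that every $X_i$ $(i\geq 1)$ remains Stein, and yet the surfaces used in the adjunction argument must genuinely realize the claimed minimal genera after the relevant twist. This demands a delicate interaction between the Legendrian realization chosen in Step~1 and the precise placement of the cork attaching maps in Step~2, which I expect is where the bulk of the technical work behind Theorems~\ref{sec:algorithm1:main theorem} and~\ref{sec:variant:main theorem} is hidden; once those two pieces are aligned, the pairwise non-diffeomorphism and the existence of the requested family of arbitrary size follow by iterating the cork-twist bookkeeping across the $n$ independent corks.
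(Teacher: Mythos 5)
Your first step contains the fatal gap: you cannot, in general, turn a given $2$-handlebody $X$ into a Stein handlebody by handle slides and isotopies. The framing of each $2$-handle is fixed by the diffeomorphism type, while the Thurston--Bennequin number of its attaching circle can be decreased at will (zig-zags) but is bounded \emph{above} by the maximal Thurston--Bennequin invariant of the knot type; so the condition ``framing $=tb-1$'' is simply unachievable when a framing is too large. The simplest counterexample is $X=S^2\times D^2$ (a $0$-framed unknot), which admits no Stein structure whatsoever by the adjunction inequality. This is exactly why the theorem asserts Stein structures only for $X_i$ with $i\geq 1$ and allows $X_0$ to be non-Stein. Your proposal is also internally inconsistent on this point: if $X'$ were Stein and the corks were attached in Stein position, the untwisted manifold $X_0$ would be Stein as well, contradicting the situation envisioned in part $(4)$.

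The paper's mechanism resolves this by making the cork itself the source of the Stein structure, rather than attaching corks to an already-Stein manifold. A $W^+(p)$-modification threads the attaching circle of a $2$-handle $K$ through a copy of the cork $W_1$; this raises $tb(K)$ by $p$ (Proposition~\ref{sec:attachment:prop:tb and r}), so for $p$ large the Stein framing condition is met no matter what the original framing was, while simultaneously the homology class carried by $K$ (after sliding over the auxiliary handle) is represented only by a surface whose genus has gone up by $p$ (Proposition~\ref{sec:attachments:prop:genus of attachments}). The modification preserves $\pi_1$, homology, boundary homology and the intersection form (Proposition~\ref{prop:relation of attachments}) and admits an embedding back into $X$ (Proposition~\ref{sec:attachment:prop:embed}); its $W^-$ counterpart is the cork twist of the $W^+$ version and receives an embedding of $X$. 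Thus $X_0$ is the all-minus manifold (into which $X$ embeds, giving $(4)$), each $X_i$ with $i\geq 1$ is Stein because its $i$-th cork sits in the $+$ position, and the adjunction inequality applied to the Stein $X_i$ forces the minimal genus of the distinguished basis element to exceed what is realized in $X_j$ for $j<i$, provided the parameters $p_1\ll\cdots\ll p_n$ grow fast enough. Your adjunction-inequality endgame is the right idea, but without the $tb$-raising/genus-raising modification it has no surfaces and no Stein structures to work with.
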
 

As far as the authors know, this result is new even when we ignore Stein structures. Actually, this theorem gives exotic smooth structures for a large class of compact 4-manifolds with boundary (see also Corollary~\ref{sec:non-stein:cor}). In Section~\ref{sec:non-stein}, we also construct arbitrary many exotic non-Stein 4-manifolds.\smallskip

For a given embedding of a 4-manifold, applying the algorithm to its complement, we obtain arbitrary many exotic embeddings of a 4-manifold which has the same the topological invariants as the given manifold (see Theorems~\ref{sec:algorithm1:exotic knottings} and~\ref{sec:variant:exotic knottings}).
\begin{theorem}\label{sec:into:th:exotic embedding}
Let $Z$ and $Y$ be compact connected oriented smooth $4$-manifolds $($possibly with boundary$)$. Suppose that $Y$ is embedded into $Z$ and that its complement $X:=Z-\textnormal{int}\,Y$ is a $2$-handlebody with $b_2(X)\geq 1$. Then, for each $n\geq 1$, there exist mutually diffeomorphic compact connected oriented smooth $4$-manifolds $Y_i$ $(0\leq i\leq n)$ embedded into $Z$ with the following properties. \smallskip \\
$(1)$ The pairs $(Z,Y_i)$ $(0\leq i\leq n)$ are mutually homeomorphic but non-diffeomorphic.\smallskip \\
$(2)$ The fundamental group, the integral homology groups, the integral homology groups of the boundary, and the intersection form of $Y_i$'s $(0\leq i\leq n)$ are isomorphic to those of $Y$.  \smallskip \\
$(3)$ The each complement $X_i:=Z-\textnormal{int}\,Y_i$ $(0\leq i\leq n)$ has the properties of the $X_i$  in Theorem~\ref{sec:intro:th:main} above (corresponding to $X$). 
\end{theorem}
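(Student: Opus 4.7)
The plan is to apply Theorem~\ref{sec:intro:th:main} to the complement $X := Z - \textnormal{int}\, Y$ and lift the resulting handlebodies to embedded submanifolds of $Z$. Applied to the $2$-handlebody $X$ (with $b_2(X) \geq 1$), Theorem~\ref{sec:intro:th:main} yields $X_0, X_1, \ldots, X_n$ that are mutually homeomorphic, pairwise non-diffeomorphic, and share the topological invariants of $X$. From the construction, $X_1, \ldots, X_n$ are obtained from $X_0$ by cork twists along a cork $(C, \tau) \subset \textnormal{int}\, X_0$, so $\partial X_i = \partial X_0$ for all $i$ and the twists restrict to the identity on this boundary.

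Next I realize each $X_i$ as the complement of a submanifold of $Z$. The algorithm builds $X_0$ from $X$ by a sequence of handle modifications (attaching handle/cork pieces along $\partial X$) that, from the ambient viewpoint inside $Z$, correspond to carving a dual package of handles out of $Y$. Performing the attaching moves within $Z$ — placing the new handles inside $Y$ rather than abstractly — defines $Y_0 \subset Z$ with $Z = Y_0 \cup_\partial X_0$ and $Y_0 \hookrightarrow Z$ of complement $X_0$. Because the modifications are exactly those of Theorem~\ref{sec:intro:th:main}, one checks that $Y_0$ inherits the fundamental group, homology, boundary homology, and intersection form of $Y$, giving (2). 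Swapping $X_0$ for $X_i$ along the common boundary $\partial X_0 = \partial Y_0$ produces an ambient $Z_i := Y_0 \cup_\partial X_i$ with $Y_0 \hookrightarrow Z_i$ of complement $X_i$.

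The crucial point is that $Z_i \cong Z$ smoothly: the cork twist that changes the smooth type of $X_0$ should be ambiently trivial in $Z$, because the extra handles carved out of $Y$ to form $Y_0$ provide the geometric cancellations needed to extend the cork involution $\tau$ to a self-diffeomorphism of $(Z \setminus \textnormal{int}\, C) \cup_\tau C$. Setting $Y_i := \phi_i(Y_0)$ for a chosen diffeomorphism $\phi_i : Z_i \to Z$ gives embeddings $Y_i \hookrightarrow Z$ with complement diffeomorphic to $X_i$; the $Y_i$ are mutually diffeomorphic (all diffeomorphic to $Y_0$). Claim (1) then follows: pairs $(Z, Y_i)$ are pairwise non-diffeomorphic because their complements are, and pairwise homeomorphic because the cork twists provide boundary-preserving homeomorphisms $X_i \approx X_j$ (Freedman), which extend by the identity on $Y_0$ to self-homeomorphisms of $Z$ carrying $Y_i$ to $Y_j$. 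Claim (3) is automatic by construction.

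The main obstacle is the diffeomorphism $Z_i \cong Z$ — that is, arranging the cork $(C, \tau)$ and the extra handle data so that the cork twist is smoothly invisible from the $Z$-perspective yet smoothly effective on $X_0$. The proposed mechanism is to ensure, within the algorithm, that the cork together with the handles added to $X$ (equivalently removed from $Y$) forms a contractible subregion of $Z$ to which $\tau$ extends smoothly; the restriction of this extension to $X_0$ alone is not a self-diffeomorphism of $X_0$, so the cork remains effective when one isolates the complement. Executing this at the level of Kirby diagrams — and confirming through the adjunction inequalities supplied by Theorem~\ref{sec:intro:th:main} that the complements truly are pairwise non-diffeomorphic — is the heart of the argument.
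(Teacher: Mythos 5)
Your overall strategy (run the algorithm on the complement $X$, realize the outputs as complements of embedded copies of a fixed $Y_0$, and argue that the cork twists are ambiently trivial in $Z$) is the right one, but the two steps you flag as "the heart of the argument" are exactly the content of the paper's proof, and your proposed mechanisms for them do not work as stated. First, you build $Y_0$ by attaching the new handle/cork pieces \emph{inside} $Y$ ("carving a dual package of handles out of $Y$"). A $W^-$-modification attaches an algebraically but not geometrically cancelling $1$-/$2$-handle pair, so this is not the attachment of a standard $4$-ball, and there is no reason the auxiliary $2$-handle can be attached inside an arbitrary $Y$. The paper goes the opposite way (Proposition~\ref{sec:attachment:prop:embed}(1)(i) and the proof of Theorem~\ref{sec:algorithm1:exotic knottings}): one first replaces all $W^-$-modifications by $W^+$-modifications, observes that the $W^+$-modified manifold becomes $X$ again after attaching a $2$- and a $3$-handle, hence embeds \emph{into} $X\subset Z$; the complement then grows by a dual algebraically cancelling pair, which is what yields property (2) for $Y_0$ (Proposition~\ref{sec:attachment:prop:embed}(1)(ii)) — a computation you assert but do not carry out.

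Second, for the crucial diffeomorphism $Z_i\cong Z$ you propose that the cork together with the extra handles forms a \emph{contractible} region of $Z$ over which $\tau$ extends smoothly. That is not the mechanism, and you give no argument that such a contractible region exists or that $\tau$ extends over it. What actually happens (Lemma~\ref{sec:attachment:lem:self-diffeo} and Proposition~\ref{sec:attachment:prop:embed}(2)(i)) is that the cork $W_1$ created by a $W^+(p)$-modification sits inside a copy of $S^2\times D^2$ (not a contractible piece) built from the auxiliary $2$-handle, and the boundary diffeomorphism induced by the cork twist extends over $S^2\times D^2$ by Gluck's theorem, after one checks by Kirby calculus that both sides of the twist are diffeomorphic to $S^2\times D^2$. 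Without this lemma (or a substitute), your $Z_i$ need not be diffeomorphic to $Z$, and the embeddings $Y_i\hookrightarrow Z$ are not produced. So the proposal correctly outlines the architecture but leaves the two load-bearing steps unproved, and the heuristics offered for them point in directions different from the arguments that actually close the gaps.
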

Note that any compact connected oriented smooth $4$-manifold $Z$ (possibly with boundary) has such a submanifold $Y$, because the 4-ball contains a $2$-handlebody $S^2\times D^2$, for example. Hence this theorem shows that every compact connected oriented smooth $4$-manifold has arbitrary many exotic embeddings into it, and has arbitrary many compact sub 4-manifolds which are mutually homeomorphic but not diffeomorphic. \smallskip

We further state the properties of $X_i$'s in Theorem~\ref{sec:intro:th:main}. At least in the case of $b_2(X)=1$, the induced contact structures on the boundary $\partial X_i$'s have the property below. This also shows that $X_i$'s are mutually non-diffeomorphic. 

\begin{corollary}\label{sec:intro:cor:contact}Let $X$ be any $2$-handlebody with $b_2(X)=1$. Suppose that the intersection form of $X$ is non-zero. Fix $n\geq 1$ and denote by $X_i$ $(1\leq i\leq n)$ the corresponding compact Stein $4$-manifold in Theorem~\ref{sec:intro:th:main}. Let $\xi_i$ $(1\leq i\leq n)$ be the contact structure on the boundary $\partial X_i \; (\cong \partial X_1)$ induced by the Stein structure on $X_i$.  Then the each smooth $4$-manifold $X_i$ $(1\leq i\leq n-1)$ admits no Stein structure compatible with $\xi_j$ for any $j>i$. 
\end{corollary}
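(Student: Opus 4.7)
The plan is to argue by contradiction using the adjunction inequality for Stein 4-manifolds applied to $X_i$, together with explicit control on first Chern classes coming from the algorithm of Theorem~\ref{sec:algorithm1:main theorem}.

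First I would fix the relevant homological data. Since $b_2(X)=1$ and the intersection form is non-zero, choose a generator $\Sigma$ of $H_2(X;\mathbb{Z})/\mathrm{torsion}$ with $m:=\Sigma\cdot\Sigma\neq 0$, and (reversing orientation if needed) assume $m>0$. The homeomorphisms of Theorem~\ref{sec:intro:th:main}(1) identify $\Sigma$ with a generator $\Sigma_i$ of $H_2(X_i;\mathbb{Z})/\mathrm{torsion}$ of the same square $m$, and the cork-twist construction provides a canonical common boundary $M=\partial X_i$. From the algorithm one reads off controlled integers $d_i:=\langle c_1(J_i),\Sigma_i\rangle$ such that $|d_i|$ is strictly increasing in $i$, together with smoothly embedded representatives saturating the adjunction bound $2g_i-2=m+|d_i|$, where $g_i$ denotes the smooth minimal genus of $\Sigma_i$ in $X_i$. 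This is the mechanism already underlying the smooth distinction of the $X_i$'s in Theorem~\ref{sec:intro:th:main}(2).

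Next I would suppose for contradiction that $X_i$ ($i<j$) admits a Stein structure $J$ with $J|_M\cong\xi_j$. Then $c_1(J)|_M=c_1(\xi_j)=c_1(J_j)|_M$. The long exact sequence of $(X_i,M)$ together with Poincar\'e--Lefschetz duality identifies the kernel of $H^2(X_i;\mathbb{Z})\to H^2(M;\mathbb{Z})$ with $m\,H^2(X_i;\mathbb{Z})\cong m\mathbb{Z}$, and pairing with $\Sigma_i$ yields
\begin{equation*}
  \langle c_1(J),\Sigma_i\rangle\;\equiv\;\varepsilon\,d_j\pmod{m},
\end{equation*}
where $\varepsilon=\pm 1$ records whether the homeomorphism $X_i\simeq X_j$ identifies $\Sigma_i$ with $+\Sigma_j$ or $-\Sigma_j$. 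At the same time, the adjunction inequality for $(X_i,J)$ applied to a minimal-genus representative of $\Sigma_i$ yields $|\langle c_1(J),\Sigma_i\rangle|\leq 2g_i-2-m=|d_i|$.

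To finish I would combine the congruence with this bound: the coset $\varepsilon d_j+m\mathbb{Z}$ must then meet the interval $[-|d_i|,|d_i|]$, which is a residue-arithmetic statement I need to refute. This is where the fine details of the algorithm enter: the rotation numbers of the Legendrian knots used in the cork-twist step of Theorem~\ref{sec:algorithm1:main theorem} can be arranged so that the $d_i$'s grow in a pattern whose residues modulo $m$ remain strictly separated as $i$ increases. The main obstacle is precisely this last residue-separation check; it is strictly stronger than the bare inequality $|d_i|\neq|d_j|$ that sufficed to distinguish the $X_i$'s smoothly, and I expect it to be handled by direct inspection of the Kirby diagrams underlying the algorithm's proof. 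Once it is in hand, the adjunction inequality and the congruence together produce the required contradiction, and as a byproduct re-prove that the $X_i$'s are mutually non-diffeomorphic.
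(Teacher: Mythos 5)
Your reduction to a congruence modulo $m$ is where the argument breaks down, and the gap you flag at the end is not fillable by the route you suggest. The only information you extract from the hypothesis that $J$ is compatible with $\xi_j$ is $c_1(J)|_M=c_1(\xi_j)$, which via the exact sequence of the pair pins down $\langle c_1(J),\Sigma_i\rangle$ only modulo $m$. This is too weak: in the case $|m|=1$ the kernel of $H^2(X_i;\mathbf{Z})\to H^2(M;\mathbf{Z})$ is everything, the congruence is vacuous, and your method yields nothing, even though the corollary still has content there. More generally, the construction behind Theorem~\ref{sec:algorithm1:main theorem} only requires the $p_i$ to form a sufficiently fast-growing sequence (Definition~\ref{sec:the algorithm: def: p_i}); nothing controls the residues of $d_i=2p_i+(t_0-1)-m_0+|r_0|$ modulo $m$, so the ``residue separation'' you hope to read off the Kirby diagrams is not guaranteed for the manifolds the corollary actually refers to. (One could re-choose the $p_i$ to force it when $|m|\geq 2$, but that changes the statement and still leaves $|m|=1$ untouched.)

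The missing ingredient is the three-dimensional invariant $d_3(\xi)=\frac14\bigl(c_1(X)^2-2e(X)-3\sigma(X)\bigr)$ set up in Section~\ref{sec:stein_contact}. A Stein structure $J$ on $X_i$ compatible with $\xi_j$ forces the induced contact structure to \emph{equal} $\xi_j$, hence to have the same $d_3$; since $e$ and $\sigma$ agree for the homeomorphic $X_i$ and $X_j$, this gives $c_1(X_i,J)^2=c_1(X_j)^2$ exactly, and Lemma~\ref{sec:stein:lem:contact} (applicable because $b_2=1$ and $v^2=m_0\neq 0$) converts this into the on-the-nose equality $\lvert\langle c_1(J),v^{(i)}_0\rangle\rvert=\lvert\langle c_1(X^{(n)}_j),v^{(j)}_0\rangle\rvert=2p_j+(t_0-1)-m_0+|r_0|$, not merely a congruence. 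Feeding this into the adjunction inequality for $(X_i,J)$ and invoking condition (iv) of Definition~\ref{sec:the algorithm: def: p_i} together with $p_{j-1}\geq p_i$ contradicts the fact that $v^{(i)}_0$ is represented by a genus $g_0+p_i$ surface (Lemma~\ref{sec:main:lem:genus}). So the correct proof is structurally close to your adjunction-plus-$c_1$ outline, but it must exploit the full contact invariant $d_3$, not just the restriction $c_1(\xi_j)$.
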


It is interesting to discuss cork structures of 4-manifolds (see~\cite{AY1}, \cite{AY3}, \cite{AY4}).  In~\cite{AY4}, the authors  constructed the following example: For each $n\geq 2$, there are $n$ mutually disjoint embeddings of the same cork into a simply connected compact $4$-manifold $Z_n$ with boundary, so that twisting $Z_n$ along each copy of the cork produces mutually distinct $n$ smooth structures on  $Z_n$.
However, $b_2(Z_n)$ increases when $n$ increases. The above $X_i$'s have the structures below.
We also discuss infinitely many disjoint embeddings in Section~\ref{sec:noncompact}.
\begin{corollary}\label{sec:main:cor:cork:compact case}Let $X$ be any $2$-handlebody with $b_2\geq 1$. For each $n\geq 1$, there exist $2$-handlebodies $X_i$ $(0\leq i\leq n)$, a cork $(C,\tau)$, disjointly embedded copies $C_i$ $(1\leq i\leq n)$ of $C$ into $X_0$ with the following properties:\smallskip\\
$(1)$ $X_i$ $(0\leq i\leq n)$ are mutually homeomorphic but non-diffeomorphic. \smallskip\\
$(2)$ Each $X_i$ $(1\leq i\leq n)$ is the cork twist of $X_0$ along $(C_i,\tau)$.\smallskip\\
$(3)$ Each $X_i$ $(1\leq i\leq n)$ is the manifold of  Theorem~\ref{sec:intro:th:main}, corresponding to this $X$.
\end{corollary}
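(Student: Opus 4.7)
The strategy is to observe that the explicit algorithm used to prove Theorem~\ref{sec:intro:th:main} (spelled out in Theorems~\ref{sec:algorithm1:main theorem} and~\ref{sec:variant:main theorem}) already realizes the manifolds $X_0,X_1,\dots,X_n$ as cork twists of a common underlying $4$-manifold along $n$ disjointly embedded copies of a single cork. Thus the corollary is essentially a matter of unpacking that construction and packaging the output in the claimed form.

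Concretely, I would proceed as follows. First, review the algorithm: one modifies the handle diagram of $X$ so that $n$ disjoint cork-attaching regions are exposed, then attaches $n$ disjoint copies $C_1,\dots,C_n$ of a single cork $(C,\tau)$, and produces the $X_i$ by selectively twisting copies. Define $X_0$ to be the $4$-manifold obtained when none of the $C_i$ is twisted, and for $1\le i\le n$ define $X_i$ to be the cork twist of $X_0$ along $(C_i,\tau)$. Condition~(2) then holds by definition, and condition~(3) — that each such $X_i$ agrees with the manifold of Theorem~\ref{sec:intro:th:main} — follows directly from the algorithm, which constructs the manifolds of Theorem~\ref{sec:intro:th:main} in exactly this manner. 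Condition~(1) is immediate from Theorem~\ref{sec:intro:th:main}(2), since mutual homeomorphism of cork twists is Freedman's theorem (contractibility of $C$), while mutual non-diffeomorphism is the main content of that theorem, proved via the adjunction inequalities on the Stein structures on $X_1,\dots,X_n$.

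The principal technical point — and the one step I would expect to require actual work rather than bookkeeping — is verifying that the algorithm can be arranged uniformly so that a \emph{single} cork $(C,\tau)$ is attached at $n$ mutually \emph{disjoint} loci inside one fixed ambient handlebody $X_0$, and that twisting at any one locus produces exactly the $X_i$ of Theorem~\ref{sec:intro:th:main}. In particular one must check that the adjunction-inequality data needed to distinguish $X_i$ from $X_j$ depends only on the choice of which $C_i$ is twisted, not on having built $X_i$ from a separate starting handlebody. Once the algorithm of Theorems~\ref{sec:algorithm1:main theorem} and~\ref{sec:variant:main theorem} is reread with this uniformity in mind, the corollary follows without additional argument.
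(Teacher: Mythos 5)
Your proposal is correct and follows the paper's own route: the paper deduces the corollary directly from Theorem~\ref{sec:algorithm1:main theorem}, whose part $(6)$ records exactly that the $n$ local $W^-(p_i)$-modifications applied to disjoint segments of the attaching circle of $K_0$ produce $n$ disjoint copies of the single cork $(W_1,f_1)$ inside $X^{(n)}_0$, with $X^{(n)}_i$ the twist along the $i$-th copy. The ``uniformity'' point you flag as needing work is in fact automatic from the construction, since the modifications are local and disjoint by design.
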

In a forth coming paper, we will discuss Theorems~\ref{sec:intro:th:main} and~\ref{sec:into:th:exotic embedding} in the case of  $b_2(X)=0$,  under some conditions. \smallskip

This paper is organized as follows. In Sections~\ref{sec:cork} and \ref{sec:stein_contact} we briefly discuss basics of corks, Stein 4-manifolds, and contact 3-manifolds. In Section~\ref{sec:modification}, we study effects of certain operations related to corks. In Section~\ref{sec:main_algorithm}, we give the algorithm and prove Theorems~\ref{sec:intro:th:main} and \ref{sec:into:th:exotic embedding} and Corollary~\ref{sec:main:cor:cork:compact case}. In Section~\ref{sec:variant}, we strengthen the algorithm. In Section~\ref{sec:contact}, we prove Corollary~\ref{sec:intro:cor:contact}. In Section~\ref{sec:noncompact}, we construct infinitely many disjoint embeddings of a fixed cork into a noncompact $4$-manifolds. In Section~\ref{sec:example}, we apply Theorems~\ref{sec:intro:th:main} and \ref{sec:into:th:exotic embedding} to some examples $X$ and $(Y,Z)=(S^4, \Sigma_g\times D^2)$ $(g\geq 1)$, where $\Sigma_g$ denotes the closed surface of genus $g$. In Section~\ref{sec:non-stein}, we construct arbitrary many compact Stein 4-manifolds and arbitrary many non-Stein 4-manifolds which are mutually homeomorphic but non-diffeomorphic. 
\medskip\\
\textbf{Acknowledgements.} The authors would like to thank the referee for helpful comments and pointing out minor errors. The second author also would like to thank Kazunori Kikuchi, Takefumi Nosaka and Motoo Tange for useful comments. 
\section{Corks}\label{sec:cork}
In this section, we recall corks. For more details, the reader can consult ~\cite{AY1}.

\begin{definition}
Let $C$ be a compact contractible Stein $4$-manifold with boundary and $\tau: \partial C\to \partial C$ an involution on the boundary. 
We call $(C, \tau)$ a \textit{cork} if $\tau$ extends to a self-homeomorphism of $C$, but cannot extend to any self-diffeomorphism of $C$. 
For a cork $(C,\tau)$ and a smooth $4$-manifold $X$ which contains $C$, a {\it cork twist of $X$ along $(C,\tau)$ }is defined to be the smooth $4$-manifold obtained from $X$ by removing the submanifold $C$ and regluing it via the involution $\tau$.  Note that, any cork twist does not change the homeomorphism type of $X$ (see the remark below). A cork $(C, \tau)$ is called a \textit{cork of $X$} if the cork twist of $X$ along $(C, \tau)$ is not diffeomorphic to $X$. 
\end{definition}
\begin{remark}
In this paper, we always assume that corks are contractible. (We did not assume this in the more general definition of~\cite{AY1}.) Freedman's theorem (cf.\ \cite{B}) implies that every self-diffeomorphism of the boundary $\partial C$ extends to a self-homeomorphism of $C$, when $C$ is a compact contractible smooth $4$-manifold. 
\end{remark}

\begin{definition}Let $W_n$ be the contractible smooth 4-manifold shown in Figure~$\ref{fig1}$. Let $f_n:\partial W_n\to \partial W_n$ be the obvious involution obtained by first surgering $S^1\times D^3$ to $D^2\times S^2$ in the interior of $W_n$, then surgering the other imbedded $D^2\times S^2$ back to $S^1\times D^3$ (i.e. replacing the dot and ``0'' in Figure ~$\ref{fig1}$). Note that the diagram of $W_n$ is induced from a symmetric link.
\begin{figure}[ht!]
\begin{center}
\includegraphics[width=1.1in]{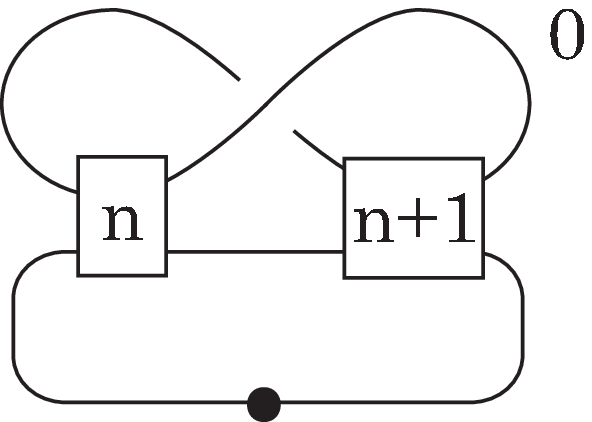}
\caption{$W_n$}
\label{fig1}
\end{center}
\end{figure}
\end{definition}


\begin{theorem}[{\cite[Theorem 2.5]{AY1}}]\label{th:cork}
For $n\geq 1$, the pair $(W_n, f_n)$ is a cork. 
\end{theorem}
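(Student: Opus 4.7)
The plan is to verify each of the four requirements in the definition of a cork for $(W_n, f_n)$: (i) $W_n$ is a compact contractible Stein 4-manifold, (ii) $f_n$ is an involution of $\partial W_n$, (iii) $f_n$ extends to a self-homeomorphism of $W_n$, and (iv) $f_n$ does \emph{not} extend to a self-diffeomorphism of $W_n$. The first three are short, and the last is the main obstacle.

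For (i), the handle diagram of $W_n$ in Figure~\ref{fig1} has a single 1-handle (the dotted circle) and a single 2-handle (the zero-framed circle), and a routine handle calculus shows that the 2-handle attaching circle algebraically links the dotted circle exactly once. This kills the generator of $\pi_1$ coming from the 1-handle and gives trivial homology, so $W_n$ is contractible by Whitehead's theorem. For the Stein structure I would isotope the diagram into standard Legendrian position so that the attaching circle of the 2-handle becomes a Legendrian knot whose framing is one less than its Thurston--Bennequin number, and then invoke Eliashberg's criterion to produce the Stein structure.

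For (ii) and (iii), the involution $f_n$ is well-defined precisely because the link in Figure~\ref{fig1} is symmetric under exchanging the dotted and zero-framed components: surgering the belt sphere of the 1-handle converts the dot into a zero, and the reverse surgery on the original 2-handle converts its zero into a dot, and by the symmetry of the link the resulting diagram again presents $W_n$. Iterating this operation returns the starting configuration, so $f_n$ is an involution of $\partial W_n$ up to isotopy. Since $W_n$ is compact and contractible, the Remark above (Freedman's theorem) then supplies the required extension of $f_n$ to a self-homeomorphism of $W_n$.

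The main difficulty is (iv). The strategy is to exhibit an embedding of $W_n$ into an ambient smooth 4-manifold $X_n$ such that the cork-twisted manifold $X_n' := (X_n \setminus \mathrm{int}\,W_n) \cup_{f_n} W_n$ is not diffeomorphic to $X_n$; any smooth extension of $f_n$ over $W_n$ would glue to a global diffeomorphism $X_n \cong X_n'$, so producing such an ambient obstructs the extension. The natural candidate for $X_n$ is obtained by attaching one or more 2-handles to $W_n$ along a Legendrian curve in $\partial W_n$ chosen with Thurston--Bennequin framing so that $X_n$ is Stein, and hence (by Lisca--Matic or Akbulut--Ozbagci type embedding theorems) embeds symplectically in a minimal symplectic closed 4-manifold with nontrivial Seiberg--Witten invariant; meanwhile, after the cork twist the same 2-handle is attached along a dual knot whose slice genus forces the adjunction inequality to fail on any would-be symplectic/Stein filling of $X_n'$. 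Seiberg--Witten theory therefore distinguishes $X_n$ from $X_n'$, and this is the step I expect to be the hard part, since it requires a careful choice of the attaching knot and an explicit computation of the two Thurston--Bennequin bounds before and after the involution.
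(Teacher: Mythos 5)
First, note that the paper does not prove this statement at all: Theorem~\ref{th:cork} is imported verbatim from \cite[Theorem 2.5]{AY1}, so there is no in-paper argument to compare against. Measured against the actual proof in \cite{AY1}, your overall strategy is the standard and correct one: parts (i)--(iii) are routine (algebraic linking number one gives contractibility, the symmetry of the link gives the involution, and Freedman/Boyer gives the topological extension), and the non-extension of $f_n$ to a diffeomorphism is indeed detected by attaching a $2$-handle to $W_n$ along a suitable Legendrian curve so that the result is Stein, embedding that Stein piece in a closed minimal symplectic $4$-manifold, and observing that the corresponding attachment along $f_n(\gamma)$ produces a homologically essential low-genus surface (a sphere) violating the adjunction inequality.

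The genuine gap is that step (iv) is never executed: you name the shape of the argument but do not produce the attaching curve $\gamma$, do not verify that it can be realized as a Legendrian knot with Thurston--Bennequin invariant making the chosen framing equal to $tb-1$ (so that $X_n$ is actually Stein), and do not carry out the computation showing that after the twist the class is represented by a sphere of nonnegative (or $\ge -1$) square, which is what actually contradicts the adjunction inequality. This is not a minor bookkeeping omission --- it is the entire content of the theorem; without an explicit $\gamma$ and the two framing/genus computations, nothing distinguishes $W_n\cup_\gamma h$ from $W_n\cup_{f_n(\gamma)} h$, and one cannot even confirm that the candidate $X_n$ admits a Stein structure. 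A secondary imprecision: the correct logic is that $X_n$ is Stein (hence satisfies adjunction via Lisca--Mati\'c/Akbulut--Ozbagci embedding) while $X_n'$ visibly contains an essential sphere of square $\ge 2g-2$ and therefore admits no Stein structure at all; phrasing this as a failure of adjunction ``on any would-be Stein filling of $X_n'$'' conflates fillings of the boundary with Stein structures on the $4$-manifold itself. As written, the proposal is a plan for a proof rather than a proof.
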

\section{Stein $4$-manifolds and contact $3$-manifolds}\label{sec:stein_contact}In this section, we briefly recall basics of Stein $4$-manifolds and contact $3$-manifolds. For the definition of basic terms and more details, the reader can consult Gompf-Stipsicz~\cite{GS} and Ozbagci-Stipsicz~\cite{OS1}.  
In this paper, we use Seifert framings and abbreviate them to framings. (When a knot goes over 4-dimentional $1$-handles, then convert the diagram into the dotted circle notation and calculate its Seifert framing. cf.~\cite{GS}). We use the following terminologies throughout this paper. 

\begin{definition}\label{sec:stein:def:handlebody}
$(1)$ For a Legendrian knot $K$ in $\#n(S^1\times S^2)$ $(n\geq 0)$, we denote by $tb(K)$ and $r(K)$ the Thurston-Bennequin number and the rotation number of $K$, respectively.\smallskip\\
$(2)$ We call a compact connected oriented 4-dimentional handlebody a \textit{$2$-handlebody} if it consists of one $0$-handle and $1$- and $2$-handles. We call a subhandlebody a \textit{sub $1$-handlebody} if it consists of $0$- and $1$-handles of the whole handlebody.\smallskip\\
$(3)$ We call a $2$-handlebody a \textit{Legendrian handlebody} if its $2$-handles are attached to an oriented framed Legendrian link in $\partial(D^4\cup \textit{$1$-handles})=\#n(S^1\times S^2)$ $(n\geq 0)$. It is known that every $2$-handlebody can be changed into a Legendrian handlebody by an isotopy of the attaching link of $2$-handles, and orienting its components. \smallskip\\
$(4)$ We call a Legendrian handlebody a \textit{Stein handlebody} if the framing of its each $2$-handle $K$ is $tb(K)-1$. 
\end{definition}


Next we recall the following useful theorem. 

\begin{theorem}[Eliashberg~\cite{E1}, cf.~\cite{G}, \cite{GS}]\label{sec:stein:th:stein_existence}
A compact, connected, oriented, smooth $4$-manifold admits a Stein structure if and only if it can be represented as a Stein handlebody. 
\end{theorem}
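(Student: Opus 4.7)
The plan is to prove the two directions separately. For the ``if'' direction, that a Stein handlebody admits a Stein structure, I would proceed by induction on the handles, starting from the standard Stein structure on $D^4$ viewed as the closed unit ball in $\mathbb{C}^2$ with plurisubharmonic function $|z|^2$, whose boundary $S^3$ carries the standard tight contact structure. First I would extend the Stein structure across each $1$-handle using Eliashberg's subcritical handle attachment lemma: since $1$-handles lie strictly below the middle dimension, they can always be attached preserving plurisubharmonicity, with substantial flexibility in the local model. The resulting boundary is $\#_n(S^1\times S^2)$ with its standard tight contact structure, which is exactly where the Legendrian data of the $2$-handles is specified.

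For each $2$-handle in turn, the attaching condition (Legendrian attaching circle $K$ with framing $tb(K)-1$) is precisely Eliashberg's critical handle attachment condition. The key point is that a $2$-handle attached along a Legendrian knot $K$ admits a local Stein model if and only if the attaching framing equals $tb(K)-1$: the $-1$ twist relative to the contact framing is exactly what allows the plurisubharmonic function to extend over the handle with a nondegenerate Morse critical point of index~$2$ at the core. Iterating over all $2$-handles, and observing that at each stage the boundary inherits a contact structure from the Stein structure just constructed (so the Legendrian data for the next handle makes sense), completes this direction.

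For the converse, I would use that a Stein manifold admits an exhausting plurisubharmonic function $\varphi$, which after a small perturbation can be taken Morse. The essential input from complex analysis is that a plurisubharmonic Morse function on a complex $n$-manifold has all critical points of Morse index at most $n$, since the complex Hessian is positive on an $n$-dimensional complex subspace of the tangent space at each critical point. For our complex surface this forces indices to lie in $\{0,1,2\}$, producing a handle decomposition with only $0$-, $1$-, and $2$-handles and no $3$- or $4$-handles. Ordering the handles by critical value, the sublevel sets $\{\varphi\le c\}$ have boundaries carrying natural contact structures (the field of complex tangencies), and analyzing a neighborhood of each index-$2$ critical point shows that its descending sphere meets the boundary in a Legendrian knot whose framing is $tb-1$ with respect to this induced contact structure.

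The main obstacle is the critical $2$-handle attachment in the ``if'' direction: one must construct a plurisubharmonic function on a standard $2$-handle neighborhood that matches the Stein data along the Legendrian attaching region, and this requires a careful local model. This was Eliashberg's central contribution, and I would refer to Gompf's explicit exposition in \cite{G} and \cite{GS} for the local construction, including the verification that the extended function is genuinely strictly plurisubharmonic and the proof that the $(tb-1)$ framing is exactly what is needed to keep the complex tangencies consistent across the attaching region.
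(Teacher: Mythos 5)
This theorem is quoted in the paper as a known result of Eliashberg (with Gompf's topological reformulation) and is not reproved there; your outline is precisely the standard argument from those references --- subcritical and Legendrian $(tb-1)$-framed handle attachments for one direction, and the index $\le n$ bound for plurisubharmonic Morse functions plus the analysis of complex tangencies for the converse. The sketch is correct, and deferring the local model for the critical $2$-handle attachment to \cite{G} and \cite{GS} is appropriate given that the paper itself treats this theorem as a black box.
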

We call a compact smooth $4$-manifold with a Stein structure a \textit{compact Stein $4$-manifold}. Recall that a Stein structure induces an almost complex structure. Thus the first Chern class $c_1$ of a compact Stein $4$-manifold is defined. The following useful theorems are known and play important roles in this paper. 
\begin{theorem}[Gompf~\cite{G}, cf.\ \cite{GS}]\label{sec:stein:th:stein_rotation}
Let $X$ be a Stein handlebody. The first Chern class $c_1(X)\in H^2(X;\mathbf{Z})$ is represented by a cocycle whose value on each $2$-handle $h$ attached along a Legendrian knot $K$ is $r(K)$. Here each $2$-handle is oriented according to the orientation of the corresponding Legendrian knot.\end{theorem}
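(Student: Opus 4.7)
The plan is to view $c_1(X)$ as an obstruction class and compute it one $2$-handle at a time. First I would observe that for a Stein handlebody $X$, the sub $1$-handlebody $X_1$ consisting of the $0$-handle and the $1$-handles deformation retracts to a wedge of circles, so $H^2(X_1;\mathbf{Z}) = 0$ and the complex tangent bundle $(TX|_{X_1}, J)$ is trivial as a complex vector bundle. Fix a complex trivialization $\Phi$ of $TX|_{X_1}$. Since $X$ is built from $X_1$ by attaching $2$-handles along an oriented Legendrian link, $c_1(X)$ is represented by the cellular cocycle whose value on each $2$-handle $h$ is the obstruction to extending $\Phi$ across $h$.

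Next I would interpret this obstruction in terms of the Legendrian attaching data. Restricting $\Phi$ to $\partial X_1 = \#n(S^1\times S^2)$, the induced contact structure $\xi$ is the standard tight one, and, viewed via $J$, it is a complex line subbundle of $T(\partial X_1)$. The trivialization $\Phi$ in particular trivializes $\xi$ as a complex line bundle. Along a Legendrian attaching knot $K$, the tangent vector field $T_K$ sits inside $\xi|_K$, and the rotation number $r(K)$ is by definition its winding number against this trivialization.

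The third step is to compare the obstruction with $r(K)$ via the local model of a Stein $2$-handle. Eliashberg's model for attaching a $2$-handle along $K$ with framing $\operatorname{tb}(K)-1$ provides a canonical extension of the almost complex structure $J$ over $h$; in this model, the complex trivialization $\Phi$ extends over a collar of $K$ once one frame direction is aligned with $T_K$. The discrepancy between this extension and $\Phi$ on $\partial X_1$ is measured precisely by how many times $T_K$ winds inside $\xi|_K$ relative to the given trivialization, namely $r(K)$. Summing over $2$-handles and using naturality of the obstruction cocycle, $c_1(X)$ is represented by the claimed cocycle.

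The main obstacle will be the careful bookkeeping in the third step: identifying the relative first Chern class of the extended almost complex structure across the Stein $2$-handle with the rotation number requires working explicitly inside Eliashberg's local model and tracking the trivialization of $\xi$ against the Legendrian framing. Once this local computation is checked (and once one verifies that the choice of almost complex structure homotopic to $J$ does not affect the cocycle class), the global cohomological statement drops out immediately from the handle decomposition.
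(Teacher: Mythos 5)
The paper offers no proof of this statement: it is quoted from Gompf~\cite{G} (cf.~\cite{GS}) as a known result, so there is no internal argument to compare yours against. That said, your outline is essentially the standard obstruction-theoretic proof from those references: trivialize $(TX,J)$ complexly over the sub $1$-handlebody, note that $c_1(X)$ is the primary obstruction to extending the induced trivialization of the determinant line $\Lambda^2_{\mathbf{C}}TX$ over the $2$-cells, and identify the obstruction on each $2$-handle with the winding of the Legendrian tangent direction inside the contact planes, i.e.\ with $r(K)$. The one point you should make explicit is the compatibility of conventions when $1$-handles are present: the trivialization of $\xi\subset T(\#n(S^1\times S^2))$ induced by your $\Phi$ must agree with the one implicitly used to define $r(K)$ from the front projection in standard form. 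Homotopy classes of trivializations of the (topologically trivial) complex line bundle $\xi$ form a torsor over $H^1(\#n(S^1\times S^2);\mathbf{Z})\cong\mathbf{Z}^n$; changing the trivialization alters the cocycle only by a coboundary, so the class $c_1(X)$ is unaffected, but the individual values on the $2$-handles are exactly $r(K)$ only for the standard choice. With that convention pinned down, your third step is precisely Gompf's local computation in the Eliashberg--Weinstein handle model, and the argument is sound.
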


Note that the theorem below contains the case where the genus and the self-intersection number are zero, unlike the usual adjunction inequality for closed 4-manifolds. 
\begin{theorem}[Akbulut-Matveyev~\cite{AM1}, cf.~\cite{OS1}]Let $X$ be a compact Stein $4$-manifold and $\Sigma$ a smoothly embedded genus $g\geq 0$ closed surface in $X$. Denote by $[\Sigma]$ the second homology class of $X$ represented by $\Sigma$. If $[\Sigma]\neq 0$, then the following adjunction inequality holds: 
\begin{equation*}
[\Sigma]^2+\lvert \langle c_1(X), [\Sigma] \rangle\rvert\leq 2g-2.
\end{equation*}
\end{theorem}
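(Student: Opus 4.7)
The approach is to reduce the inequality on the compact Stein manifold $X$ to the Seiberg-Witten adjunction inequality on a closed symplectic $4$-manifold. First I would apply the symplectic capping theorem of Lisca-Matic (using embeddings of Stein fillings into minimal Kähler surfaces of general type), or equivalently the Akbulut-Ozbagci construction via Lefschetz fibrations, to embed $X$ as a Stein domain in a closed symplectic $4$-manifold $(\tilde X,\omega)$ with $b_2^+(\tilde X)>1$ and with $c_1(\tilde X,\omega)|_X=c_1(X,J)$. One can arrange the construction so that the inclusion induces an injection $H_2(X;\mathbf{Z})\hookrightarrow H_2(\tilde X;\mathbf{Z})$, so that $[\Sigma]$ remains nontrivial in $\tilde X$.

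Next I would invoke the generalized Seiberg-Witten adjunction inequality of Morgan-Szabo-Taubes: for any closed oriented smooth $4$-manifold $\tilde X$ of Seiberg-Witten simple type with $b_2^+>1$, every basic class $K$ and every smoothly embedded closed oriented genus $g$ surface $\Sigma\subset\tilde X$ with $[\Sigma]\neq 0$ satisfy
\[
2g-2\ \geq\ [\Sigma]^2+|\langle K,[\Sigma]\rangle|,
\]
with no restriction on the sign of $[\Sigma]^2$ or on $g$. By Taubes' theorem $\mathrm{SW}=\mathrm{Gr}$, every closed symplectic $4$-manifold with $b_2^+>1$ is of simple type and has $\pm c_1(\tilde X,\omega)$ among its basic classes. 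Substituting $K=\pm c_1(\tilde X,\omega)$ and using $c_1(\tilde X,\omega)|_X=c_1(X)$ together with $[\Sigma]\in H_2(X)\subset H_2(\tilde X)$, the desired inequality on $X$ is read off directly.

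The main obstacle, and the point where this statement genuinely strengthens the classical adjunction inequality, is the case of a $2$-sphere $\Sigma$ (so $2g-2=-2$) with $[\Sigma]^2\leq 0$. This case is invisible to the original Kronheimer-Mrowka bound but is exactly what the Morgan-Szabo-Taubes refinement (paired with Taubes to guarantee enough basic classes) supplies. A secondary technical step is arranging the injectivity $H_2(X)\hookrightarrow H_2(\tilde X)$; in both the Lisca-Matic and Akbulut-Ozbagci caps this is automatic because the cap is attached only along $\partial X$ and introduces no $3$-handles that could kill classes of $X$, and in any case it suffices to know that $[\Sigma]$ is nonzero in $\tilde X$, which one can always ensure by the construction.
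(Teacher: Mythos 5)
Your proposal is correct and follows essentially the same route as the paper, which simply adapts the proof of Ozbagci--Stipsicz's Theorem~13.3.8 (cap the Stein domain off into a closed symplectic/K\"ahler $4$-manifold with $b_2^+>1$ via Lisca--Mati\'c or Akbulut--Ozbagci, then apply the generalized Seiberg--Witten adjunction inequality to the image of $[\Sigma]$), noting that the closed-manifold inequality also covers the $g=0$ case. The only quibble is attribution: the homologically essential sphere case you highlight rests on Fintushel--Stern's immersed-sphere results rather than on Morgan--Szab\'o--Taubes alone, which is precisely the ``minor correction'' the paper records.
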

\begin{proof}For the completeness, we give a minor correction to the proof of \cite[Theorem~13.3.8]{OS1}. In the $g=0$ case, apply the same argument as the $g\geq 1$ case (Since \cite[Theorem~13.3.6]{OS1} also holds in the $g=0$ case (\cite{FS3}), one can apply.).
\end{proof}
We also use the following lemma, which is easily checked by Figure~\ref{fig2}. 
\begin{lemma}\label{sec:stein:lem:zigzag}Let $K$ be a Legendrian knot in $\#n (S^1\times S^2)$ $(n\geq 0)$. For any integer pair $(t,d)$ with $t\geq 1$ and $0\leq d\leq t$, by locally adding zig-zags to $K$ upward or downward, $K$ can be changed so that the following \textnormal{(i)} and \textnormal{(ii)} are satisfied.\smallskip 
\begin{itemize}
 \item [(i)] The Thurston-Bennequin number of $K$ decreases by $t$.\smallskip 
 \item [(ii)] The rotation number of $K$ increases by $2d-t$. 
\end{itemize}
\begin{figure}[ht!]
\begin{center}
\includegraphics[width=1.6in]{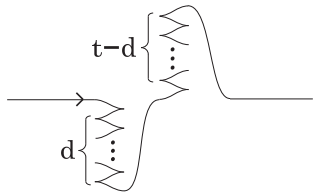}
\caption{adding zig-zags}
\label{fig2}
\end{center}
\end{figure}
\end{lemma}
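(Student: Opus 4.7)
The plan is to reduce the lemma to the standard formulas for how a single Legendrian stabilization (adding one zig-zag in the front projection) changes the invariants $tb$ and $r$. Recall that for a Legendrian knot $K$ in $\#n(S^1\times S^2)$ presented by a front diagram, the Thurston-Bennequin number is computed as the writhe minus half the total number of cusps, and the rotation number is half the difference between the number of down-cusps and up-cusps (with appropriate sign conventions). Consequently, inserting a single upward zig-zag locally creates two additional cusps (one up, one down) without changing the writhe, so $tb$ decreases by $1$ and $r$ increases by $1$; inserting a single downward zig-zag likewise decreases $tb$ by $1$ but decreases $r$ by $1$. This is precisely the content of Figure~\ref{fig2}, and is a standard local computation.

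Given this, the proof of the lemma becomes a one-line bookkeeping argument. First I would add exactly $d$ upward zig-zags and exactly $t-d$ downward zig-zags to $K$ somewhere on the front diagram (the hypothesis $0\leq d\leq t$ guarantees both counts are nonnegative). By the local computation above, the total change in the Thurston-Bennequin number is
\[
-(d)+(-(t-d)) \;=\; -t,
\]
while the total change in the rotation number is
\[
(+d)+(-(t-d))\;=\;2d-t,
\]
which are exactly conditions (i) and (ii). The operation is local (each zig-zag is inserted in a small arc of the front diagram), so nothing about the ambient manifold $\#n(S^1\times S^2)$ or the behavior of $K$ near the $1$-handle attaching balls interferes.

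The only real content is the verification of the per-zig-zag formulas for $tb$ and $r$, which is the expected main step. However, since these are well-known standard formulas for Legendrian stabilization in the front projection and appear in the references (\cite{G}, \cite{GS}, \cite{OS1}), there is no genuine obstacle; it suffices to cite the standard front-diagram computations of $tb$ and $r$ and note that a zig-zag is a Legendrian stabilization, with the sign of the change in $r$ determined by whether the zig-zag points upward or downward.
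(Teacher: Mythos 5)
Your proposal is correct and matches the paper's (essentially unstated) argument: the paper simply asserts the lemma is ``easily checked by Figure~\ref{fig2},'' i.e.\ by the standard per-stabilization formulas $\Delta tb=-1$, $\Delta r=\pm 1$, and your bookkeeping with $d$ upward and $t-d$ downward zig-zags is exactly the intended computation.
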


Compact Stein $4$-manifolds are known to admit useful embeddings. 
\begin{theorem}[Lisca-Mati\'{c}~\cite{LM1}, Akbulut-Ozbagci~\cite{AO3}]\label{sec:stein:th:closing of Stein}
Every compact Stein $4$-manifold can be embedded into a minimal closed complex surface of general type with $b_2^+>1$; and can be embedded into a simply connected, minimal, closed, symplectic $4$-manifold with $b_2^+>1$. Here minimal means that there is no smoothly embedded $2$-sphere with the self-intersection number $-1$. 
\end{theorem}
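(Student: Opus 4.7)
The plan is to reduce the embedding problem to a statement about Lefschetz fibrations, then close up the Lefschetz fibration carefully so that the resulting closed four-manifold has all the desired properties. The starting point is the theorem of Loi--Piergallini (proved independently by Akbulut--Ozbagci) which asserts that any compact Stein $4$-manifold $W$ admits the structure of a positive allowable Lefschetz fibration (PALF) over the disk $D^2$, whose regular fiber is a compact surface $F$ with non-empty boundary and whose vanishing cycles are homologically non-trivial curves on $F$. Thus the whole question becomes: given a PALF $W\to D^2$, embed it into a closed Lefschetz fibration $X\to S^2$ satisfying the hypotheses of the theorem.

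First I would stabilize $W$ if necessary, in order to assume the fiber genus is $\geq 2$; this can be done by a sequence of positive stabilizations of the induced open book on $\partial W$, which correspond to attaching $1$--handle/$2$--handle pairs to $W$ (keeping it Stein). Next I would construct a symplectic \emph{cap}: an auxiliary Lefschetz fibration $W'\to D^2$ with the same fiber $F$ over the complementary disk, whose monodromy along $\partial D^2$, when composed with the monodromy of $W$ at infinity, yields the identity in the mapping class group of $F$. The existence of such a $W'$ is guaranteed because every element of the mapping class group of a surface of genus $\geq 2$ with boundary can be written as a product of positive Dehn twists along non-separating curves; each such factor is realized by a Lefschetz critical point. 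Gluing $W$ and $W'$ along their boundary open books produces a closed Lefschetz fibration $X=W\cup W'\to S^2$ into which $W$ embeds.

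By a classical theorem of Gompf, any closed Lefschetz fibration of fiber genus $\geq 2$ admits a symplectic form for which the fibers are symplectic submanifolds. To arrange simple connectivity, observe that $\pi_1(X)$ is generated by loops in a regular fiber modulo the vanishing cycles on both sides; one freely introduces additional positive Dehn twists into the cap monodromy along curves that together with $\partial F$ normally generate $\pi_1(F)$, and each such twist contributes a vanishing cycle that kills one generator without disturbing the Stein piece $W$. Similarly, additional Lefschetz critical points in $W'$ raise $b_2^+$, so one can force $b_2^+(X)>1$. Finally, minimality is arranged via Usher's criterion (or directly by ensuring the fiber genus is large compared to the data): after possibly blowing down the finitely many exceptional spheres in the symplectic $4$-manifold $X$, which can only occur in $W'$ away from $W$, one obtains a minimal symplectic $4$-manifold still containing $W$. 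This gives the second assertion.

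For the general-type complex-surface statement, the main obstacle is upgrading from symplectic to complex, and further to general type. Here I would invoke Lisca--Mati\'c's original approach: enlarge the closed symplectic Lefschetz fibration $X\to S^2$ constructed above to a Lefschetz \emph{pencil} of sufficiently high degree (using Donaldson's theorem on the existence of Lefschetz pencils), which is then deformation equivalent to a projective algebraic surface $X'$ containing an embedded copy of $W$. By iterated fiber sum with minimal complex surfaces of general type, or by arranging the pencil so that the canonical class $K_{X'}$ satisfies $K_{X'}^2>0$ and $K_{X'}\cdot[\mathrm{fiber}]>0$, one forces $X'$ to lie in the general-type stratum of the Enriques--Kodaira classification; a final blow-down of exceptional curves disjoint from $W$ yields the required minimal complex surface of general type with $b_2^+>1$ containing $W$. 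The principal difficulty throughout is the simultaneous bookkeeping of simple connectivity, minimality, $b_2^+>1$, and (for the complex statement) integrability of the symplectic structure---none of these properties is individually hard, but they interact, and minimality in particular must be checked after the blow-ups implicitly introduced by extending the pencil.
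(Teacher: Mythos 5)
Before assessing the details: the paper does not actually reprove this theorem. It is quoted from Lisca--Mati\'{c} and Akbulut--Ozbagci, and the only thing the paper proves is the addendum ``simply connected,'' which it handles with a two-line handle argument --- first attach Stein $2$-handles to the given Stein $4$-manifold to kill its fundamental group, then observe that the Akbulut--Ozbagci closing procedure only attaches handles of index $\geq 2$, so simple connectivity survives. Your proposal instead reconstructs the whole theorem from scratch, and in doing so it leans on a false statement at the crucial step.

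The gap: you justify the existence of the cap $W'$ by asserting that every element of the mapping class group of a compact surface $F$ \emph{with boundary} of genus $\geq 2$ is a product of positive Dehn twists along non-separating curves. This is false. Products of positive Dehn twists are right-veering (Honda--Kazez--Mati\'{c}), and for instance the inverse of a boundary-parallel twist is not; equivalently, if the claim were true, every open book with page $F$ would support a Stein fillable (hence tight) contact structure, which is absurd. So the inverse of the monodromy of $W$ need not admit a positive factorization, and your closed fibration $W\cup W'$ need not exist as described. (A related problem: gluing two bounded-fiber Lefschetz fibrations over $D^2$ along their open-book boundaries does not literally produce a Lefschetz fibration over $S^2$, so Gompf's symplectic-structure theorem does not directly apply.) The correct route --- and what Akbulut--Ozbagci actually do --- is to first cap off $\partial F$ to a closed fiber $\hat F$ of genus $\geq 2$ and extend $W\to D^2$ accordingly; in the mapping class group of the \emph{closed} surface $\hat F$ every element is indeed a product of positive non-separating twists, so the monodromy can be completed to the identity and the fibration closed over $S^2$. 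Your minimality step is also unsupported: there is no reason an exceptional sphere of the closed manifold should avoid $W$ (the adjunction inequality only rules out essential $(-1)$-spheres \emph{inside} the Stein piece); one needs Usher/Stipsicz-type minimality results for Lefschetz fibrations, or the complex-geometric embedding of Lisca--Mati\'{c}. Since all of this is precisely the content of the cited papers, the efficient course is to cite them and prove only the simple-connectivity refinement, as the paper does.
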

\begin{proof}(simply connectedness) Since ``simply connected'' is not claimed in \cite{LM1} and \cite{AO3}, we explain this part for completeness. We follow the proof in~\cite{AO3}. We first attach 2-handles to a given Stein 4-manifold to make it simply connected Stein 4-manifold, then apply the procedure prescribed in~\cite{AO3}. Since this results attaching 2-, 3- and 4-handles to the boundary, the simply connectedness is preserved.
\end{proof}

A compact Stein $4$-manifold $X$ induces a contact structure $\xi$ on its boundary $\partial X$. If its Chern class $c_1(\xi)\in H^2(\partial X;\mathbf{Z})$ is a torsion, then the contact invariant $d_3(\xi)\in \mathbf{Q}$ (called the 3-dimentional invariant) is defined by 
\begin{equation*}
d_3(\xi)=\frac{1}{4}(c_1(X)^2-2e(X)-3\sigma(X)), 
\end{equation*}
where $e(X)$ and $\sigma(X)$ denotes the Euler characteristic and the signature of $X$, respectively. For a computation of $c_1(X)^2$, see \cite{GS} and \cite{OS1}. The lemma below is easily verified. 
\begin{lemma}[cf.\cite{GS}]\label{sec:stein:lem:contact}Let $X$ be a compact Stein $4$-manifold with $b_2(X)=1$. Denote the generator of the second homology group of $X$ by $v$. Suppose $v^2\neq 0$, then
\begin{equation*}
c_1(X)^2=\dfrac{{\langle c_1(X), v \rangle}^2}{v^2}.
\end{equation*}
\end{lemma}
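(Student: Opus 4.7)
The plan is to translate the computation of $c_{1}(X)^{2}$ into a one-variable linear-algebra exercise on $H_{2}(X;\mathbf{Q})=\mathbf{Q}\langle v\rangle$, using Poincar\'e--Lefschetz duality. Recall that for a compact $4$-manifold with boundary, $c_{1}(X)^{2}$ is defined (see \cite{GS}, \cite{OS1}) by choosing a lift $\bar c_{1}\in H^{2}(X,\partial X;\mathbf{Q})$ of $c_{1}(X)$ under $j^{\ast}\colon H^{2}(X,\partial X;\mathbf{Q})\to H^{2}(X;\mathbf{Q})$ and setting $c_{1}(X)^{2}=\langle\bar c_{1}\smile c_{1}(X),[X,\partial X]\rangle$. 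The very first thing I would verify is that such a rational lift exists. From the long exact sequence of the pair, the image of $j^{\ast}$ is precisely the image of the intersection form viewed as a map $H_{2}(X;\mathbf{Q})\to H^{2}(X;\mathbf{Q})$; since $b_{2}(X)=1$ and $v^{2}\neq 0$ by hypothesis, this map is an isomorphism and $j^{\ast}$ is surjective over $\mathbf{Q}$, so a lift $\bar c_{1}$ exists.

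Next I would carry out the computation by introducing the Poincar\'e--Lefschetz dual $\hat c_{1}:=\mathrm{PD}(\bar c_{1})\in H_{2}(X;\mathbf{Q})$, so that $\hat c_{1}=\alpha v$ for some $\alpha\in\mathbf{Q}$. The cup-product pairing $H^{2}(X,\partial X)\otimes H^{2}(X)\to H^{4}(X,\partial X)$ corresponds under PLD to the intersection pairing $H_{2}(X)\otimes H_{2}(X,\partial X)\to\mathbf{Z}$, and this together with the standard fact $\mathrm{PD}(c_{1})=j_{\ast}(\hat c_{1})$ yields
\begin{equation*}
c_{1}(X)^{2}=\hat c_{1}\cdot j_{\ast}(\hat c_{1})=Q_{X}(\hat c_{1},\hat c_{1})=\alpha^{2}v^{2}.
\end{equation*}
To pin down $\alpha$, I would compute the pairing of $c_{1}(X)$ with $v$: by naturality of the Kronecker pairing,
\begin{equation*}
\langle c_{1}(X),v\rangle=\langle j^{\ast}\bar c_{1},v\rangle=\langle\bar c_{1},j_{\ast}v\rangle,
\end{equation*}
and once again translating the right-hand side through PLD gives $\hat c_{1}\cdot j_{\ast}v=Q_{X}(\hat c_{1},v)=\alpha v^{2}$. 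Hence $\alpha=\langle c_{1}(X),v\rangle/v^{2}$, and substituting back produces exactly
\begin{equation*}
c_{1}(X)^{2}=\alpha^{2}v^{2}=\frac{\langle c_{1}(X),v\rangle^{2}}{v^{2}},
\end{equation*}
which is the claim.

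The only genuine obstacle is the bookkeeping in the previous paragraph: one must track that $j^{\ast}$, the cup product, and Poincar\'e--Lefschetz duality all fit together so that $c_{1}(X)^{2}$ agrees with $Q_{X}(\hat c_{1},\hat c_{1})$ and not with some off-by-a-lift ambiguity. Independence of the chosen lift $\bar c_{1}$ follows because two lifts differ by a class in the image of $H^{1}(\partial X;\mathbf{Q})\to H^{2}(X,\partial X;\mathbf{Q})$, which pairs trivially with the absolute class $c_{1}(X)$. Once this is in place the rank-one hypothesis $b_{2}(X)=1$ collapses the argument to the scalar relation above.
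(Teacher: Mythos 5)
Your argument is correct: the existence of the rational lift $\bar c_1$ follows exactly as you say from surjectivity of $j^{\ast}$ over $\mathbf{Q}$ (which also shows $c_1(\xi)$ is automatically torsion under the hypotheses $b_2(X)=1$, $v^2\neq 0$), and the duality bookkeeping giving $c_1(X)^2=\alpha^2v^2$ with $\alpha=\langle c_1(X),v\rangle/v^2$ is sound, as is the lift-independence check. The paper offers no proof of this lemma --- it simply declares it ``easily verified'' with a pointer to \cite{GS} --- and your computation is precisely the standard Poincar\'e--Lefschetz duality verification that citation is meant to supply.
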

\section{$W^+(p)$- and $W^-(p)$-modifications}\label{sec:modification}
In this section, we study the effects of the operations below. We first define them for smooth 2-handlebodies and later redefine them for Legendrian handlebodies. In this paper, the words the ``attaching circle of a $2$-handle'' and a ``smoothly embedded surface'' are often abbreviated to  a ``$2$-handle'' and a ``surface'', if they are clear from the context. 
\begin{definition}Assume $p\geq 1$. Let $K$ be a 2-handle of a (smooth) 2-handlebody. Take a small segment of the attaching circle of $K$ as in the first row of Figure~\ref{fig3}.

We call the local operations shown in the left and the right side of Figure~\ref{fig3} a \textit{$W_1^+(p)$-modification} to $K$ and a \textit{$W_1^-(p)$-modification} to $K$, respectively. Here we do not change the framing of $K$ (ignore the orientations shown in the figure). They are clearly related by a cork twist along $(W_1,f_1)$ as shown in the figure.
\begin{figure}[ht!]
\begin{center}
\includegraphics[width=4.5in]{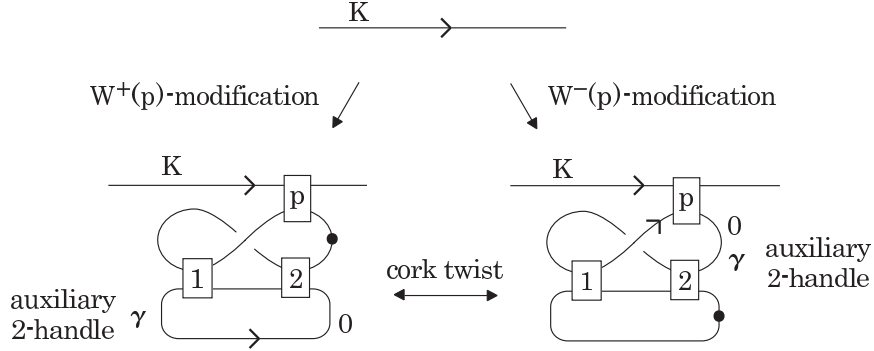}
\caption{$W_1^{\pm}(p)$-modifications $(p\geq 1)$  (the framing of $K$ is unchanged)}
\label{fig3}
\end{center}
\end{figure}

 We will call the $0$-framed $2$-handle $\gamma$ on the left (or  right) side of the Figure~\ref{fig3} the \textit{auxiliary $2$-handle} of the $W_1^{\pm}(p)$-modification of $K$. We will use the same symbol $K$ for the new $2$-handle obtained from the original $K$ of $X$ by the modification.

For convenience, we refer the $W_1^+(0)$- and $W_1^-(0)$-modifications as undone operations. 
For brevity, sometimes we will call these operations  $W_1^+$- and $W_1^-$-modifications when we do not need to specify the coefficients, or call them as $W_1$-modifications when we do not need to specify both the coefficient and $\pm$. Clearly the name of this operation comes from the $W_1$ cork of \cite{AY1}. Similarly we can talk about  \textit{$W^{\pm}(p)$-modification} for any cork $(W,f)$ coming from a symmetric link. 
\end{definition}

For the rest of this paper we will discuss the effects of $W$-modification where $(W,f)=(W_1,f_1)$. In the rest of this section, we assume $p\geq 1$.

\begin{proposition}\label{prop:relation of attachments} Let $K$ be a $2$-handle of a $2$-handlebody $X$. Any $W$-modification to $K$ do not change the isomorphism classes of the fundamental group, the integral homology groups, the integral homology groups of the bounadry $\partial X$, and the intersection form of $X$. 
\end{proposition}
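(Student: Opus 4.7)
The plan is to compute the effect of a single $W_1^{\pm}(p)$-modification directly on the handle decomposition, reading off the local picture of Figure~\ref{fig3}. The modification adds exactly one $1$-handle (the new dotted circle) together with one auxiliary $0$-framed $2$-handle $\gamma$, while replacing a short arc of the attaching circle of $K$ by a local tangle; no other handles of $X$ are affected. Call the resulting $2$-handlebody $X'$.

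For the fundamental group, let $x$ denote the generator coming from the new $1$-handle in a handle presentation of $\pi_1(X')$. Inspection of the figure shows that $\gamma$ links the new dotted circle algebraically once, so as a $2$-handle it imposes the relation $x=1$. The attaching circle of the modified $K$ differs from the original only by excursions that cross the new $1$-handle with total algebraic intersection zero, so its word in $\pi_1$ agrees with the original one modulo powers of $x$; after imposing $x=1$ we recover precisely the old presentation, and therefore $\pi_1(X')\cong \pi_1(X)$. For the integral homology, the cellular chain complex of $X'$ is obtained from that of $X$ by adjoining a new generator $e_1\in C_1$ and a new generator $e_2\in C_2$ with $\partial e_2 = e_1$; this acyclic summand contributes nothing to homology, so $H_{\ast}(X')\cong H_{\ast}(X)$.

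For the intersection form, note that $\gamma$ has nontrivial boundary in $C_1$, hence is not a $2$-cycle and does not appear in $H_2(X')$; the basis of $H_2(X')$ arising from the $2$-handles whose attaching circles are null-homologous in the $1$-skeleton is in bijection with the corresponding basis of $H_2(X)$. Since the framing of $K$ is preserved by hypothesis and the modification is strictly local, the self-intersection of $K$ and its linking numbers with all other $2$-handles are unchanged, so the intersection matrix in these bases is identical. For $H_{\ast}(\partial X)$, I would then invoke Poincar\'e--Lefschetz duality $H_k(X',\partial X')\cong H^{4-k}(X')$ together with the universal coefficient theorem and the long exact sequence of the pair $(X',\partial X')$; combined with the invariance of $H_{\ast}(X')$ and of the intersection pairing just established, this forces $H_{\ast}(\partial X')\cong H_{\ast}(\partial X)$.

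The most delicate point, and the one I would verify most carefully from Figure~\ref{fig3}, is the pair of algebraic claims above: that $\gamma$ links the new dotted circle exactly once (yielding the single relation $x=1$) and that the modified attaching circle of $K$ meets the core of the new $1$-handle with algebraic intersection zero (so that no new $\pi_1$ content is inserted and the class of $K$ in $H_1$ of the $1$-skeleton is unchanged). Both properties follow from the symmetric structure of the link defining the cork $W_1$; the parameter $p$ only controls how many times $K$ threads through the clasp, and affects neither algebraic invariant.
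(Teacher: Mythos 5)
Your treatment of $\pi_1$, $H_*$, and the intersection form is essentially the paper's: the auxiliary $2$-handle $\gamma$ links the new dotted circle algebraically once, so the new $1$-/$2$-handle pair cancels algebraically and the modification is local, leaving these invariants untouched. That part is fine.

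The gap is in the boundary step. You conclude that Poincar\'e--Lefschetz duality, the universal coefficient theorem, and the long exact sequence of $(X',\partial X')$, \emph{combined only with} the invariance of $H_*(X')$ and of the intersection form, ``force'' $H_*(\partial X')\cong H_*(\partial X)$. This deduction is not valid for a general $2$-handlebody: when $X$ has $1$-handles, the connecting data in the long exact sequence is the map $H_2(X)\to H_2(X,\partial X)\cong H^2(X)\cong \operatorname{Hom}(H_2(X),\mathbf{Z})\oplus\operatorname{Ext}(H_1(X),\mathbf{Z})$, and only its $\operatorname{Hom}$-component is the intersection form; the $\operatorname{Ext}$-component, and more generally the full linking/attaching presentation matrix of $\partial X$, is extra data. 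Concretely, two $2$-handlebodies each with one $1$-handle and one $2$-handle running over it twice, with framings $0$ and $1$ respectively, have identical $H_*$ and identical (zero) intersection forms, but boundary first homology $\mathbf{Z}/2\oplus\mathbf{Z}/2$ versus $\mathbf{Z}/4$. So the abstract invariants you propagate do not determine $H_*(\partial X)$. To close the gap you would either have to track the full presentation matrix of $H_1(\partial X)$ through the modification (showing the new rows and columns for the added $1$-handle and $\gamma$ can be cleared by elementary operations using the single algebraic intersection point), or use the paper's shortcut: first replace every dot by a $0$ (surgering $S^1\times D^3$ to $D^2\times S^2$), which does not change the boundary but makes the handlebody simply connected; for a \emph{simply connected} $2$-handlebody the boundary homology \emph{is} determined by the intersection form, the $W$-modification commutes with this surgery, and it preserves the (enlarged) intersection form of the simply connected model, which yields the claim about $\partial X$.
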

\begin{proof}
Since the $0$-framed auxiliary 2-handle links with the 1-handle algebraically once, each operation does not change the fundamental group, the integral homology groups, and the intersection form. We next check the boundary. Recall that the integral homology groups of the bounadry of any simply connected $2$-handlebody are determined by its intersection form (cf.~\cite{GS}). So we first replace the dots of the dotted circles of $X$ with $0$'s, that is, surgery $S^1\times D^3$'s to $D^2\times S^2$'s. We now have a simply connected 2-handlebody. Next apply the $W$-modification to $K$. This modification keeps the intersection form and the simply connectedness. Moreover, the boundary of this result is diffeomorphic to the boundary of the result of the $W$-modification to $K$ of $X$. Therefore any $W$-modification do not affect the homology groups of the boundary $\partial X$. 
\end{proof}
\begin{proposition}\label{sec:attachments:prop:genus of attachments}
Apply a $W^+(p)$-modification to a $2$-handle $K$ of a $2$-handlebody $X$. Let $X^+$ and $\gamma$ denote the result of $X$ and the auxiliary $2$-handle, respectively. Suppose that the attaching circle of the original $K$ of $X$ spans a smoothly embedded genus $g$ surface in a sub 1-handlebody $\natural_n (S^1\times D^3)$ $(n\geq 0)$ of $X$. Then the new $K$ of $X^+$ spans a smoothly embedded genus $g+p$ surface in a sub 1-handlebody of $X^+$ after sliding over the $2$-handle $\gamma$ $p$-times $($homologically, this changes $K$ to $K-p\gamma$$)$.
\end{proposition}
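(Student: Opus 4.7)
The plan is to construct the desired genus $g+p$ surface by locally modifying the original genus $g$ surface inside the small ball where the $W^+(p)$-modification was performed, and to verify that it lies in the new sub 1-handlebody after the prescribed handle slides.

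First I would fix notation for the local picture. Let $B$ be a small 4-ball inside $X$ containing the arc of $K$ to which the $W^+(p)$-modification is applied. The modification adjoins inside $B$ a new dotted 1-handle $h$ together with the $0$-framed auxiliary 2-handle $\gamma$ arranged as the standard $W_1$ subdiagram, and replaces the arc of $K$ by a tangle linking $h$ and $\gamma$ according to the parameter $p$. Outside $B$, $K$ and the ambient handle structure are unchanged, and the original genus $g$ surface $\Sigma\subset\natural_n(S^1\times D^3)$ can be taken to be disjoint from $B$ (after a small isotopy).

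Next I would perform the $p$ handle slides of $K$ over $\gamma$. Since $\gamma$ has framing $0$, the framing of $K$ is preserved and the homology class changes to $K-p\gamma$. The key geometric content, read off from Figure~\ref{fig3}, is that each slide has the effect (after isotopy) of converting one of the local crossings of $K$ introduced by the $W^+(p)$-modification into an honest strand passing through the new 1-handle $h$; after all $p$ slides, the attaching circle of the slid $K$ runs through $h$ exactly $p$ times (algebraically and, up to isotopy in the new boundary $\#_{n+1}(S^1\times S^2)$, geometrically).

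Then I would construct the new Seifert surface $\Sigma'\subset\natural_{n+1}(S^1\times D^3)$ bounded by the slid $K$. Outside $B$, take $\Sigma'=\Sigma$. Inside $B$, cap off the local picture by first choosing a disk for the portion of $K$ lying in $B$ (this is possible by Proposition~\ref{prop:relation of attachments}, since the local tangle is unknotted) and then attaching $p$ tubes, one along each passage of $K$ through the core of the new 1-handle $h$. Each such tube contributes exactly one handle, so
\[
\mathrm{genus}(\Sigma')=\mathrm{genus}(\Sigma)+p=g+p,
\]
and $\Sigma'$ is embedded and visibly contained in the sub 1-handlebody $\natural_{n+1}(S^1\times D^3)$ of $X^+$.

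The main obstacle is the combinatorial bookkeeping in the second step: one has to read off from the local diagram of the $W^+(p)$-modification that exactly $p$ slides over $\gamma$ produce exactly $p$ honest passages through $h$, no more and no fewer, and that the slid $K$ can be realized by an isotopy in $\partial\natural_{n+1}(S^1\times D^3)$ (so that $\gamma$ itself is not used in building $\Sigma'$). Once this local count is verified from Figure~\ref{fig3}, the rest of the argument is a straightforward tubing construction.
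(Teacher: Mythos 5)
Your overall strategy --- localize the problem to the ball where the modification happens, keep the old surface outside, and build the extra genus inside --- is the same as the paper's, which writes the new $K$ as a band sum of the old $K$ with a local knot $U$ and then shows that $U$ (after the $p$ slides over $\gamma$) bounds a genus $p$ surface in the local piece. But the entire mathematical content of the proposition lives in that local computation, and that is precisely the step you defer to ``reading off from Figure~\ref{fig3}''; moreover, the specific counts you commit to there are not consistent. You assert that after the $p$ slides the attaching circle of $K$ runs over the new dotted circle $h$ \emph{algebraically} $p$ times. If that were so, the slid curve would be nonzero in $H_1$ of the sub 1-handlebody $\natural_{n+1}(S^1\times D^3)$ and could not bound \emph{any} embedded surface there, contradicting the conclusion you are trying to prove. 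The correct bookkeeping is the opposite: the un-slid new $K$ goes over $h$ algebraically $p$ times (which is exactly why one must slide over $\gamma$, which goes over $h$ algebraically once, $p$ times to reach the class $K-p\gamma$ that goes over no 1-handle algebraically), and after the slides the curve meets $h$ geometrically in canceling pairs. Relatedly, ``one tube per passage through $h$'' is not a well-formed construction --- a tube joins \emph{two} passages (of opposite sign), so the genus count comes from pairing, not from single passages. The citation of Proposition~\ref{prop:relation of attachments} to produce the local disk is also not apposite: that proposition is a homological statement and says nothing about the local tangle bounding a disk.

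For comparison, the paper carries out the local step by explicit Kirby calculus (Figure~\ref{fig4}): introduce a canceling 1-/2-handle pair, slide $\gamma$ geometrically twice, isotope, then slide the local knot $U$ over a $0$-framed unknot $p$ times so that it no longer links the lower dotted circle, at which point $U$ is visibly the boundary of a disk with $2p$ bands attached, hence bounds an embedded orientable surface of genus $p$ in the sub 1-handlebody (cf.\ \cite[Exercise 4.5.12(b)]{GS}). Some such concrete diagrammatic verification is unavoidable here; until you supply it (with the geometric intersection with $h$ equal to $2p$ and algebraic intersection equal to $0$), your argument establishes neither that the slid curve bounds in the sub 1-handlebody nor that the genus it realizes is $g+p$.
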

\begin{proof}The new $K$ is obtained by a band summing  the original $K$ and the knot $U$ in the first picture of Figure~\ref{fig4}. Hence it suffices to check that $U$ spans a smoothly embedded  surface of genus $p$ after sliding over the $2$-handle $\gamma$ $p$-times. 
Introduce a canceling 1- and 2-handle pair and slide $\gamma$ (geometrically) twice, 
then we get the second picture. Isotopy gives the third picture. We then slide the
knot $U$ over the 0-framed unknot $p$-times so that $U$ does not link with the lower
dotted circle. We get the fourth picture, by ignoring two 2-handles, and isotopy. We
can now easily see that $U$ bounds a  surface of genus $p$ by the standard argument (cf.\
\cite[Exercise.4.5.12.(b)]{GS}). One can check that $U$ is the boundary of $D^2$ with $2p$ bands attached. Note that in the beginning, we slided $\gamma$ over the $-1$ framed 2-handle, which does not affect the result because the sliding was over the canceling 2-handle algebraically zero times. 
%
%
\end{proof}
\begin{figure}[ht!]
\begin{center}
\includegraphics[width=4.0in]{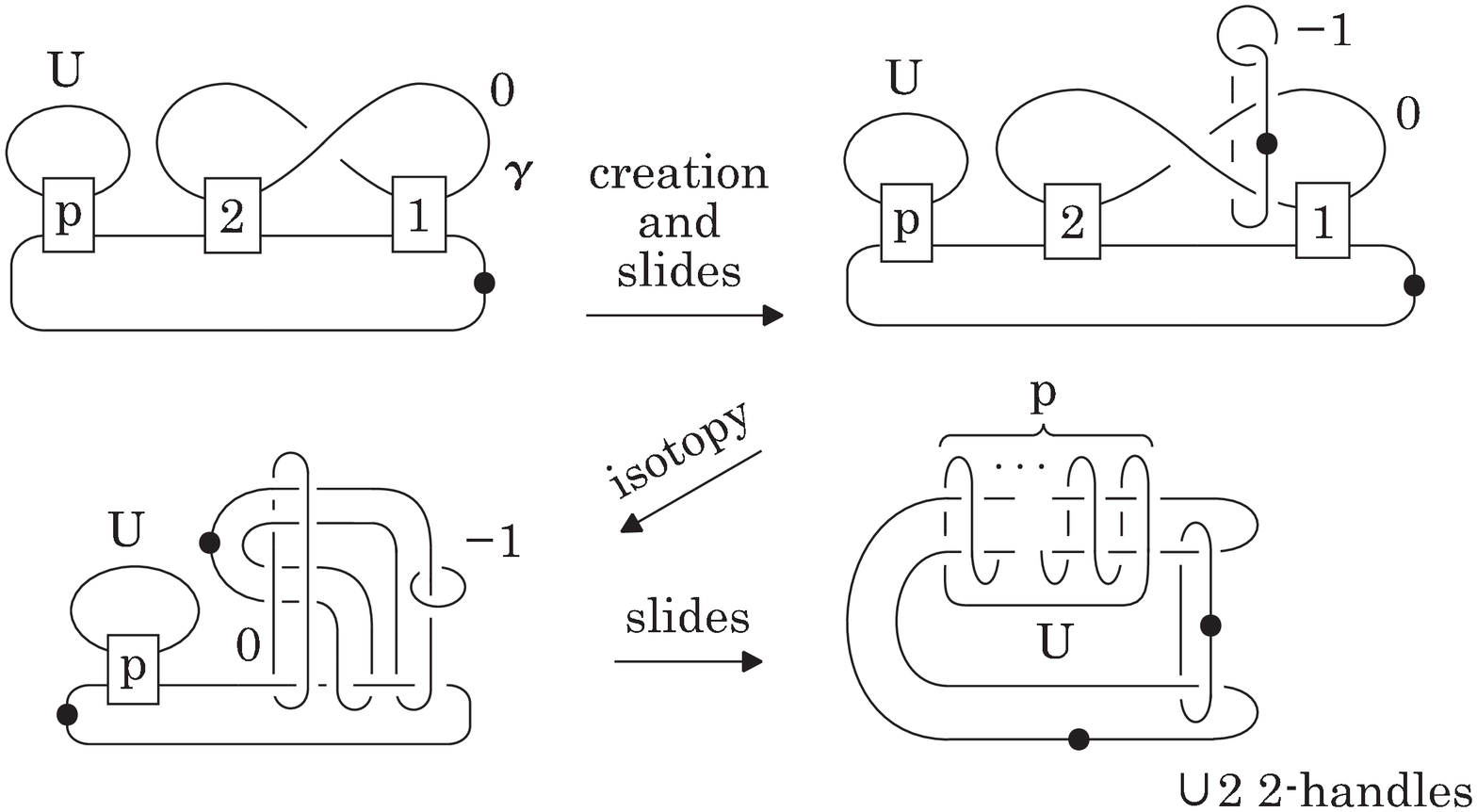}
\caption{}
\label{fig4}
\end{center}
\end{figure}


\begin{lemma}\label{sec:attachment:lem:self-diffeo}$(1)$ The $4$-manifolds $S_1$ and $S_2$ in Figure~\ref{fig5} are diffeomorphic to $S^2\times D^2$.\smallskip \\
$(2)$ Let $\widehat{f}: \partial S_1\to \partial S_2$ be the diffeomorphism induced by the obvious cork twist of $S_1$ $($i.e. exchanging the dot and $0)$. Note that the cork twist does not change the boundary of $S_1$. Then $\widehat{f}$ extends to a diffeomorphism between $S_1$ and $S_2$. 

\begin{figure}[ht!]
\begin{center}
\includegraphics[width=3.4in]{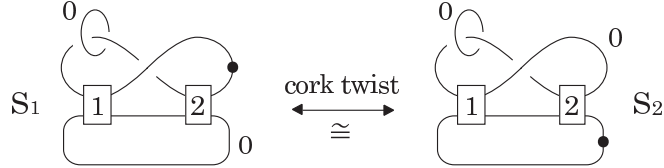}
\caption{two diagrams of $S^2\times D^2$}
\label{fig5}
\end{center}
\end{figure}
\end{lemma}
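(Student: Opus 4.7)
The plan is a two-stage Kirby-calculus argument. For part (1), I would simplify each diagram to the standard picture of $S^2\times D^2$ (a single $0$-framed unknotted $2$-handle) by explicit handle moves. Concretely, in each of $S_1$ and $S_2$ I expect to be able to slide one of the $2$-handles over the other so that it unclasps from the dotted circle, leaving a canceling $1$-/$2$-handle pair; after the cancellation, the remaining $2$-handle is an unknot with framing $0$, and we read off $S^2\times D^2$. The two diagrams are set up so that this simplification is essentially the same on both sides, only with the roles of the dot and the $0$ swapped — which is what makes part (2) plausible in the first place.

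For part (2), the key is to track the cork-twisting involution through the handle moves of part (1). The strategy is to choose the simplification of $S_1$ and the simplification of $S_2$ so that they are exchanged by the dot-for-$0$ substitution at every intermediate step, and both end at the same standard diagram of $S^2\times D^2$. Composing the simplification of $S_1$ with the inverse of the simplification of $S_2$ then produces a diffeomorphism $\Phi\colon S_1\to S_2$. It remains to check that $\Phi|_{\partial S_1}=\widehat{f}$. This follows because each elementary handle move has a natural mirror image under the dot-for-$0$ exchange that agrees with it on the boundary, so the composite restricts on $\partial S_1$ to precisely the involution that swaps the dot and the $0$.

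The main obstacle is the bookkeeping in part (2): one must be careful that the diffeomorphism constructed via handle calculus restricts on the boundary to $\widehat{f}$ itself, rather than to $\widehat{f}$ composed with a nontrivial self-diffeomorphism of $S^2\times S^1$ coming from $\pi_1 \mathrm{SO}(3)$ or from the reflection. A safe way to rule this out is to verify that $\Phi$ preserves the obvious meridian of the $0$-handle complement and sends the meridian of the $1$-handle of $S_1$ to the meridian of the $1$-handle of $S_2$, so that on homology of the boundary it acts by the same permutation as $\widehat{f}$; since any self-diffeomorphism of $S^2\times D^2$ extending the identity of the boundary is isotopic to the identity, any remaining discrepancy can be absorbed into an isotopy of $\Phi$. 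If this direct approach turns out to be awkward, a cleaner fallback is to exhibit a single explicit isotopy in $\partial S_1=\partial S_2$ that realizes $\widehat{f}$ as a diffeomorphism obtained by a Kirby move extending over the interior — effectively realizing the cork twist of $S_1$ as an interior handle slide, which is exactly the phenomenon we want to record.
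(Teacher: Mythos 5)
Your part (1) follows the same route as the paper: reduce each diagram by handle slides and a $1$-/$2$-handle cancellation to the standard picture of $S^2\times D^2$ (in the paper, $S_1$ cancels immediately, while $S_2$ first needs the middle $2$-handle slid over its meridian so that it links the dotted circle geometrically once). That part is fine. Part (2) is where there is a genuine gap, and it sits exactly at the step you flag as ``the main obstacle.'' The hoped-for symmetry --- that the simplifications of $S_1$ and $S_2$ can be chosen to be exchanged by the dot-for-$0$ substitution ``at every intermediate step'' --- is not available: exchanging a dot for a $0$ changes the manifold, there is no canonical correspondence between handle moves before and after the exchange, and in fact the two simplifications are not parallel (one is a direct cancellation, the other requires an extra slide). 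So part (1) only hands you \emph{some} diffeomorphism $\Phi\colon S_1\to S_2$, and the entire content of the lemma is whether $\widehat f\circ(\Phi|_{\partial S_1})^{-1}$ is isotopic to a self-diffeomorphism of $S^2\times S^1$ that extends over $S^2\times D^2$. Your ``safe way'' of checking the action on the homology of the boundary cannot settle this: $\pi_0\,\mathrm{Diff}(S^2\times S^1)\cong(\mathbf{Z}/2)^3$ by Gluck, and the Gluck-twist generator acts trivially on homology and homotopy yet does not extend over $S^2\times D^2$ (an extension would make the generator of $\pi_1(SO(3))$ nullhomotopic in $\mathrm{Diff}(S^2)\simeq O(3)$). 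Your closing remark --- that any remaining discrepancy ``can be absorbed into an isotopy of $\Phi$'' because a self-diffeomorphism of $S^2\times D^2$ restricting to the identity on the boundary is isotopic to the identity --- begs the question: the issue is whether the discrepancy extends over $S^2\times D^2$ at all, not whether such an extension is unique.

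The paper closes exactly this gap by a softer, non-diagrammatic argument: since $f_1$ extends to a self-\emph{homeomorphism} of the contractible cork $W_1$ (Freedman), $\widehat f$ extends to a homeomorphism $S_1\to S_2$; combining this with part (1) and Gluck's theorem on the diffeotopy group of $S^2\times S^1$ and on which classes extend over $S^2\times D^2$ (Sections 5 and 15 of Gluck's paper, cf.\ Ding--Geiges) then upgrades the topological extension to a smooth one. Some input of this kind is unavoidable: the purely diagrammatic bookkeeping you propose does not by itself pin down the isotopy class of $\Phi|_{\partial S_1}$ relative to $\widehat f$, and the Gluck-twist ambiguity is invisible to homology.
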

\begin{proof}$(1)$.\ The left side of the figure is checked by canceling the 1- and 2-handle pair. The right side is as follows. Slide the middle 2-handle over its meridian as in the second picture of Figure~\ref{fig6}. Note that the middle 2-handle now links with the dotted circle geometrically once. Canceling this 1- and 2-handle pair gives the last picture of the figure. 
\begin{figure}[ht!]
\begin{center}
\includegraphics[width=3.7in]{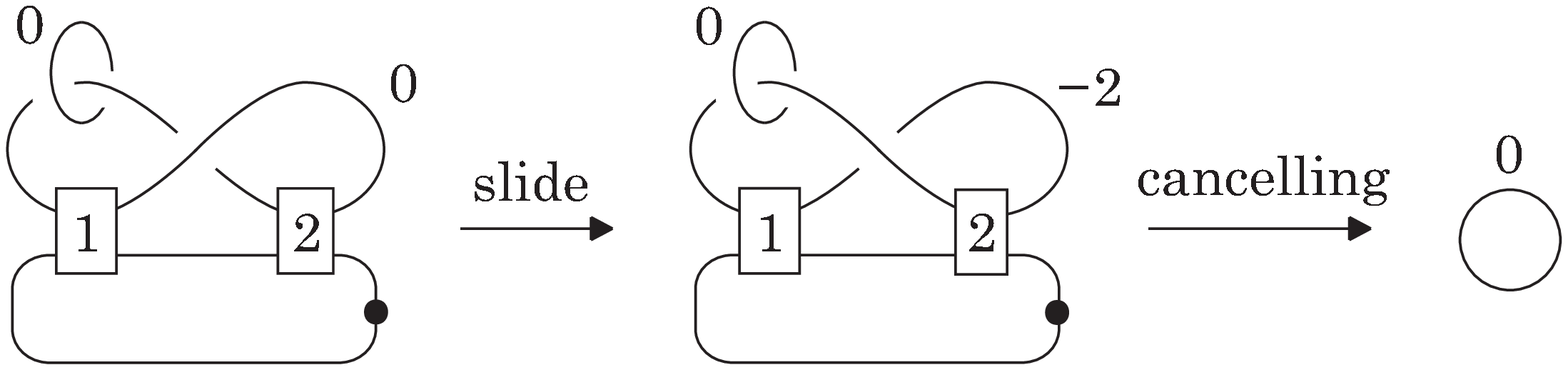}
\caption{}
\label{fig6}
\end{center}
\end{figure}

$(2)$.\ Since $f$ extends to a self-homeomorphism of $W$, $\widehat{f}$ extends to a homeomorphism between $S_1$ and $S_2$. Thus by $(1)$ and Gluck's theorem (Sections~5 and 15 of \cite{Gl2}, cf.\ \cite{DG}), $\widehat{f}$ extends to a diffeomorphism between $S_1$ and $S_2$.
\end{proof}

\begin{proposition}\label{sec:attachment:prop:embed}Let $K$ be a $2$-handle of a $2$-handlebody $X$. Let $Z$ be any compact connected oriented smooth $4$-manifold which  contains $X$ as a smooth submanifold. \smallskip \\
$(1)$ Let $X^+$ be the result of $X$ by a $W^+(p)$-modification to $K$. Then the following properties hold. 
\begin{itemize}
 \item [(i)] $X^+$ becomes diffeomorphic to $X$ after attaching a $2$- and a $3$-handle to $\partial X^+$ as in Figure~\ref{fig7}. Hence, $X^+$ can be embedded into $X$ and also $Z$.\smallskip
 \item [(ii)] The fundamental group, the integral homology groups, the integral homology groups of the boundary, and the intersection form of $Z-\textnormal{int}\,X^+$ are isomorphic to those of $Z-\textnormal{int}\,X$. Here we see $X^+$ as a submanifold of $Z$, through the embedding in \textnormal{(i)}. \smallskip 
\end{itemize}
$(2)$ Let $X^-$ be the result of $X^+$ by replacing the above $W^+(p)$-modification with the $W^-(p)$-modification as in the second row of Figure~\ref{fig8}. Then the following properties hold. 
\begin{itemize}
 \item [(i)] The cork twist of $Z$ along $(W, f)$ is diffeomorphic to $Z$ $($see Figure~\ref{fig8}$)$. Here this $W$ is the cork in $X^+(\subset Z)$ created from the $W^+(p)$-modification, and we view $X^+$ as a submanifold of $Z$ coming from the embedding in $(1)$\textnormal{(i)} above. Hence, $X^-$ can be embedded into $X$, and also into $Z$.\smallskip 
 \item [(ii)] $Z-\textnormal{int}\,X^-$ is diffeomorphic to $Z-\textnormal{int}\,X^+$. Here we see $X^+$ and $X^-$ as submanifolds of $Z$, via the embeddings in $(1)$.\textnormal{(i)} and $(2)$.\textnormal{(i)}, respectively.\smallskip
 \item [(iii)] $X$ can be embedded into $X^-$ so that the induced homomorphism $H_*(X;\mathbf{Z})\to H_*(X^-;\mathbf{Z})$ is an isomorphism.\smallskip 
\end{itemize}
$(3)$ There exist homeomorphisms between the pairs $(Z, X^+)$ and $(Z, X^-)$, and also between the pairs $(Z, Z-\textnormal{int}\,X^+)$ and $(Z, Z-\textnormal{int}\,X^-)$. 
\begin{figure}[ht!]
\begin{center}
\includegraphics[width=4.7in]{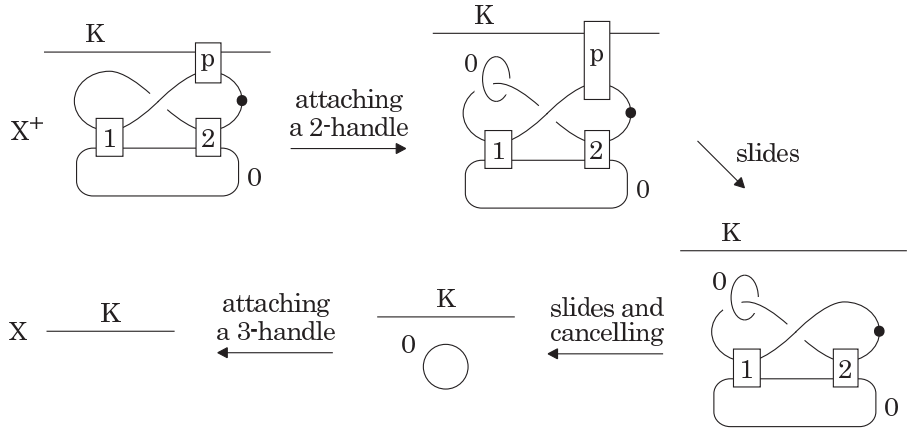}
\caption{Attaching a 2- and a 3-handle to $X^+$}
\label{fig7}
\end{center}
\end{figure}
\begin{figure}[ht!]
\begin{center}
\includegraphics[width=4.1in]{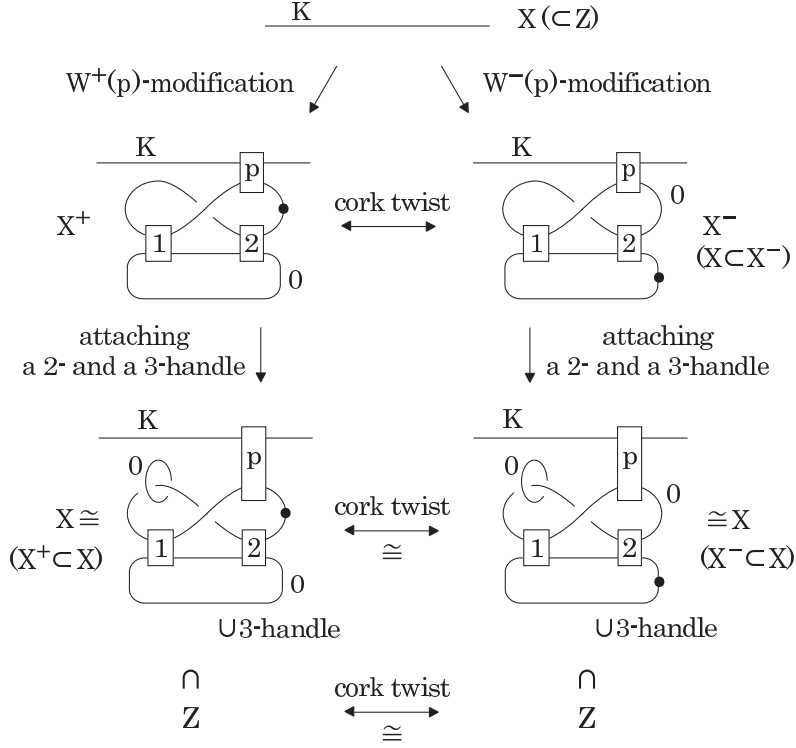}
\caption{relations}
\label{fig8}
\end{center}
\end{figure}
\end{proposition}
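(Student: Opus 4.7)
The overall plan is to establish part (1) by explicit Kirby calculus in Figure~\ref{fig7}, then deduce part (2) from Lemma~\ref{sec:attachment:lem:self-diffeo} together with the observation that the $W^+(p)$- and $W^-(p)$-modifications to $K$ differ exactly by a cork twist along $(W,f)$, and finally obtain part (3) from Freedman's theorem applied to the contractible cork.

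For (1)(i), I would attach the $2$- and $3$-handle of Figure~\ref{fig7} to $X^+$ and verify by handle slides that the result is diffeomorphic to $X$: the new $2$-handle slides to cancel one strand of the $W^+(p)$ piece, collapsing the cork $W$ to a single $1$-handle which the new $3$-handle then removes, and a final $1$-$2$ cancellation kills the auxiliary handle $\gamma$ and straightens the strands of $K$ back to the original attaching link of $X$. This realises $X$ as a handle attachment to $X^+$, giving embeddings $X^+\hookrightarrow X\hookrightarrow Z$. For (1)(ii), I turn the cobordism $X\smallsetminus\textnormal{int}\,X^+$ upside down: it is built from $\partial X^+$ by a $1$-handle and a $2$-handle, and Figure~\ref{fig7} shows that the $2$-handle runs geometrically once over the dual $1$-handle. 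Attaching this pair to $Z\smallsetminus\textnormal{int}\,X$ therefore preserves $\pi_1$, $H_*$ and the intersection form, and the invariance of $H_*(\partial)$ follows by the same Mayer--Vietoris/excision argument as in the proof of Proposition~\ref{prop:relation of attachments}.

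For (2)(i), I observe that the cork $W$ together with its auxiliary $2$-handle $\gamma$ has a regular neighbourhood in $Z$ whose handle diagram is precisely $S_1$ of Figure~\ref{fig5}, i.e.\ an embedded $S^2\times D^2$. Cork-twisting $Z$ along $(W,f)$ replaces this copy of $S_1$ by $S_2$ reglued along the boundary via $\widehat{f}$; Lemma~\ref{sec:attachment:lem:self-diffeo}(2) supplies an extension of $\widehat{f}$ to a diffeomorphism $S_1\cong S_2$, and extending by the identity outside yields a global diffeomorphism $Z_\tau\cong Z$ under which the copy of $X^-$ sitting in $Z_\tau$ is carried to a copy of $X^-$ in $Z$. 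Part (2)(ii) is then immediate by restricting this same diffeomorphism to the complements. For (2)(iii) I would run the analogue of the calculation in (1)(i) on $X^-$: attaching a suitable $2$- and $3$-handle to $X^-$ recovers $X$, and dualising this cobordism expresses $X^-$ as $X$ with a contractible collection of one $1$-handle and one $2$-handle attached, so the inclusion $X\hookrightarrow X^-$ is a homology isomorphism, cf.\ Proposition~\ref{prop:relation of attachments}.

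For (3), the cork-twist operation along $(W,f)$ extends by Freedman's theorem to a self-homeomorphism of $Z$ (since $W$ is contractible), and by construction it carries $X^+$ set-theoretically to $X^-$ and $Z\smallsetminus\textnormal{int}\,X^+$ to $Z\smallsetminus\textnormal{int}\,X^-$, producing the required homeomorphisms of pairs. The main obstacle in this plan is the Kirby-calculus verification underlying (1)(i) and (2)(iii): one must check not only that the $W^\pm(p)$-modification is undone by attaching a single $2$- and $3$-handle at the abstract handlebody level, but also that the handle slides involved are local enough to be compatible with every other $2$-handle of $X$ that might run through the modified region. Once this locality is secured, (2) becomes a formal consequence of Lemma~\ref{sec:attachment:lem:self-diffeo} and (3) of Freedman's theorem.
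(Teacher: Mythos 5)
Your overall route coincides with the paper's: (1)(i) by the Kirby moves of Figure~\ref{fig7}, (1)(ii) by dualizing the resulting $2$/$3$-handle cobordism, (2)(i)--(ii) from Lemma~\ref{sec:attachment:lem:self-diffeo} applied to the $S^2\times D^2$ neighbourhood $S_1$ of the cork together with $\gamma$, and (3) from the fact that the cork twist extends topologically. Two local points need repair, however. First, in (1)(ii) you assert that the dual $2$-handle runs \emph{geometrically} once over the dual $1$-handle. The paper only establishes (and only needs) that the dual pair is an \emph{algebraically} cancelling $1$- and $2$-handle pair; if the cancellation were geometric, $Z-\textnormal{int}\,X^+$ would be diffeomorphic to $Z-\textnormal{int}\,X$ outright, which is a much stronger claim than the proposition makes and is not what the figure shows (the meridian of the relevant $2$-handle is only homotopic to, not isotopic to, a curve meeting the dual $1$-handle once, because it tangles around $K$). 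Algebraic cancellation suffices for the homology, intersection form and boundary-homology claims; for $\pi_1$ one uses the geometric-intersection statement that the paper extracts from the second picture of Figure~\ref{fig7}, so you should be careful about which intersection is geometric and which is merely algebraic.

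Second, your argument for (2)(iii) does not work as stated: attaching a $2$- and a $3$-handle to $X^-$ to recover $X$ gives an embedding $X^-\hookrightarrow X$, and ``dualising this cobordism'' does not reassemble $X^-$ as $X$ with a $1$- and a $2$-handle attached (if $X=X^-\cup C$, it does not follow that $X^-=X\cup\bar{C}$). The correct and much simpler observation, which is the one the paper uses, is that the $W^-(p)$-modification leaves the entire handle structure of $X$ intact as a sub-handlebody of $X^-$ and merely adds one new $1$-handle and one new $2$-handle that cancel algebraically; the inclusion $X\subset X^-$ is then visible in the diagram and is a homology isomorphism for the same reason as in Proposition~\ref{prop:relation of attachments}. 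With these two corrections your proposal agrees with the paper's proof.
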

\begin{proof}$(1)$.(i).\ The first picture of Figure~\ref{fig7} is a local diagram of $X^+$. Following the procedure in the figure, we recover a diagram of $X$. Hence the claim follows. \smallskip

(1).(ii).\ Reverse the procedure in Figure~\ref{fig7} untill the second picture, keeping track of the 3-handle introduced in the fifth picture. Then we see, in the second picture of this figure, that the attaching sphere of the 3-handle intersects with the belt circle of the lower 0-framed 2-handle geometrically once. In the second picture, the 3-handle algebraically also cancels the upper 0-framed 2-handle. This is 
because the meridian of the upper 0-framed 2-handle in the second picture becomes homotopic to a curve linking the 0-framed unknot in the fourth picture geometrically once, although it also tangles around $K$.
Thus $Z-\textnormal{int}\,X^+$ is obtained from $Z-\textnormal{int}\,X$ by attaching a dual of this algebraically canceling 2- and 3-handle pairs (which is an algebraically canceling 1- and 2-handle pairs). 

This fact immediately gives the claim about the fundamental group, the homology groups and the intersection form. For the claim about the homology groups of the boundary, we first cap off the boundary of $Z$ to form a closed 4-manifold.  Now the claim follows from the fact above and the argument in the proof of Proposition~\ref{prop:relation of attachments}. \smallskip

$(2)$.\ Lemma~\ref{sec:attachment:lem:self-diffeo} gives (i). Note that the $Z=X$ case shows that $X^-$ (i.e.\ the cork twist of $X^+$) is embedded into $X$. Thus the complements of $X^+$ and $X^-$ in $Z$ are the same, and hence (ii) follows. Since $X^-$ is obtained from $X$ by attaching an algebraically canceling 1- and 2-handle pair, (iii) follows. \smallskip

$(3)$. By (2), the cork twist along $(W,f)$ changes $(Z, X^+)$ and $(Z, Z-\textnormal{int}X^+)$ into $(Z, X^-)$ and $(Z, Z-\textnormal{int}X^-)$, respectively. Since $f$ extends to a self homeomorphism of $W$, the claim follows. 
\end{proof}

Next we define Legendrian versions of $W^+$- and $W^-$-modifications for Legendrian handlebodies (recall Definition~\ref{sec:stein:def:handlebody}).\smallskip

Let $K$ be a 2-handle of a Legendrian handlebody. Take a small segment of the attaching circle of $K$ as in the first row of Figure~\ref{fig10}. Without loss of generality, we can assume that the orientation of the segment of $K$ is from the left to the right (Otherwise locally apply the Legendrian isotopy in Figure~\ref{fig9}. Note that this isotopy does not change the Thurston-Bennequin number and the rotation number).  
\begin{figure}[ht!]
\begin{center}
\includegraphics[width=2.3in]{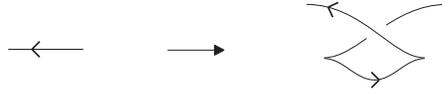}
\caption{Legendrian isotopy}
\label{fig9}
\end{center}
\end{figure}

\begin{definition}\label{sec:attachment:def:attachment} Let $p\geq 1$. We call the local operations shown in the left and the right side of Figure~\ref{fig10} a \textit{$W^+(p)$-modification} to $K$ and a \textit{$W^-(p)$-modification} to $K$, respectively. Here we orient the 2-handles as in the figure. Hence, each operation produces a new Legendrian handlebody from a given Legendrian handlebody. When we see Legendrian handlebodies as smooth handlebodies, these definitions and the orientations are consistent with those in Definition~\ref{sec:attachment:def:attachment} and Figure~\ref{fig3} (We can check this just by converting the 1-handle notation). Note that the auxiliary $2$-handle $\gamma$ to any $W^+(p)$- (resp.\ $W^-(p)$-) modification satisfies the following: its framing is $0$ (resp.\ $0$); $tb(\gamma)=2$ (resp.\ $tb(\gamma)=1$); $r(\gamma)=0$ (resp.\ $r(\gamma)=1$). 
\end{definition}
\begin{figure}[ht!]
\begin{center}
\includegraphics[width=3.9in]{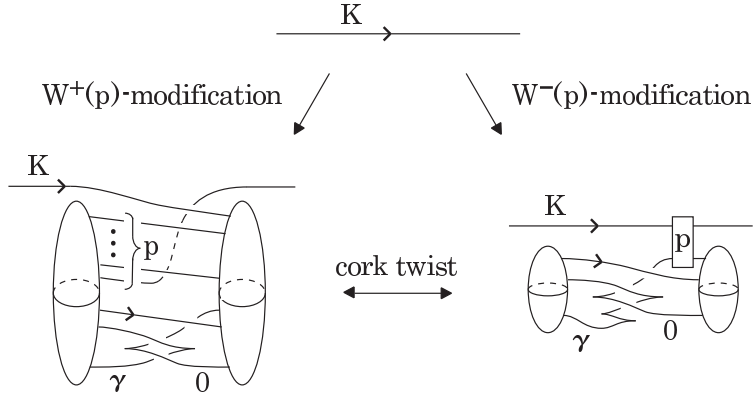}
\caption{$W^+(p)$- and $W^-(p)$-modification $(p\geq 1)$. Every framing is Seifert framing. The framing of $K$ is unchanged.}
\label{fig10}
\end{center}
\end{figure}
The above definition clearly shows the following.
\begin{proposition}\label{sec:attachment:prop:tb and r}Let $K$ be a $2$-handle of a Legendrian handlebody. \smallskip \\
$(1)$ Every $W^+(p)$-modification to $K$ has the following effect.
\begin{itemize}
 \item $tb(K)$ is increased by $p$, and $r(K)$ is unchanged. \smallskip
\end{itemize}
$(2)$ Every $W^-(p)$-modification to $K$ has the following effect.  
\begin{itemize}
 \item $tb(K)$ and $r(K)$ are unchanged. 
\end{itemize}
\end{proposition}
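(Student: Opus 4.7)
The plan is to verify both statements by a direct local Legendrian calculation from Figure~\ref{fig10}, since each modification only alters the attaching curve of $K$ inside the small region depicted. Because $tb$ and $r$ are additive under Legendrian connected sum, and because they can be read off the front projection (using the Gompf convention for $K$ passing through a $1$-handle, where one converts to the dotted circle notation before measuring the Seifert framing), it suffices to compute the change in writhe, the change in total cusp count, and the change in the signed cusp count (down-cusps minus up-cusps) contributed by the new local piece of $K$ relative to the original segment.

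First I would dispose of the $W^+(p)$-case. Comparing the ``before'' segment of $K$ (a straight Legendrian strand oriented left-to-right) with the ``after'' segment in Figure~\ref{fig10}, the new strand is obtained by routing $K$ across the dotted circle of the auxiliary $1$-handle and taking $p$ extra detours that wrap around the dotted circle. After converting the dotted circle to its Seifert-framing equivalent and tallying the local contributions, the writhe picks up exactly $p$ positive crossings while the pattern of cusps added is balanced (an equal number of up-cusps and down-cusps are introduced). Consequently $tb(K)$ increases by $p$ and $r(K)$ is unchanged, which is the content of~(1). The fact that the auxiliary $2$-handle $\gamma$ itself has $tb(\gamma)=2$, $r(\gamma)=0$ and Seifert framing~$0$ (as recorded in Definition~\ref{sec:attachment:def:attachment}) provides a convenient consistency check.

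Next I would handle the $W^-(p)$-case in exactly the same way. Here the ``after'' segment of $K$ in the right-hand picture of Figure~\ref{fig10} crosses the dotted circle in a different manner: the added local crossings come in cancelling pairs of signs, and the extra cusps again cancel in the rotation count but this time also cancel in the Thurston--Bennequin count. Thus both $tb(K)$ and $r(K)$ remain unchanged, giving~(2). The stated data for the auxiliary handle ($tb(\gamma)=1$, $r(\gamma)=1$, framing~$0$) once more confirms that the local picture is Legendrian-consistent with what Figure~\ref{fig10} depicts.

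The main obstacle is simply bookkeeping: signs of crossings and the up/down count of cusps must be read from Figure~\ref{fig10} after one correctly converts the $1$-handle to its dotted circle form, and one must be careful that the ambient Legendrian isotopy of Figure~\ref{fig9}, which we used silently to align the orientation of the original segment of $K$, does not itself change $tb(K)$ or $r(K)$ (it does not, as remarked just before Definition~\ref{sec:attachment:def:attachment}). Once this local count is carried out, both items of the proposition follow immediately from the additivity of $tb$ and $r$ under the local replacement.
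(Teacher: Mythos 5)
Your proposal is correct and follows essentially the same route as the paper, which offers no separate argument at all: it simply states that Definition~\ref{sec:attachment:def:attachment} (i.e.\ the local Legendrian picture of Figure~\ref{fig10}) ``clearly shows'' the proposition. Your explicit bookkeeping of writhe and cusp contributions in the front projection, after converting the $1$-handle to dotted-circle notation and using the Seifert framing convention, is just that diagram-reading spelled out, so there is nothing further to add.
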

\begin{remark}For simplicity we used only $(W_1,f_1)$ for $W$-modifications. Many other corks, including $(W_n,f_n)$, also work similarly. For example,  the operation of ``creating a positron'' (together with its cork twist)  introduced by Akbulut-Matveyev~\cite{AM2}  has similar effects. An important effect of $W$-modifications is to increase the ``minimal genera'' of second homology classes (under some conditions). This is implied in the next section, through the proof of Theorem~\ref{sec:intro:th:main}. Essentially different operations (e.g.\ band sum with a knot with a sufficiently large Thurston-Bennequin number) also have this effect, though they do not share some other effects. 
\end{remark}

\section{Exotic Stein 4-manifolds and exotic embeddings}\label{sec:main_algorithm}
\subsection{Construction}\label{subsec:main_algorithm}
Here  we give an algorithm which provides Theorems~\ref{sec:intro:th:main} and \ref{sec:into:th:exotic embedding}. Later in subsection~\ref{subsec:simplestexample} we demonstrate this algorithm on a simple example.
\begin{definition}\label{sec:main:def:X}
Let $X$ be a compact oriented 4-dimentional $2$-handlebody with $b_2(X)\geq 1$. 
Throughout this section we fix this $X$. Let $k:=b_2(X)-1$. 
\end{definition}
Now we begin with  the construction. Recall the definitions of Legendrian and Stein handlebodies in Definition~\ref{sec:stein:def:handlebody}. Apply the following Step~\ref{step1} to $X$. 

\begin{step}\label{step1}
Slide and isotope the handles of $X$ so that $X$ is a Legendrian handlebody and that its 2-handles satisfy the following condition. \smallskip 
\begin{itemize}
 \item [$\bullet$]$2$-handles $K_j$ $(0\leq j\leq k)$ of $X$ do not algebraically go over any $1$-handle. So the second homology classes of $X$ given by the 2-handles $K_j$ $(0\leq j\leq k)$ span a basis of $H_2(X;\mathbf{Z})$. Here  $K_j$ $(0\leq j\leq l)$ denote all the $2$-handles of $X$ $(l\geq k)$. \smallskip 
 \end{itemize}
\end{step}

We use the following terminiogy. 
\begin{definition}\label{sec:main:def:good stein}
We call a Stein handlebody a \textit{good Stein handlebody} if it satisfies the condition described in Step~\ref{step1}. 
\end{definition}
\begin{remark}[The outline of the algorithm] Here we briefly summarize the algorithm. (However, beware that the actual construction is rather different.)\smallskip \\
$(1)$ The $b_2(X)=1$ case:  Apply $W^-(p_1)$-, $W^-(p_2)$-, \dots, $W^-(p_n)$-modifications to $K_0$ of $X$ and call the result $X^{(n)}_0$ (Figure~\ref{fig11}). Then replace the above $W^-(p_i)$-modification with the corresponding $W^+(p_i)$-modification, and denote the resulting manifold by $X^{(n)}_i$. If we choose $p_1\ll p_2\ll \dots\ll p_n$ as sufficiently large integers, then the minimal genera of the second homology classes given by $K_0$ (after sliding over the auxiliary 2-handle $p_i$ times) in $X^{(n)}_i$ $(0\leq i\leq n)$ become mutually different. We check this using the adjunction inequalities in the Stein manifolds $X^{(n)}_i$ $(1\leq i\leq n)$. We also apply $W^+(q_j)$-modifications to other 2-handles $K_j$ $(1\leq j\leq l)$ of $X$ at the beginning (Figure~\ref{fig12}.). In short the minimal genera detect the smooth structures of $X^{(n)}_i$ $(0\leq i\leq n)$. \smallskip\\
$(2)$ The $b_2(X)\geq 2$ case: This case is a generalization of the $b_2(X)=1$ case and it is more technical. In this case, we further adjust the rotation number of each $K_j$ $(1\leq j\leq k)$ by the above $W^+(q_j)$-modification to prevent these handles affecting adjunction inequality arguments. To detect smooth structures, we discuss the minimal genera of bases of $H_2(X^{(n)}_i;\mathbf{Z})$, using adjunction inequalities. 
\end{remark}


To proceed with the construction we need the following basic data for $X$.
\begin{definition}\label{sec:main:def:main data}Denote by $m_j, r_j, t_j$ $(0\leq j\leq l)$, the framing, the rotation number, and the Thurston-Bennequin number of $K_j$ of $X$, respectively. Let $g_j$ $(0\leq j\leq k)$ be the genus of a smoothly embedded surface in  the sub $1$-handlebody of $X$ spanned by $K_j$. Note that the attaching circle of every $K_j$ $(0\leq j\leq k)$ spans a surface because algebraically it does not  go over any of the $1$-handles (cf.~\cite{GS}). 
\end{definition}


Using this data, we here define integers for the construction. Roughly speaking, the following conditions require the each integer to be sufficiently large. 
\begin{definition}\label{sec:main:def:q_j}Put $q_0=0$. In the $l\geq 1$ case, define non-negative integers $q_j$ $(1\leq j\leq l)$ so that they satisify the following conditions. 
\begin{itemize}
 \item [(i)] $q_j+(t_j-1)-m_j\geq 0$, for each $1\leq j\leq l$\smallskip
 \item [(ii)] $q_j+(t_j-1)-m_j\geq \lvert r_j\rvert$, for each $1\leq j\leq k$ (in the $k\geq 1$ case).
\end{itemize}
\end{definition}
\begin{definition}\label{sec:the algorithm: def: p_i}Put $p_{-1}=p_0=0$. 
Define an increasing integer sequence $p_i$ $(i\geq 1)$ so that it satisfies the following conditions. 
\begin{itemize} 
\item [(i)] $p_i\,>\,p_{i-1}$, for each $i\geq 1$. \smallskip
 \item [(ii)]  $p_1+(t_0-1)-m_0\geq 0$. \smallskip
 \item [(iii)] $2p_1+(t_0-1)-m_0+\rvert r_0\lvert +m_j> 2(g_j+q_j)-2$, for each $0\leq j\leq k$.\smallskip 
 \item  [(iv)] $2p_i+(t_0-1)+\rvert r_0\lvert\, > 2(g_0+p_{i-1})-2$, for each $i\geq 1$. 
\end{itemize}
\end{definition}
\begin{remark}$(1)$ In the case where $t_0-1+\rvert r_0\lvert=2g_0-2$, the condition (iv) in Definition~\ref{sec:the algorithm: def: p_i} reduces to (i). 
\smallskip\\
$(2)$ In Definitions~\ref{sec:main:def:main data}, \ref{sec:main:def:q_j} and~\ref{sec:the algorithm: def: p_i}, we do not require neither the maximalities nor the minimalities of those numbers, therefore we can easily define those numbers.\smallskip\\
$(3)$ We don't need to calculate $g_j$ and $r_j$ for $k+1\leq j\leq l$, we do not use them. 
\end{remark}

We next adjust the Thurston-Bennequin numbers (and the rotation numbers) of 2-handles except $K_0$. Figures~\ref{fig11}--\ref{fig13} describe the local operations applied to 2-handles $K_j$ $(0\leq j\leq l)$ of $X$, through the following Steps \ref{step2}--\ref{step5} (without specifying Legendrian diagrams). 

\begin{definition}\label{sec:algorithm:def:$widetilde{X}$} Let $\widehat{X}$ be the Legendrian handlebody obtained from $X$ by applying the above Step~\ref{step1} and the Step~\ref{step2} below. (Skip Step~\ref{step2} when $l=0$.) 

\begin{step}\label{step2} Apply a $W^+(q_j)$-modification and add zig-zags to each $2$-handle $K_j$ $(1\leq j\leq l)$ of $X$ so that the following conditions are satisfied (recall Proposition~\ref{sec:attachment:prop:tb and r}, Lemma~\ref{sec:stein:lem:zigzag} and the conditions of $q_j$). Let $\delta_j$ $(1\leq j\leq l,\, q_j\neq 0)$ be the auxiliary $2$-handle to the above $W^+(q_j)$-modification. In the $l>k$ case, also add a zig-zag to each $\delta_j$ $(k+1\leq j\leq l)$ as follows (ignore (iii) when $k=l$). \smallskip
 \begin{itemize}
 \item [(i)] $tb(K_j)=m_j+1$ $(1\leq j\leq l)$. \smallskip
 \item [(ii)] $\lvert r(K_j)\rvert\leq 1$ $(1\leq j\leq k)$. \smallskip
 \item [(iii)] $tb(\delta_j)=1$ $(k+1\leq j\leq l,\, q_j\neq 0)$
\end{itemize}
\end{step}
\end{definition}

\begin{remark}
Note that the Thurston-Bennequin number of every $2$-handle of $\widehat{X}$ except $K_0$ and all of $\delta_j$ $(1\leq j\leq k,\, q_j\neq 0)$ is one more than its framing. 
\end{remark}

In the rest of this section, fix a positive integer $n$. 


\begin{definition}\label{sec:the algorithm:def:X}Apply Steps~\ref{step3}, \ref{step4} and~\ref{step5},  define $X^{(n)}_0$ then $X^{(n)}_i$ and $\gamma_i$ as follows. 
\begin{step}\label{step3}
Define $X^{(n)}_{0}$ as the Legendrian handlebody obtained from $\widehat{X}$ by applying $W^-(p_1)$-, $W^-(p_2)$-, \dots, $W^-(p_n)$-modifications to the $2$-handle $K_0$. 
\end{step}
\begin{step}\label{step4} In the $l\geq 1$ case, define $X^{(n)}_{-1}$ as the Legendrian handlebody obtained from $X^{(n)}_0$ by replacing every $W^+(q_j)$-modification $(1\leq j\leq l,\, q_j\neq 0)$ applied in Step~\ref{step2} with the corresponding $W^-(q_j)$-modification. In this case, we also skip the zig-zag operations in Step~\ref{step2}. In the $l=0$ case, put $X^{(n)}_{-1}:=X^{(n)}_{0}$.
\end{step}
\begin{step}\label{step5} Define $X^{(n)}_i$ $(1\leq i\leq n)$ as the Legendrian handlebody obtained from $X^{(n)}_0$ by replacing the $W^-(p_i)$-modification applied in Step~\ref{step3} with the corresponding $W^+(p_i)$-modification.  Let $\gamma_i$ $(1\leq i\leq n)$ denote the auxiliary $2$-handle of $X^{(n)}_i$ to the above $W^+(p_i)$-modification.  By adding zig-zags to $K_0$ and $\gamma_i$, we can assume that $K_0$ and $\gamma_i$ of the each Legendrian handlebody $X^{(n)}_i$ $(1\leq i\leq n)$ satisfy the following conditions (i)--(v) (recall Proposition~\ref{sec:attachment:prop:tb and r} and Lemma~\ref{sec:stein:lem:zigzag}.). (Namely, we add zig-zags so that the value $\lvert \langle c_1(X^{(n)}_i),\, [K_0-p_i\gamma_i] \rangle\rvert$ becomes as large as possible, see Lemma~\ref{lem:change of stein structure}). 

\begin{itemize}
 \item [(i)] $tb(K_0)=m_0+1$.\smallskip 
 \item [(ii)] $\lvert r(K_0)\rvert=p_i+(t_0-1)-m_0+\lvert r_0\rvert$. \smallskip 
 \item [(iii)] $tb(\gamma_i)=1$. \smallskip 
 \item [(iv)] $\lvert r(\gamma_i)\rvert=1$. \smallskip 
 \item [(v)] In the $r(K_0)\neq 0$ case, the sign of $r(\gamma_i)$ is opposite to the sign of $r(K_0)$.
\end{itemize}
\end{step}
\end{definition}

We now finished the construction and here discuss Stein structures on $X^{(n)}_i$.

\begin{remark}\label{rem:$X^{(n)}_i$}$(1)$ If $b_2(X)=1$ or $(q_1,q_2,\dots,q_k)= 0$, then $X^{(n)}_i$ $(1\leq i\leq n)$ is now a Stein handlebody. \smallskip\\
$(2)$ In the case where $b_2(X)\geq 2$ and $(q_1,q_2,\dots,q_k)\neq 0$, $X^{(n)}_i$ $(1\leq i\leq n)$ is not a Stein handlebody yet, because the Thurston-Bennequin number of each $\delta_j$ $(1\leq j\leq k,\, q_j\neq 0)$  is still two more than its framing. We can make each $tb(\delta_j)$ one more than its framing, by adding a zig-zag either upward or downward. Correspondingly, $r(\delta_j)$ becomes $-1$ or $1$. This process gives various Stein structures on each $X^{(n)}_i$ $(1\leq i\leq n)$. We later use this flexibility of Stein structures to simplify adjunction inequality arguments. \smallskip \\ 
$(3)$ By adding a zig-zag to each $\delta_j$, $X^{(n)}_0$ becomes a Stein handlebody when the original $X$ is a good Stein handlebody. \smallskip \\ 
$(4)$ $X^{(n)}_{-1}$ is a good Stein handlebody when the original $X$ is a good Stein handlebody.\smallskip \\
$(5)$ As a smooth handlebody, $X^{(n)}_{-1}$ is obtained from $X$ only by $W^-$-modifications. Proposition~\ref{sec:attachment:prop:embed} thus shows that $X$  can be embedded into $X^{(n)}_{-1}$ so that the induced homomorphism $H_*(X;\mathbf{Z})\to H_*(X^{(n)}_{-1};\mathbf{Z})$ is an isomorphism.
\end{remark}
\begin{figure}[ht!]
\begin{center}
\includegraphics[width=4.6in]{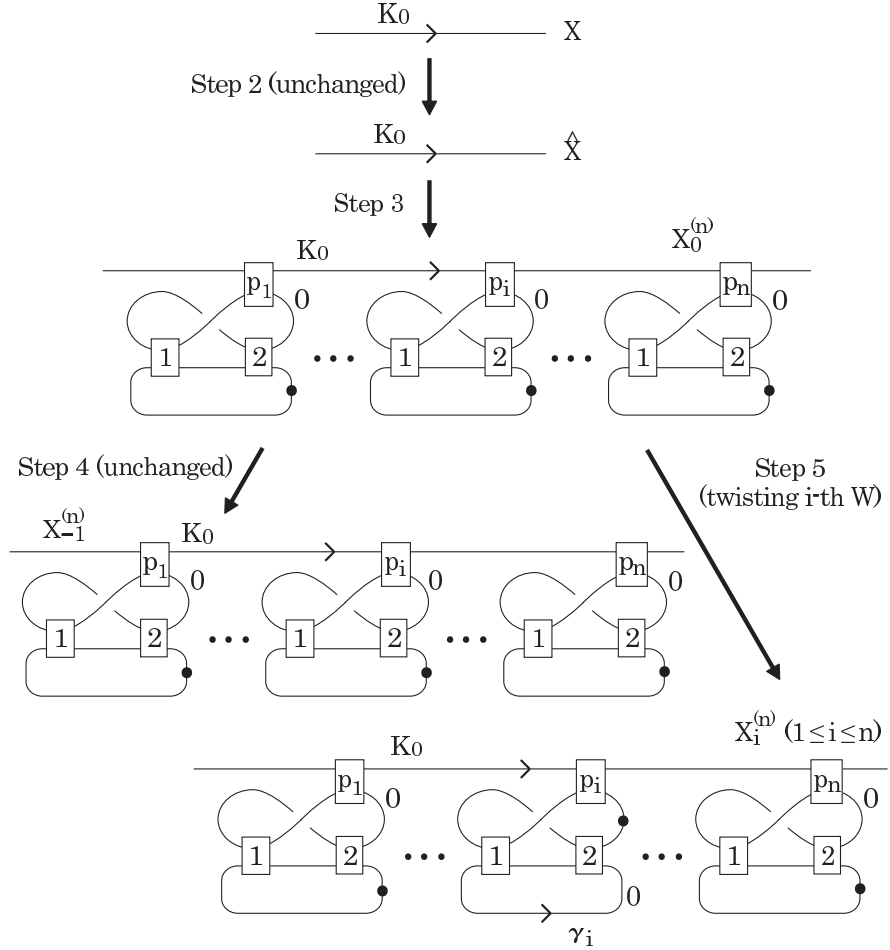}
\caption{Operations to $K_0$ (ignoring Legendrian diagrams)}
\label{fig11}
\end{center}
\end{figure}
\begin{figure}[ht!]
\begin{center}
\includegraphics[width=4.3in]{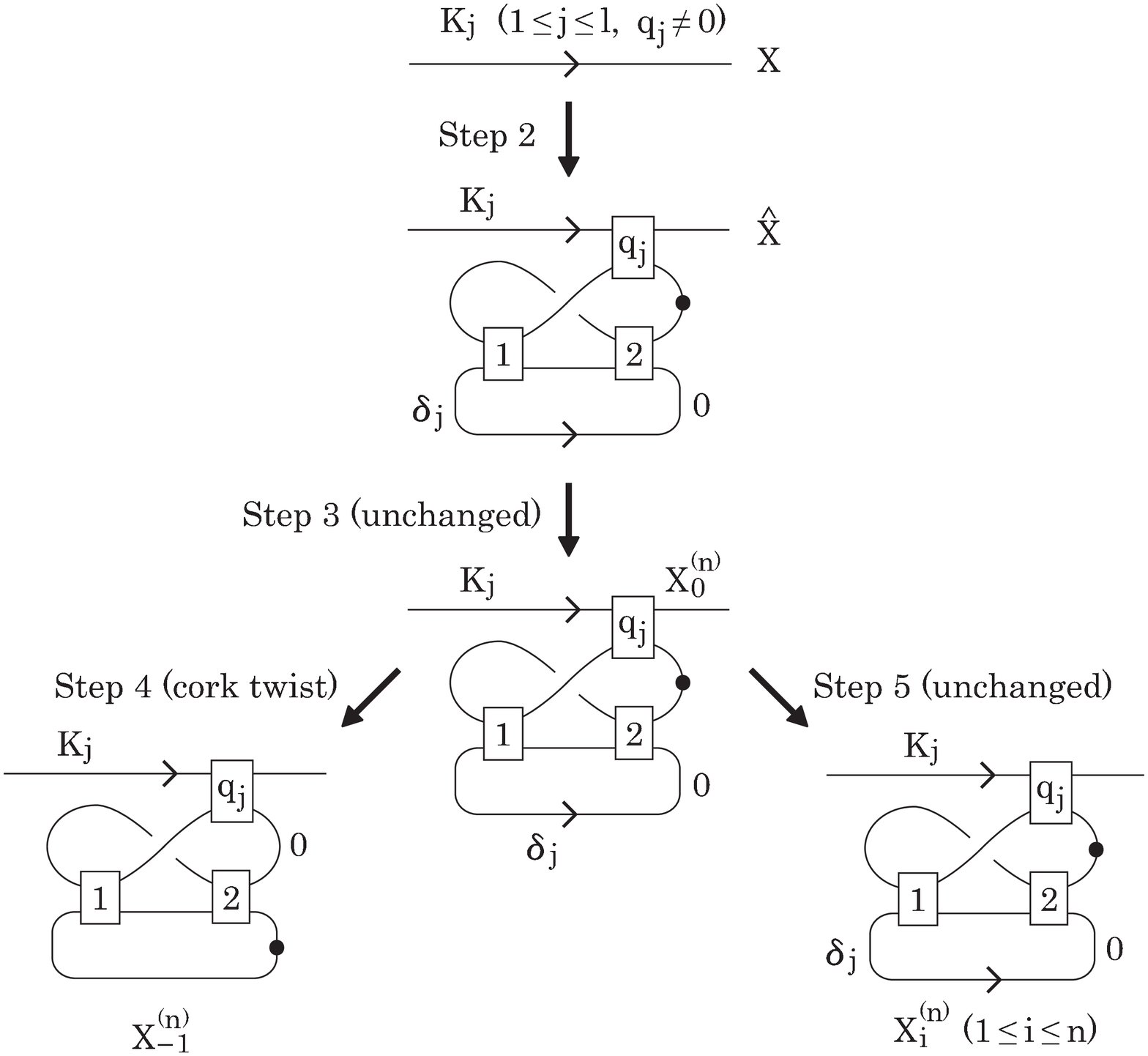}
\caption{Operations to $K_j$ $(1\leq j\leq l,\, q_j\neq 0)$ (ignoring Legendrian diagrams)}
\label{fig12}
\end{center}
\end{figure}
\begin{figure}[ht!]
\begin{center}
\includegraphics[width=3.5in]{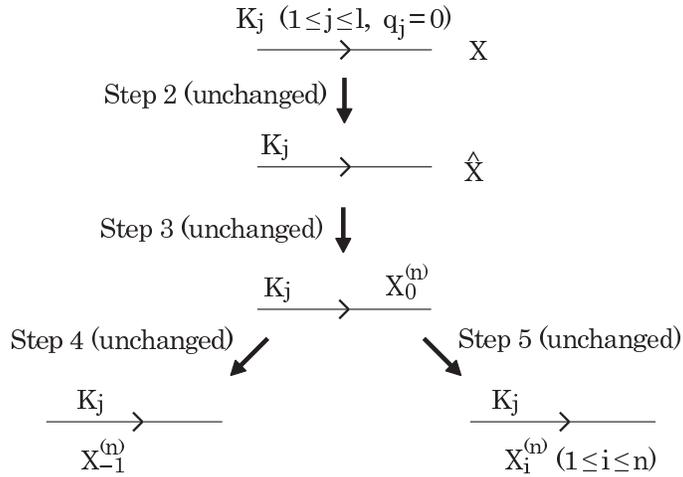}
\caption{Operations to $K_j$ $(1\leq j\leq l,\, q_j= 0)$ (ignoring Legendrian diagrams)}
\label{fig13}
\end{center}
\end{figure}
\subsection{Detecting smooth structures}
We next detect the smooth structures of $X^{(n)}_i$'s, examining the genera of bases of their second homology groups. 
\begin{definition}\label{sec:algorithm:def:v}Ignore $(2)$ when $b_2(X)=1$.\smallskip \\
$(1)$ Define $v^{(0)}_0$ as the element of $H_2(X^{(n)}_0;\mathbf{Z})$ given by the 2-handle $K_0$. Define $v^{(i)}_0$ $(1\leq i\leq n)$ as the element of $H_2(X^{(n)}_i;\mathbf{Z})$ given by the 2-handle $K_0-p_i\gamma_i$, which denotes the result of $K_0$ by sliding over $\gamma_i$ $p_i$-times so that it does not algebraically go over any 1-handle. \smallskip \\
$(2)$ For each $0\leq i\leq n$, define $v^{(i)}_j$ $(1\leq j\leq k)$ as the elements of $H_2(X^{(n)}_i;\mathbf{Z})$ given by the 2-handles $K_j-q_j\delta_j$. Here $K_j-q_j\delta_j$ denotes the result of $K_j$ by sliding over $\delta_j$ $q_j$-times so that it does not algebraically go over any 1-handle.  \smallskip\\ 
$(3)$ Let $v^{(-1)}_j$ $(0\leq j\leq k)$ as the element of $H_2(X^{(n)}_{-1};\mathbf{Z})$ given by the 2-handle $K_j$. 
\end{definition}


The Lemma below clearly follows from Proposition~\ref{sec:attachments:prop:genus of attachments}. 


\begin{lemma}\label{sec:main:lem:genus}
$(1)$ For each $0\leq i\leq n$, the elements $v^{(i)}_j$ $(0\leq j\leq k)$ span a basis of $H_2(X^{(n)}_i;\mathbf{Z})$ and satisfy the following conditions $($ignore \textnormal{(ii)} when $k=0$$)$.\smallskip
\begin{itemize}
 \item [(i)] $v^{(i)}_0$ is represented by a smoothly embedded genus $g_0+p_i$ surface, satisfying $v^{(i)}_0\cdot v^{(i)}_0=m_0$.\smallskip 
 \item [(ii)] Each $v^{(i)}_j$ $(1\leq j\leq k)$ is represented by a smoothly embedded genus $g_j+q_j$ surface, satisfying $v^{(i)}_j\cdot v^{(i)}_j=m_j$.
\end{itemize}
$(2)$ $v^{(-1)}_j$ $(0\leq j\leq k)$ span a basis of $H_2(X^{(n)}_{-1};\mathbf{Z})$, and each $v^{(-1)}_j$ $(0\leq j\leq k)$ is represented by a smoothly embedded genus $g_j$ surface satisfying $v^{(-1)}_j\cdot v^{(-1)}_j=m_j$.
\end{lemma}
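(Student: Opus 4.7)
The plan is to reduce the lemma to a direct application of Proposition~\ref{sec:attachments:prop:genus of attachments}, combined with a careful inspection of Figure~\ref{fig3} verifying that $W^-(p)$-modifications preserve the two pieces of data needed: the algebraic linking of the attaching circle with the ambient $1$-handles, and the minimal genus of a surface bounded by it in the sub $1$-handlebody.

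For the basis and self-intersection claims, I would invoke Proposition~\ref{prop:relation of attachments}: each $W^\pm$-modification adds a cork whose new $1$-handle is algebraically canceled by the associated auxiliary $2$-handle, so the induced map on $H_2$ is an isomorphism. The classes $v^{(i)}_j$ are by construction represented by (possibly slid) $2$-handles whose attaching circles algebraically avoid every $1$-handle, so under the canonical identification $H_2(X^{(n)}_i;\mathbf{Z}) \cong H_2(X;\mathbf{Z})$ they correspond to the classes $[K_j]$ in $H_2(X;\mathbf{Z})$; those form a basis by Step~\ref{step1}. The same reasoning applies to $X^{(n)}_{-1}$. The self-intersection $v^{(i)}_0 \cdot v^{(i)}_0 = m_0$ I would read off from the expansion
\begin{equation*}
v^{(i)}_0 \cdot v^{(i)}_0 \;=\; [K_0]^2 - 2p_i\,[K_0]\cdot[\gamma_i] + p_i^2\,[\gamma_i]^2,
\end{equation*}
using Figure~\ref{fig3} to see that the auxiliary $\gamma_i$ is $0$-framed and algebraically unlinked from $K_0$, while the framing $m_0$ of $K_0$ is preserved by every $W^\pm$-modification. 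The analogous computation handles the $v^{(i)}_j$ with $j\geq 1$, and $v^{(-1)}_j$ is even simpler since no slides occur.

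For the genus claims with $1 \leq i \leq n$, I would first view $X^{(n)}_i$ as obtained from an intermediate Legendrian handlebody $Y$ by a single $W^+(p_i)$-modification to $K_0$, where $Y$ is built from $\widehat{X}$ by performing the $W^-(p_s)$-modifications to $K_0$ for all $s \neq i$. Inspecting Figure~\ref{fig3} on the $W^-$ side, one sees that the segment of $K$ introduced by a $W^-(p)$-modification can be capped off within the cork without raising genus, so $K_0$ in $Y$ still spans a genus-$g_0$ surface in a sub $1$-handlebody of $Y$. Proposition~\ref{sec:attachments:prop:genus of attachments} applied to the single $W^+(p_i)$-modification then yields the advertised genus-$(g_0+p_i)$ surface for $K_0 - p_i \gamma_i$ in $X^{(n)}_i$. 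The same approach, applied to each $K_j$ with $1 \leq j \leq k$ undergoing a single $W^+(q_j)$-modification (or none, when $q_j = 0$), delivers the genus-$(g_j + q_j)$ surface for $v^{(i)}_j$. The remaining cases, $i = 0$ and the whole of part~(2), involve only $W^-$-modifications, so each $K_j$ continues to bound its original genus-$g_j$ surface.

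The main obstacle is exactly this local verification on Figure~\ref{fig3}: that a $W^-(p)$-modification does not algebraically thread $K$ through the cork's new $1$-handle and does not force the original bounding surface to acquire extra genus. Once that figure-level check is in place, everything else is routine bookkeeping on top of Propositions~\ref{prop:relation of attachments} and~\ref{sec:attachments:prop:genus of attachments}.
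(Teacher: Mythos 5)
Your proposal is correct and follows essentially the same route as the paper, which disposes of this lemma in one line by declaring it a direct consequence of Proposition~\ref{sec:attachments:prop:genus of attachments} (with Proposition~\ref{prop:relation of attachments} implicitly handling the basis and self-intersection claims). Your write-up is simply a fleshed-out version of that argument, and you correctly isolate the one figure-level verification the paper leaves tacit, namely that a $W^-(p)$-modification leaves $K$ isotopic to its original position in the enlarged sub $1$-handlebody (the same fact the paper uses in Proposition~\ref{sec:attachment:prop:embed}(2)(iii)).
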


We here use the flexibility of Stein structures on $X^{(n)}_i$ $(1\leq i\leq n)$. 
\begin{lemma}\label{lem:change of stein structure}For each integers $a_0, a_1, \dots, a_k$, there exists a Stein structure $J$ on the each smooth $4$-manifold $X^{(n)}_i$ $(1\leq i\leq n)$ such that
\begin{align*}
\lefteqn{\lvert \langle c_1(X^{(n)}_i,J),\, a_0v^{(i)}_0+a_1v^{(i)}_1+\dots+a_kv^{(i)}_k \rangle\rvert} \\
 &\geq\lvert a_0\rvert (2p_i+(t_0-1)-m_0+\lvert r_0\rvert)+ \lvert a_1\widehat{q}_1\rvert +\lvert a_2\widehat{q}_2\rvert +\dots+\lvert a_k\widehat{q}_k\rvert, 
\end{align*}
where $\widehat{q}_j=q_j-1$ $(\textnormal{if}\;\, q_j\neq 0)$ and $\widehat{q}_j=0$ $(\textnormal{if}\;\, q_j= 0)$. 
Furthermore, the equality holds  in the $k=0$ case $($ignoring the last $k$ terms$)$.
\end{lemma}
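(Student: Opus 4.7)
The central tool will be Theorem~\ref{sec:stein:th:stein_rotation}: for any Stein structure $J$ on $X^{(n)}_i$, the class $c_1(X^{(n)}_i,J)$ is represented by the cocycle sending each oriented Legendrian $2$-handle $K$ to its rotation number $r(K)$. The available flexibility of $J$ lives in the directions of the zig-zags added to $K_0$ and $\gamma_i$ in Step~\ref{step5} and to each $\delta_j$ in Remark~\ref{rem:$X^{(n)}_i$}$(2)$. My plan is to coordinate all these sign choices so that every summand $a_j\langle c_1,v^{(i)}_j\rangle$ is non-negative; then the triangle inequality collapses to an equality and the asserted lower bound can be read off term-by-term.

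First I would evaluate on $v^{(i)}_0 = K_0 - p_i\gamma_i$. Step~\ref{step5}(ii)--(v) give $|r(K_0)| = p_i+(t_0-1)-m_0+|r_0|$, $|r(\gamma_i)|=1$, and $\operatorname{sign}(r(\gamma_i)) = -\operatorname{sign}(r(K_0))$ whenever $r(K_0)\neq 0$. The two contributions therefore add rather than cancel, giving
\[
|\langle c_1,v^{(i)}_0\rangle| \;=\; |r(K_0)| + p_i \;=\; 2p_i + (t_0-1) - m_0 + |r_0|,
\]
and the same identity persists in the edge case $r(K_0)=0$, where $p_i+(t_0-1)-m_0+|r_0|=0$ forces $|\langle c_1,v^{(i)}_0\rangle|=p_i$, which still equals $2p_i+(t_0-1)-m_0+|r_0|$. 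Reversing the zig-zag orientations on $K_0$ (together with the one on $\gamma_i$) flips the common sign, so I can arrange $a_0\langle c_1,v^{(i)}_0\rangle \geq 0$.

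Next I would evaluate on $v^{(i)}_j = K_j - q_j\delta_j$ for $1 \leq j \leq k$. If $q_j=0$ then $\widehat{q}_j = 0$ and there is nothing to prove. If $q_j \geq 1$, Step~\ref{step2}(ii) gives $|r(K_j)| \leq 1$, and Remark~\ref{rem:$X^{(n)}_i$}$(2)$ makes $r(\delta_j)\in\{-1,+1\}$ an independent free choice. Setting $r(\delta_j) = -\operatorname{sign}(a_j)$ (either sign if $a_j=0$) yields
\[
a_j\langle c_1,v^{(i)}_j\rangle \;=\; a_j r(K_j) + |a_j|q_j \;\geq\; |a_j|(q_j-1) \;=\; |a_j|\widehat{q}_j \;\geq\; 0,
\]
since $|a_j r(K_j)| \leq |a_j|$. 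With every summand non-negative, summing over $j$ gives
\[
\Bigl|\bigl\langle c_1,\textstyle\sum_{j=0}^{k} a_j v^{(i)}_j\bigr\rangle\Bigr| \;=\; \sum_{j=0}^{k} a_j\langle c_1,v^{(i)}_j\rangle \;\geq\; |a_0|\bigl(2p_i+(t_0-1)-m_0+|r_0|\bigr) + \sum_{j=1}^{k}|a_j|\widehat{q}_j,
\]
which is the claimed inequality. When $k=0$ only the $a_0$-term survives and the first evaluation above is exact, giving equality.

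The main obstacle will be the sign-matching book-keeping: one has to verify that the zig-zag orientations on $K_0$, $\gamma_i$, and each $\delta_j$ can be chosen mutually independently and simultaneously realized as a single Stein handlebody structure on $X^{(n)}_i$ (this is where Remark~\ref{rem:$X^{(n)}_i$}$(2)$ and the conventions in Step~\ref{step5}(v) interact). Once this independence and the explicit rotation-number identities of Steps~\ref{step2} and~\ref{step5} are in place, the remainder of the argument is a routine sign analysis with no further geometric input.
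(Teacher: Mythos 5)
Your overall strategy is the paper's: the proof there is a two-line appeal to Theorem~\ref{sec:stein:th:stein_rotation} together with the freedom of adding a zig-zag to each $\delta_j$, and your explicit evaluations $\langle c_1,v^{(i)}_0\rangle=r(K_0)-p_ir(\gamma_i)$ and $\langle c_1,v^{(i)}_j\rangle=r(K_j)-q_jr(\delta_j)$, the estimate $a_jr(K_j)+|a_j|q_j\geq|a_j|\widehat{q}_j$, and the $k=0$ equality are all correct.

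There is, however, one step that fails as stated: the claim that you can flip the sign of $\langle c_1,v^{(i)}_0\rangle$ by ``reversing the zig-zag orientations on $K_0$ (together with the one on $\gamma_i$).'' To pass from $tb(K_0)=t_0+p_i$ down to $m_0+1$ one must add $t:=p_i+(t_0-1)-m_0$ zig-zags, and by Lemma~\ref{sec:stein:lem:zigzag} the resulting rotation number lies in $\{r_0-t,\,r_0-t+2,\dots,r_0+t\}$. The magnitude $|r(K_0)|=t+|r_0|$ demanded by Step~\ref{step5}(ii) (and needed for your formula $|\langle c_1,v^{(i)}_0\rangle|=2p_i+(t_0-1)-m_0+|r_0|$) is attained only at the endpoint $r_0+t$ when $r_0>0$ and only at $r_0-t$ when $r_0<0$; the opposite endpoint has the strictly smaller magnitude $t-|r_0|$. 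So when $r_0\neq 0$ the sign of $r(K_0)$, hence of $\langle c_1,v^{(i)}_0\rangle$, is forced to be $\mathrm{sign}(r_0)$, and your normalization $a_0\langle c_1,v^{(i)}_0\rangle\geq 0$ cannot always be arranged. The repair is immediate and costs nothing: let $\epsilon=\pm1$ be the (fixed) sign of $a_0\langle c_1,v^{(i)}_0\rangle$ and choose $r(\delta_j)=-\epsilon\,\mathrm{sign}(a_j)$ instead of $-\mathrm{sign}(a_j)$; then $\epsilon\,a_j\langle c_1,v^{(i)}_j\rangle\geq|a_j|\widehat{q}_j$ for all $j\geq1$, every summand of $\epsilon\langle c_1,\sum a_jv^{(i)}_j\rangle$ is bounded below by the corresponding term of the right-hand side, and the inequality follows. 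In other words, the only sign freedom you need (and the only one the paper invokes) is in the $\delta_j$'s; the term coming from $v^{(i)}_0$ should be treated as having a fixed sign to which the others are aligned.
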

\begin{proof}Recall Steps~\ref{step2} and \ref{step5} and Remark~\ref{rem:$X^{(n)}_i$}.(2). By appropriately adding a zig-zag to each $\delta_j$ $(1\leq j\leq k)$ of $X^{(n)}_i$, Theorem~\ref{sec:stein:th:stein_rotation} easily gives the required claim.
\end{proof}
\begin{proposition}\label{sec:main theorem:prop:genus}For any $1\leq i\leq n$, there exists no basis $u_0,u_1,\dots,u_k$ of $H_2(X^{(n)}_i;\mathbf{Z})$ which satisfies the following conditions $($ignore \textnormal{(ii)} when $b_2(X)=1$$)$.\smallskip
\begin{itemize}
 \item [(i)] $u_0$ is represented by a smoothly embedded surface with its genus equal to or less than $g_0+p_{i-1}$ and satisfies $u^2_0=m_0$.\smallskip
 \item [(ii)] Each $u_j$ $(1\leq j\leq k)$ is represented by a smoothly embedded surface of genus $g_j+q_j$ and satisfies $u^2_j=m_j$.
\end{itemize}
\end{proposition}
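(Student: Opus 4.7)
The plan is to argue by contradiction, combining the adjunction inequality of Akbulut-Matveyev with the flexibility of Stein structures on $X^{(n)}_i$ provided by Lemma~\ref{lem:change of stein structure}. Suppose a basis $u_0, \dots, u_k$ of $H_2(X^{(n)}_i;\mathbf{Z})$ satisfying (i) and (ii) exists. First I will express each $u_j$ in terms of the basis $v^{(i)}_0, \dots, v^{(i)}_k$ of Lemma~\ref{sec:main:lem:genus} as
\[
u_j \;=\; \sum_{l=0}^{k} B_{jl}\, v^{(i)}_l,
\]
where $B = (B_{jl})$ is an integer matrix with $\det B = \pm 1$. Consequently no column of $B$ is zero, so its $0$-th column is nonzero and I may fix an index $j_0 \in \{0,1,\dots,k\}$ with $|B_{j_0 0}| \geq 1$. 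Applying Lemma~\ref{lem:change of stein structure} with $(a_0,\dots,a_k) = (B_{j_0 0}, \dots, B_{j_0 k})$ produces a Stein structure $J$ on $X^{(n)}_i$ satisfying
\[
|\langle c_1(X^{(n)}_i, J),\, u_{j_0} \rangle| \;\geq\; |B_{j_0 0}|\bigl(2p_i + (t_0-1) - m_0 + |r_0|\bigr) + \sum_{l=1}^{k} |B_{j_0 l}\, \widehat{q}_l|.
\]

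Next I will apply the adjunction inequality to a smoothly embedded surface representing $u_{j_0}$, which is nonzero since the $u_j$ form a basis. Using $|B_{j_0 0}| \geq 1$ and the non-negativity of the remaining terms on the right-hand side, the case $j_0 = 0$ gives $m_0 + (2p_i + (t_0-1) - m_0 + |r_0|) \leq 2(g_0 + p_{i-1}) - 2$, i.e.\ $2p_i + (t_0-1) + |r_0| \leq 2(g_0 + p_{i-1}) - 2$, which directly contradicts Definition~\ref{sec:the algorithm: def: p_i}(iv). In the case $j_0 \geq 1$, the analogous application gives $m_{j_0} + (2p_i + (t_0-1) - m_0 + |r_0|) \leq 2(g_{j_0} + q_{j_0}) - 2$, and since $p_i \geq p_1$ by Definition~\ref{sec:the algorithm: def: p_i}(i) this contradicts Definition~\ref{sec:the algorithm: def: p_i}(iii). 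Either way we reach a contradiction and the proposition follows.

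The main subtlety is the case $B_{00} = 0$, in which one cannot directly bound $|\langle c_1, u_0 \rangle|$ from below and has to look elsewhere in the basis; what rescues the argument is unimodularity of $B$, which forces some $j_0 \geq 1$ with nonzero $B_{j_0 0}$, together with the fact that the large absolute rotation number of $K_0$ built up in Step~\ref{step5} (arranged by Step~\ref{step5}(v) to have sign opposite to $r(\gamma_i)$) still contributes to $|\langle c_1, u_{j_0} \rangle|$ through Lemma~\ref{lem:change of stein structure}. Conditions (iii) and (iv) of Definition~\ref{sec:the algorithm: def: p_i} are tailored precisely so that each of these two cases closes off; notice that only $|B_{j_0 0}| \geq 1$ is ever used, so the auxiliary terms involving $\widehat{q}_l$ play no role in producing the contradiction and the argument is uniform in $k$.
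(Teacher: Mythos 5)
Your proposal is correct and takes essentially the same route as the paper's proof: the same expansion of the $u_j$ in the basis $v^{(i)}_0,\dots,v^{(i)}_k$, the same combination of Lemma~\ref{lem:change of stein structure} with the adjunction inequality, and the same appeal to conditions (i)--(iv) of Definition~\ref{sec:the algorithm: def: p_i}. The only difference is organizational: the paper deduces $a^{(j)}_0=0$ for every $j$ and then contradicts the basis property, while you invoke unimodularity first to pick a single $j_0$ with $B_{j_0 0}\neq 0$ and reach the contradiction there --- a contrapositive rearrangement of the identical estimates.
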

\begin{proof}
Fix $i$ with $1\leq i\leq n$. 
Suppose that a basis $u_0,u_1,\dots,u_k$ of $H_2(X^{(n)}_i;\mathbf{Z})$ satisfies the above conditions (i) and (ii). We can assume that the genus of $u_0$ is $g_0+p_{i-1}$, by taking a connected sum with a null-homologous surface in $X^{(n)}_i$. For each $0\leq j\leq k$, put $u_j:=a^{(j)}_0v^{(i)}_0+a^{(j)}_1v^{(i)}_1+\dots+a^{(j)}_kv^{(i)}_k$. Lemma~\ref{lem:change of stein structure} and the adjunction inequality for $u_0$ give the inequality below. 
\begin{align*}
\lefteqn{2(g_0+p_{i-1})-2\geq\lvert a^{(0)}_0\rvert (2p_i+(t_0-1)-m_0+\lvert r_0\rvert)} \\
 &\qquad \qquad \qquad \qquad \quad+\lvert a^{(0)}_1\widehat{q}_1\rvert+\lvert a^{(0)}_2\widehat{q}_2\rvert+\dots+\lvert a^{(0)}_k\widehat{q}_k\rvert+m_0.
\end{align*}
This inequality and the condition (iv) of $p_i$'s in Definition~\ref{sec:the algorithm: def: p_i} easily show the following.
\begin{equation*}
0>(\lvert a^{(0)}_0\rvert-1) (2p_i+(t_0-1)-m_0+\lvert r_0\rvert).
\end{equation*}
The conditions (i) and (ii) of $p_i$'s in Definition~\ref{sec:the algorithm: def: p_i} thus give $a^{(0)}_0=0$. When $k=0$, this fact contradicts the assumption, hence the required claim follows. We thus assume $k\geq 1$. \smallskip

Lemma~\ref{lem:change of stein structure} and the adjunction inequality for $u_j$ $(1\leq j\leq k)$ give the below. 
\begin{align*}
\lefteqn{2(g_j+q_{j})-2\geq\lvert a^{(j)}_0\rvert (2p_i+(t_0-1)-m_0+\lvert r_0\rvert)} \\
&\qquad \qquad \qquad \qquad +\lvert a^{(j)}_1\widehat{q}_1\rvert+\lvert a^{(j)}_2\widehat{q}_2\rvert+\dots+\lvert a^{(j)}_k\widehat{q}_k\rvert+m_j.
\end{align*}
This inequality and the condition (i) and (iii) of $p_i$'s in Definition~\ref{sec:the algorithm: def: p_i} easily give the following. 
\begin{equation*}
0>(\lvert a^{(j)}_0\rvert-1) (2p_i+(t_0-1)-m_0+\lvert r_0\rvert).
\end{equation*}
The conditions (i) and (ii) of $p_i$'s in Definition~\ref{sec:the algorithm: def: p_i} thus gives $a^{(j)}_0=0$. We thus have $a^{(0)}_0=a^{(1)}_0=\dots=a^{(k)}_0=0$. This fact contradicts the assumption that $u_0,u_1,\dots, u_k$ is a basis. Hence the required claim follows. 
\end{proof}

To summarize, here we list up properties of $X^{(n)}_i$. Beware that $X_0$ in Theorem~\ref{sec:intro:th:main} corresponds to $X^{(n)}_{-1}$ in this section and that $X_i$ $(1\leq i\leq n)$ corresponds to $X^{(n)}_{i}$. 
 
\begin{theorem}\label{sec:algorithm1:main theorem}Let $X$ be any $2$-handlebody with $b_2(X)\geq 1$. Fix $n\geq 1$. Let $X^{(n)}_i$ $(-1\leq i\leq n)$ denote the corresponding Legendrian handlebodies in Definition~\ref{sec:the algorithm:def:X}. Then the following properties hold. \smallskip \\
$(1)$ The fundamental group, the integral homology groups, the integral homology groups of the boundary, and the intersection form of each $X^{(n)}_i$ $(-1\leq i\leq n)$ are isomorphic to those of $X$.\smallskip \\
$(2)$ $X^{(n)}_i$ $(0\leq i\leq n)$ are mutually homeomorphic but not diffeomorphic with respect to the given orientations. When either the following \textnormal{(i)} or \textnormal{(ii)} holds, they are mutually non-diffeomorphic with any orientations. The same properties also hold for $X^{(n)}_i$ $(-1\leq i\leq n,\; i\neq 0)$.\smallskip 
\begin{itemize}
 \item [(i)] $b_2(X)=1$.\smallskip 
 \item [(ii)] The intersection form of $X$ is represented by the zero matrix. \smallskip
\end{itemize}
$(3)$ Each $X^{(n)}_i$ $(1\leq i\leq n)$ admits a Stein structure. $X^{(n)}_{-1}$ and $X^{(n)}_{0}$ admit Stein structures when $X$ is a good Stein handlebody. \smallskip \\
$(4)$ $X$ can be embedded into $X^{(n)}_{-1}$ so that the induced homomorphism is an isomorphism between the integral homology groups of $X$ and $X^{(n)}_{-1}$. Therefore $X^{(n)}_{-1}$ does not admit any Stein structure when $X$ cannot be embedded into any simply connected minimal symplectic $4$-manifold with $b^+_2>1$ $($or any minimal complex surface of general type with $b^+_2>1$$)$ so that the induced homomorphism between the second homology groups is injective.\smallskip\\
$(5)$ Each $X^{(n)}_i$ $(-1\leq i\leq n)$ can be embedded into $X$. \smallskip\\
$(6)$ There exist disjoint copies $C_i$ $(1\leq i\leq n)$ of $W_1$ in $X^{(n)}_0$ such that $X^{(n)}_i$ is the cork twist of $X^{(n)}_0$ along $(C_i, f_1)$. 
\end{theorem}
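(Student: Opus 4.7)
The plan is to dispatch the six items in order; most are direct consequences of the construction together with the preparatory results of Section~\ref{sec:modification}, while part (2) carries the real content.

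Parts (1), (3), (5), and (6) follow immediately from the construction. For (1), each $X^{(n)}_i$ is obtained from $X$ by a finite sequence of $W^{\pm}$-modifications interspersed with handle slides and zig-zag additions (the latter two being smooth isotopies), so iterating Proposition~\ref{prop:relation of attachments} preserves the listed invariants. For (5), iterate Proposition~\ref{sec:attachment:prop:embed}(1)(i) and (2)(i). For (6), the $W^-(p_i)$-modifications building $X^{(n)}_0$ are performed on disjoint arcs of $K_0$ and hence introduce pairwise disjoint contractible copies $C_i\cong W_1$; by Definition~\ref{sec:attachment:def:attachment} (cf.~Figure~\ref{fig3}), replacing the $i$-th $W^-(p_i)$ by $W^+(p_i)$ is exactly the cork twist along $(C_i,f_1)$. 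Part (3) is recorded in Remark~\ref{rem:$X^{(n)}_i$}(1)--(4), modulo adding one zig-zag to each $\delta_j$ when needed.

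The heart of the argument is part (2). Mutual homeomorphism follows by iterating Proposition~\ref{sec:attachment:prop:embed}(3). For non-diffeomorphism in the given orientations, suppose a diffeomorphism $\varphi:X^{(n)}_i\to X^{(n)}_{i'}$ exists with $i<i'$, both indices lying in either $\{0,1,\dots,n\}$ or $\{-1,1,\dots,n\}$; in particular $i'\ge 1$. Push forward the basis $\{v^{(i)}_j\}_{j=0}^k$ of $H_2(X^{(n)}_i;\mathbf{Z})$ supplied by Lemma~\ref{sec:main:lem:genus} to a basis of $H_2(X^{(n)}_{i'};\mathbf{Z})$. Then $\varphi_{*}(v^{(i)}_0)$ is represented by a smoothly embedded surface of genus at most $g_0+p_i\le g_0+p_{i'-1}$ with self-intersection $m_0$, and each $\varphi_{*}(v^{(i)}_j)$ ($1\le j\le k$) by one of genus at most $g_j+q_j$ with self-intersection $m_j$; adjusting the genera upward by connect-summing with null-homologous tori if necessary produces a basis contradicting Proposition~\ref{sec:main theorem:prop:genus} applied to $X^{(n)}_{i'}$.

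For the orientation-independent statement, in case (i) $b_2(X)=1$ the minimal genus of a generator of $H_2$ is a diffeomorphism invariant of the underlying unoriented $4$-manifold, and Lemma~\ref{sec:main:lem:genus} together with Proposition~\ref{sec:main theorem:prop:genus} confines this invariant to the disjoint interval $(g_0+p_{i-1},\,g_0+p_i]$ for $X^{(n)}_i$. In case (ii) the intersection form vanishes, so every self-intersection condition $u_j^2=m_j=0$ is symmetric under orientation reversal and the genus-based argument goes through verbatim. For (4), the homology-inducing embedding $X\hookrightarrow X^{(n)}_{-1}$ is Remark~\ref{rem:$X^{(n)}_i$}(5); if $X^{(n)}_{-1}$ were Stein, Theorem~\ref{sec:stein:th:closing of Stein} would $H_2$-injectively embed it into a simply connected minimal symplectic $4$-manifold (or minimal complex surface of general type) with $b_2^+>1$, and composing with the above embedding would give an $H_2$-injective embedding of $X$ ruled out by hypothesis. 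The main obstacle throughout is the combinatorial bookkeeping of genera and rotation numbers needed to feed Proposition~\ref{sec:main theorem:prop:genus}; the orientation-reversing case is handled precisely because the invariants distinguishing the manifolds in (i) and (ii) are symmetric under orientation reversal, while the Stein adjunction inequality itself is not.
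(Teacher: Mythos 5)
Your treatment of parts (1), (2), (3), (5), and (6) is essentially the paper's argument: the same reduction of (2) to Lemma~\ref{sec:main:lem:genus} and Proposition~\ref{sec:main theorem:prop:genus} by pushing forward the distinguished basis and padding genera with null-homologous surfaces, and the same appeals to Propositions~\ref{prop:relation of attachments} and~\ref{sec:attachment:prop:embed} and Remark~\ref{rem:$X^{(n)}_i$} for the rest. Your handling of the orientation-reversing case (i) via the observation that the minimal genus of a generator of $H_2$ is an invariant of the unoriented manifold, pinned to the disjoint intervals $(g_0+p_{i-1},\,g_0+p_i]$, is a slight (and perfectly valid) variant of the paper, which instead notes that for $m_0\neq 0$ the forms $\langle m_0\rangle$ and $\langle -m_0\rangle$ are non-isomorphic so no orientation-reversing homeomorphism exists at all.

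There is, however, a genuine gap in your proof of (4). You assert that if $X^{(n)}_{-1}$ were Stein, Theorem~\ref{sec:stein:th:closing of Stein} would embed it \emph{$H_2$-injectively} into a simply connected minimal symplectic $4$-manifold with $b_2^+>1$. That theorem guarantees only the existence of an embedding; it says nothing about injectivity on second homology, and embeddings of compact $4$-manifolds into closed ones are in general far from $H_2$-injective (e.g.\ $S^2\times D^2$ embeds in $S^4$). Without injectivity the composite $X\hookrightarrow X^{(n)}_{-1}\hookrightarrow Z$ need not be $H_2$-injective either, so it cannot be ruled out by the hypothesis on $X$. The paper closes this hole with an extra step: before capping off, attach to each $2$-handle of the Stein handlebody presentation of $X^{(n)}_{-1}$ a $2$-handle along a $-2$-framed meridian (chosen Legendrian so the result is still a Stein handlebody), and only then apply Theorem~\ref{sec:stein:th:closing of Stein}. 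In the resulting closed manifold every nonzero class of $H_2(X^{(n)}_{-1})$ has nonzero algebraic intersection with one of the added $-2$-spheres, which forces the map on $H_2$ to be injective. You need this (or some equivalent device) for the non-existence statement in (4) to follow.
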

\begin{proof}Proposition~\ref{prop:relation of attachments} gives $(1)$. For $(3)$, see Remark~\ref{rem:$X^{(n)}_i$}. Proposition~\ref{sec:attachment:prop:embed} gives $(5)$. Construction shows $(6)$.\smallskip


We next check $(2)$. $X^{(n)}_i$ $(-1\leq i\leq n)$ are mutually homeomorphic because they are related to each other by combinations of cork twists. Since $p_i$ $(i\geq 0)$ is a strictly increasing sequence, Lemma~\ref{sec:main:lem:genus} and Proposition~\ref{sec:main theorem:prop:genus} show the first claim. The second claim in the (ii) case also follows from Lemma~\ref{sec:main:lem:genus} and Proposition~\ref{sec:main theorem:prop:genus}. In the case $b_2(X)=1$ and $m_0\neq 0$, there are no orientation-reversing homeomorphisms between them. Hence they cannot be orientation-reversing diffeomorphic.\smallskip

Lastly we show $(4)$. Remark~\ref{rem:$X^{(n)}_i$}.$(5)$ gives the first claim of $(4)$. Suppose that $X^{(n)}_{-1}$ admits a Stein structure. Then $X^{(n)}_{-1}$ admits a Stein handlebody presentation. For every 2-handle of this Stein handlebody, attach a 2-handle along its $-2$-framed meridian so that the result is also a Stein handlebody. This new Stein handlebody can be embedded into a simply connected minimal symplectic $4$-manifold and a minimal complex surface of general type (see Theorem~\ref{sec:stein:th:closing of Stein}). Note that, in each of this closed 4-manifold, the image of every non-zero second homology class of $X^{(n)}_{-1}$ algebraically intersects with a sphere with its self-intersection number $-2$. This fact implies the injectivity of the induced homomorphism between the second homology groups of $X^{(n)}_{-1}$ and the closed manifold. The second claim of $(4)$ thus easily follows.
\end{proof}

For a given embedding of a 4-manifold, applying the algorithm to its complement, we get arbitrary many exotic embeddings. In the following, beware that $Y_0$ in Theorem~\ref{sec:into:th:exotic embedding} corresponds to $Y^{(n)}_{-1}$ and that $Y_i$ $(1\leq i\leq n)$ corresponds to $Y^{(n)}_{i}$. 

\begin{theorem}\label{sec:algorithm1:exotic knottings}Let $Z$ and $Y$ be compact connected oriented smooth $4$-manifolds $($possibly with boundary$)$. Suppose that $Y$ is embedded into $Z$ and that its complement $X:=Z-\textnormal{int}\,Y$ is a $2$-handlebody with $b_2(X)\geq 1$. Fix $n\geq 1$. 
Then, there exist mutually diffeomorphic compact connected oriented smooth $4$-manifolds $Y^{(n)}_i$ $(-1\leq i\leq n)$  embedded into $Z$ with the following properties. \smallskip\\
$(1)$ The pairs $(Z, Y^{(n)}_i)$ $(0\leq i\leq n)$ are mutually homeomorphic but non-diffeomorphic with respect to the given orientations. When either the following \textnormal{(i)} or \textnormal{(ii)} holds, they are mutually non-diffeomorphic for any orientations. The same properties also hold for the pairs $(Z, Y^{(n)}_i)$ $(-1\leq i\leq n,\,i\neq 0)$.\smallskip 
\begin{itemize}
 \item [(i)] $b_2(X)=1$.\smallskip 
 \item [(ii)] The intersection form of $X$ is represented by the zero matrix. \smallskip
\end{itemize}
$(2)$ The fundamental group, the integral homology groups, the integral homology groups of the boundary, and the intersection form of each $Y^{(n)}_i$ $(-1\leq i\leq n)$ are isomorphic to those of $Y$. \smallskip \\
$(3)$ Each complement $X^{(n)}_i:=Z-\textnormal{int}\,Y^{(n)}_i$ $(-1\leq i\leq n)$ is the one in Theorem~\ref{sec:algorithm1:main theorem}, corresponding to the above $X$. 
\end{theorem}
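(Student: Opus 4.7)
The plan is to derive this theorem directly from Theorem~\ref{sec:algorithm1:main theorem} together with Proposition~\ref{sec:attachment:prop:embed}, by running the algorithm of Subsection~\ref{subsec:main_algorithm} on $X := Z-\textnormal{int}\,Y$. Concretely, I would first produce the Legendrian handlebodies $X^{(n)}_i$ $(-1\leq i\leq n)$ via Steps~\ref{step1}--\ref{step5} applied to $X$. By Proposition~\ref{sec:attachment:prop:embed}(1)(i) and~(2)(i), each $X^{(n)}_i$ embeds into $X$ and hence into $Z$, so setting $Y^{(n)}_i := Z-\textnormal{int}\,X^{(n)}_i$ defines the desired candidate family of submanifolds of $Z$. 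Property~(3) then holds by construction.

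To prove the $Y^{(n)}_i$ are mutually diffeomorphic, I would note that as a smooth handlebody (forgetting the zig-zag data, which does not affect the underlying smooth type), each $X^{(n)}_i$ is obtained from $X^{(n)}_0$ by swapping one or several $W^-$-modifications for the corresponding $W^+$-modifications (and vice versa). Proposition~\ref{sec:attachment:prop:embed}(2)(ii) provides, for each such single swap, a diffeomorphism $Z-\textnormal{int}\,X^+\cong Z-\textnormal{int}\,X^-$. Iterating this over all the swaps needed to transform one $X^{(n)}_i$ into another yields $Y^{(n)}_i\cong Y^{(n)}_{i'}$ smoothly for every pair of indices.

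For property~(1), homeomorphisms of pairs $(Z,Y^{(n)}_i)\cong(Z,Y^{(n)}_{i'})$ follow by iterating the pair-level homeomorphism in Proposition~\ref{sec:attachment:prop:embed}(3). Non-diffeomorphism of the pairs is then immediate: any diffeomorphism of pairs would restrict to a diffeomorphism of complements $X^{(n)}_i\cong X^{(n)}_{i'}$, which is precluded by Theorem~\ref{sec:algorithm1:main theorem}(2) under either of the orientation hypotheses. For property~(2), the topological invariants of $Y^{(n)}_i$ coincide with those of $Y$ by repeated application of Proposition~\ref{sec:attachment:prop:embed}(1)(ii), using again that replacing $W^+$ by $W^-$ preserves the complement up to diffeomorphism, so the topological type is propagated unchanged across all the modifications.

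The main obstacle is really bookkeeping: one must fix the embeddings of each $X^{(n)}_i$ into $Z$ consistently (the ones from Proposition~\ref{sec:attachment:prop:embed}(1)(i) combined iteratively with those from~(2)(i)) so that the chain of diffeomorphisms between the $Y^{(n)}_i$ and the chain of pair-homeomorphisms $(Z,Y^{(n)}_i)\cong(Z,Y^{(n)}_{i'})$ are well-defined and mutually compatible. Once these embeddings are pinned down, everything else is a straightforward contrapositive use of Theorem~\ref{sec:algorithm1:main theorem}, with no new adjunction-inequality or handle-calculus input beyond what was already absorbed in that theorem.
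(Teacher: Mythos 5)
Your proposal is correct and follows essentially the same route as the paper: run the algorithm on $X$, use Proposition~\ref{sec:attachment:prop:embed} to embed the $X^{(n)}_i$ into $X\subset Z$ (anchoring everything on the ``all $W^+$'' configuration embedded via (1)(i) and obtaining the others by cork twists via (2)(i)), define $Y^{(n)}_i$ as the complements, and then deduce (1)--(3) from Proposition~\ref{sec:attachment:prop:embed}(1)(ii), (2)(ii), (3) together with Theorem~\ref{sec:algorithm1:main theorem}. The bookkeeping point you flag about fixing compatible embeddings is exactly the one step the paper makes explicit, and you resolve it the same way.
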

\begin{proof}Let $X^{(n)}_i$ $(-1\leq i\leq n)$ denote the manifold in Theorem~\ref{sec:algorithm1:main theorem}, corresponding to the above $X$. Replace every $W^-$-modification of $X^{(n)}_0$ applied in Step~3 with the corresponding $W^+$-modification. As a smooth handlebody, the result of $X^{(n)}_0$ is thus obtained from $X$ only by $W^+$-modifications. Applying Proposition~\ref{sec:attachment:prop:embed}, embed this manifold into $X$. Proposition~\ref{sec:attachment:prop:embed} thus gives an embedding of each $X^{(n)}_i$ $(-1\leq i\leq n)$ into $X$ (and hence $Z$) by twisting $W$'s of the above manifold. Put $Y^{(n)}_i:=Z-\textnormal{int}\,X^{(n)}_i$ $(-1\leq i\leq n)$. Now the required claims easily follow from Proposition~\ref{sec:attachment:prop:embed} and Theorem~\ref{sec:algorithm1:main theorem}. 
\end{proof}
We now have Theorems~\ref{sec:intro:th:main} and \ref{sec:into:th:exotic embedding} and Corollary~\ref{sec:main:cor:cork:compact case}. 
\begin{proof}[Proofs of Theorems~\ref{sec:intro:th:main} and \ref{sec:into:th:exotic embedding} and Corollary~\ref{sec:main:cor:cork:compact case}] These clearly follow from Theorems~\ref{sec:algorithm1:main theorem} and \ref{sec:algorithm1:exotic knottings}. 
\end{proof}

\begin{remark}$(1)$ Suppose that $\partial X$ is diffeomorphic to $\#_m S^1\times S^2$ $(m\geq 0)$. Since $\#_m S^1\times S^2$ has a unique Stein filling, each $\partial X^{(n)}_i$ is not diffeomorphic to $\partial X$, so in particular, each $X^{(n)}_i$ is not homeomorphic to the original $X$, in this case. 
\smallskip \\
$(2)$ By more restricting the conditions of $p_i$ $(i\geq 1)$ in Definition~\ref{sec:the algorithm: def: p_i}, we can easily show the following: $X^{(n)}_0$ produces $2^{n}-1$ mutually homeomorphic but non-diffeomorphic compact Stein $4$-manifolds by natural combinations of cork twists. 
\end{remark}
\begin{remark}[Variants of the construction] There are many variants of the construction, here we  remark just a few of them.\smallskip \\
$(1)$ We can cut the condition (ii) of $q_j$ in Definition~\ref{sec:main:def:q_j}, by choosing each $p_i$ $(i\geq 1)$ larger. In this case, we use flexibility of zig-zag operations of $K_0$ and $\gamma_i$, instead of $\delta_j$. Namely, we equip each $X^{(n)}_i$ $(1\leq i\leq n)$ with two Stein structures so that $\langle c_1(X^{(n)}_i), v^{(i)}_0 \rangle$ takes two different values, namely, a large positive number and a large negative number. This makes us possible to apply similar adjunction inequality arguments. \smallskip \\
$(2)$ Though we used only $W_1$ for the construction, many other corks (e.g.\ $W_n$ of \cite{AY1}) also work. Taking band sums with knots with sufficiently large Thurston-Bennequin numbers are also helpful. We can use band sum operations in Step~\ref{step2}, instead of $W^+$-modifications, though Theorem~\ref{sec:algorithm1:main theorem}.(4) and Theorem~\ref{sec:algorithm1:exotic knottings} are not guaranteed in this case. 
\end{remark}
\begin{remark}Once we apply Step~1 to any given $X$ and calculate the data of $X$ in Definition~\ref{sec:main:def:main data}, we immediately get a (usually large) smooth handle diagram of each $X^{(n)}_i$, as shown in Figures~\ref{fig11}--\ref{fig13}. See Subsection~\ref{subsec:simplestexample}, for the simplest case. Though we can also immediately get a Legendrian (Stein) handlebody diagram of each $X^{(n)}_i$, it usually a very large diagram. 
\end{remark}
\section{Strengthening the construction}\label{sec:variant}

In Section~\ref{sec:main_algorithm}, we did not completely exclude the possibility that some of $X^{(n)}_i$'s are orientation-reversing diffeomorphic, because the argument was simplified and that the conditions of $p_i$'s in Definition~\ref{sec:the algorithm: def: p_i} were relaxed. 
In this section, we exclude this possibility by restricting the conditions of $p_i$'s. We use the same symbols as in Section~\ref{sec:main_algorithm}.
\smallskip
\begin{definition}\label{sec:variant:def}Let $X$ be any $2$-handlebody with $b_2\geq 1$. Fix $n\geq 1$. \smallskip\\
$(1)$ In the $b_2(X)=1$ case, put $\widehat{X}^{(n)}_i:=X^{(n)}_i$ $(-1\leq i\leq n)$, where $X^{(n)}_i$ are the manifolds as in Theorem~\ref{sec:algorithm1:main theorem}. \smallskip\\
$(2)$ In the $b_2(X)\geq 2$ case, assume that $p_i$'s $(i\geq -1)$ in Definition~\ref{sec:the algorithm: def: p_i} further satisfy the following conditions (v) and (vi). Then put $\widehat{X}^{(n)}_i:=X^{(n)}_i$ $(-1\leq i\leq n)$. \smallskip
\begin{itemize}
\item  [(v)] $2p_1+(t_0-1)-m_0+\rvert r_0\lvert -m_j> 2(g_j+q_j)-2$, for each $0\leq j\leq k$.\smallskip
\item  [(vi)] $2p_i+(t_0-1)-2m_0+\rvert r_0\lvert> 2(g_0+p_{i-1})-2$, for each $i\geq 1$. \smallskip
\end{itemize}
\end{definition}\smallskip

Let $v^{(i)}_j$ $(1\leq j\leq k)$ denote the basis of $H_2(\widehat{X}^{(n)}_{i};\mathbf{Z})$ in Definition~\ref{sec:algorithm:def:v}. Since we defined $\widehat{X}^{(n)}_i$ as a special case of $X^{(n)}_i$, the same properties as in Lemma~\ref{sec:main:lem:genus} hold. For Proposition~\ref{sec:main theorem:prop:genus}, we can easily get the following stronger claim. 
\begin{proposition}\label{sec:variant:prop:non-existence}Fix $n\geq 1$. For any $1\leq i\leq n$, there exists no basis $u_0,u_1,\dots,u_k$ of $H_2(\widehat{X}^{(n)}_i;\mathbf{Z})$ which satisfies the following conditions $($ignore \textnormal{(ii)} when $b_2(X)=1$$)$.\smallskip
\begin{itemize}
 \item [(i)] $u_0$ is represented by a smoothly embedded surface with its genus equal to or less than $g_0+p_{i-1}$ and satisfies $\lvert u^2_0\rvert =\lvert m_0\rvert $.\smallskip
 \item [(ii)] Each $u_j$ $(1\leq j\leq k)$ is represented by a smoothly embedded  surface of genus $g_j+q_j$ and satisfies $\lvert u^2_j\rvert =\lvert m_j\rvert $.
\end{itemize}
\end{proposition}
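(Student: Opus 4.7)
The plan is to adapt the proof of Proposition~\ref{sec:main theorem:prop:genus} in a case-by-case fashion, handling the new possibility $u_j^2 = -m_j$ that the weakened hypothesis $|u_j^2| = |m_j|$ allows. Lemmas~\ref{sec:main:lem:genus} and \ref{lem:change of stein structure}, together with the adjunction inequality on the Stein manifold $\widehat{X}^{(n)}_i$, remain the main analytic inputs.

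First I would fix $1 \leq i \leq n$ and, arguing by contradiction, assume a basis $u_0,\dots,u_k$ exists. As before, one can assume the genus of $u_0$ equals $g_0 + p_{i-1}$ by taking a connected sum with a null-homologous surface, and write $u_j = \sum_{h=0}^{k} a^{(j)}_h v^{(i)}_h$. By Lemma~\ref{lem:change of stein structure}, for each $j$ I may choose a Stein structure making the pairing $|\langle c_1, u_j\rangle|$ bounded below by $|a^{(j)}_0|(2p_i + (t_0-1) - m_0 + |r_0|) + \sum_{h\geq 1} |a^{(j)}_h \widehat{q}_h|$. Plugging this into the adjunction inequality
\[
 2g(u_j) - 2 \;\geq\; u_j^2 + |\langle c_1, u_j\rangle|
\]
will produce the bounds needed to kill $a^{(j)}_0$ for every $j$.

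The new ingredient is the sign split. For the bound coming from $u_0$: if $u_0^2 = m_0$, the argument in Proposition~\ref{sec:main theorem:prop:genus}, using condition (iv) of Definition~\ref{sec:the algorithm: def: p_i}, shows $a^{(0)}_0 = 0$; if instead $u_0^2 = -m_0$, substituting $-m_0$ for $m_0$ in the same calculation replaces the relevant quantity by $2p_i + (t_0-1) - 2m_0 + |r_0|$, and the newly imposed condition (vi) of Definition~\ref{sec:variant:def} again forces $a^{(0)}_0 = 0$. Analogously, for each $1 \leq j \leq k$, the case $u_j^2 = m_j$ is handled by the original condition (iii) (combined with $p_i \geq p_1$), while the case $u_j^2 = -m_j$ is handled by the new condition (v): in both cases one concludes that $(|a^{(j)}_0| - 1)(2p_i + (t_0-1) - m_0 + |r_0|)$ is strictly negative, whence $a^{(j)}_0 = 0$.

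Collecting these vanishings yields $a^{(0)}_0 = a^{(1)}_0 = \cdots = a^{(k)}_0 = 0$, contradicting the fact that $u_0,\dots,u_k$ is a basis of $H_2(\widehat{X}^{(n)}_i;\mathbf{Z})$ (whose $v^{(i)}_0$ component would then be identically zero). The only real obstacle is bookkeeping: one must verify that the same Stein structure-adjusted lower bound from Lemma~\ref{lem:change of stein structure} is strong enough in the reversed-orientation case $u_j^2 = -m_j$, which is exactly what conditions (v) and (vi) were designed to guarantee. No genuinely new geometric input is needed beyond these strengthened inequalities.
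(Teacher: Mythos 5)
Your proposal is correct and is exactly the argument the paper intends: rerun the proof of Proposition~\ref{sec:main theorem:prop:genus}, splitting into the cases $u_j^2=\pm m_j$, with the original conditions (iii), (iv) covering $u_j^2=m_j$ and the newly imposed conditions (v), (vi) of Definition~\ref{sec:variant:def} covering $u_j^2=-m_j$, so that in every case $(\lvert a^{(j)}_0\rvert-1)(2p_i+(t_0-1)-m_0+\lvert r_0\rvert)<0$ forces $a^{(j)}_0=0$. The paper leaves this verification to the reader, and your bookkeeping fills it in correctly.
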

Using this proposition, we easily get the following strengthened theorems. 
\begin{theorem}\label{sec:variant:main theorem}Let $X$ be any $2$-handlebody with $b_2(X)\geq 1$. Fix $n\geq 1$. Let $\widehat{X}^{(n)}_i$ $(-1\leq i\leq n)$ denote the corresponding Legendrian handlebody in Definition~\ref{sec:variant:def}. Then the following properties hold. \smallskip \\
$(1)$ $\widehat{X}^{(n)}_i$ $(0\leq i\leq n)$ are mutually homeomorphic, but mutually non-diffeomorphic for any orientations. The same property also holds for $\widehat{X}^{(n)}_i$ $(-1\leq i\leq n,\, i\neq 0)$. \smallskip \\
$(2)$ $\widehat{X}^{(n)}_i$ $(-1\leq i\leq n)$ has the same properties as those of $X^{(n)}_i$ $(-1\leq i\leq n)$ in Theorem~\ref{sec:algorithm1:main theorem}.
\end{theorem}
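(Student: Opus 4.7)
The plan is to derive Theorem~\ref{sec:variant:main theorem} as a direct strengthening of Theorem~\ref{sec:algorithm1:main theorem}, using Proposition~\ref{sec:variant:prop:non-existence} in place of Proposition~\ref{sec:main theorem:prop:genus}. Since $\widehat{X}^{(n)}_i$ is defined by merely tightening the size conditions on the integer sequence $p_i$ (adding conditions (v) and (vi)) while leaving the construction otherwise unchanged, item (2) follows verbatim from the arguments proving Theorem~\ref{sec:algorithm1:main theorem}: none of those arguments — the computations of the fundamental group, homology, boundary homology, intersection form, the existence of Stein structures, the embedding of $X$ into $\widehat{X}^{(n)}_{-1}$, the embeddings of each $\widehat{X}^{(n)}_i$ into $X$, or the cork structure — depends on the precise values of the $p_i$.

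For item (1), mutual homeomorphism of the $\widehat{X}^{(n)}_i$ in either range follows from the fact that they are related by combinations of cork twists (Proposition~\ref{sec:attachment:prop:embed}(3) and the construction). To rule out orientation-preserving and orientation-reversing diffeomorphisms simultaneously, suppose that some diffeomorphism (of either orientation) exists from $\widehat{X}^{(n)}_{i'}$ to $\widehat{X}^{(n)}_i$ with $i' < i$ in the given range. Transport the basis $v^{(i')}_0, v^{(i')}_1, \ldots, v^{(i')}_k$ of $H_2(\widehat{X}^{(n)}_{i'}; \mathbf{Z})$ supplied by Lemma~\ref{sec:main:lem:genus} across this diffeomorphism to obtain a basis $u_0, u_1, \ldots, u_k$ of $H_2(\widehat{X}^{(n)}_i; \mathbf{Z})$. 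Genera of embedded representatives are diffeomorphism invariants, so $u_0$ is represented by a surface of genus $g_0 + p_{i'} \leq g_0 + p_{i-1}$, and each $u_j$ ($1 \leq j \leq k$) by one of genus $g_j + q_j$. Self-intersection numbers are preserved up to sign (being negated only under orientation-reversal), hence $\lvert u_0^2 \rvert = \lvert m_0 \rvert$ and $\lvert u_j^2 \rvert = \lvert m_j \rvert$. This contradicts Proposition~\ref{sec:variant:prop:non-existence}, so no such diffeomorphism exists.

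The main obstacle will be the proof of Proposition~\ref{sec:variant:prop:non-existence} itself, which drives the whole argument. The proof mimics that of Proposition~\ref{sec:main theorem:prop:genus}: one writes each $u_j = \sum_{\ell} a^{(j)}_\ell v^{(i)}_\ell$, applies the adjunction inequality together with Lemma~\ref{lem:change of stein structure} (for a suitable choice of Stein structure on $\widehat{X}^{(n)}_i$), and aims to conclude $a^{(j)}_0 = 0$ for every $j$, in contradiction to the basis hypothesis. The subtlety now is that the self-intersection term $u_j^2$ appearing in the adjunction inequality may equal $-m_j$ rather than $+m_j$, shifting the right-hand side by an extra $2m_j$. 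The strengthened conditions (v) and (vi) on $p_i$ are designed precisely to absorb this shift: condition (vi) guarantees that the analogous bound for $u_0$ still forces $\lvert a^{(0)}_0\rvert < 1$, while condition (v) does the same for $\lvert a^{(j)}_0\rvert$ with $j \geq 1$. Beyond this change of constants, the remainder of the argument proceeds exactly as in Proposition~\ref{sec:main theorem:prop:genus}.
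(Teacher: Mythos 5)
Your proposal is correct and follows exactly the route the paper intends: the paper's entire proof is the remark that Proposition~\ref{sec:variant:prop:non-existence} (substituted for Proposition~\ref{sec:main theorem:prop:genus}) yields the strengthened conclusion, and your transported-basis argument with $\lvert u_j^2\rvert=\lvert m_j\rvert$ plus the observation that conditions (v) and (vi) absorb the possible sign flip of the self-intersection term in the adjunction inequality is precisely the content being left implicit. Your verification that item (2) is unaffected by enlarging the $p_i$ also matches the paper.
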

\begin{theorem}\label{sec:variant:exotic knottings}Let $Z$ and $Y$ be compact connected oriented smooth $4$-manifolds $($possibly with boundary$)$. Suppose that $Y$ is embedded into $Z$ and that its complement $X:=Z-\textnormal{int}\,Y$ is a $2$-handlebody with $b_2(X)\geq 1$. Fix $n\geq 1$. 
Then, there exist mutually diffeomorphic compact connected oriented smooth $4$-manifolds $\widehat{Y}^{(n)}_i$ $(-1\leq i\leq n)$  embedded into $Z$ with the following properties. \smallskip\\
$(1)$ The pairs $(Z, \widehat{Y}^{(n)}_i)$ $(0\leq i\leq n)$ are mutually homeomorphic, but mutually non-diffeomorphic with any orientations. The same property also holds for the pairs $(Z, \widehat{Y}^{(n)}_i)$ $(-1\leq i\leq n,\,i\neq 0)$.\smallskip \\
$(2)$ The fundamental group, the integral homology groups, the integral homology groups of the boundary, and the intersection form of every $\widehat{Y}^{(n)}_i$ $(-1\leq i\leq n)$ are isomorphic to those of $Y$. \smallskip \\
$(3)$ Each complement $\widehat{X}^{(n)}_i:=Z-\textnormal{int}\,\widehat{Y}^{(n)}_i$ $(-1\leq i\leq n)$ is as in the Theorem~\ref{sec:variant:main theorem}, corresponding to the above $X$. 
\end{theorem}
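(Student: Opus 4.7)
The plan is to mirror the proof of Theorem~\ref{sec:algorithm1:exotic knottings} verbatim, but feed in the strengthened construction of Section~\ref{sec:variant}. No genuinely new geometric input is needed: all the heavy lifting has already been done in Theorem~\ref{sec:variant:main theorem} and Proposition~\ref{sec:attachment:prop:embed}; the statement here is just the ``relative'' rephrasing obtained by passing to complements.

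First, I would apply the algorithm of Section~\ref{sec:main_algorithm} to the complement $X := Z - \textnormal{int}\,Y$, but with the integer sequence $\{p_i\}$ chosen to satisfy not only Definition~\ref{sec:the algorithm: def: p_i} but also the extra conditions (v) and (vi) of Definition~\ref{sec:variant:def}. This yields the Legendrian handlebodies $\widehat{X}^{(n)}_i$ ($-1 \leq i \leq n$) of Theorem~\ref{sec:variant:main theorem}. Next, to locate each $\widehat{X}^{(n)}_i$ inside $Z$, I would, exactly as in the proof of Theorem~\ref{sec:algorithm1:exotic knottings}, replace every $W^-$-modification of $K_0$ applied in Step~\ref{step3} by the corresponding $W^+$-modification. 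The resulting smooth $2$-handlebody is obtained from $X$ solely by $W^+$-modifications, and so embeds into $X$ by iterated application of Proposition~\ref{sec:attachment:prop:embed}(1)(i); cork twisting the $W$-corks via Proposition~\ref{sec:attachment:prop:embed}(2)(i) then produces embeddings of each $\widehat{X}^{(n)}_i$ into $X \subset Z$. Define $\widehat{Y}^{(n)}_i := Z - \textnormal{int}\,\widehat{X}^{(n)}_i$.

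I would then verify the three conclusions in order. For mutual diffeomorphism of the $\widehat{Y}^{(n)}_i$: consecutive $\widehat{X}^{(n)}_i$'s differ by cork twists, so Proposition~\ref{sec:attachment:prop:embed}(2)(ii) identifies their complements. For (1), the pairs $(Z,\widehat{Y}^{(n)}_i)$ are mutually homeomorphic by Proposition~\ref{sec:attachment:prop:embed}(3); any diffeomorphism of pairs, with either orientation, would restrict to a diffeomorphism (of the appropriate orientation) between the complements $\widehat{X}^{(n)}_i$, contradicting Theorem~\ref{sec:variant:main theorem}(1). For (2), the invariants of $\widehat{Y}^{(n)}_i$ coincide with those of $Y$ by iterating Proposition~\ref{sec:attachment:prop:embed}(1)(ii) (cork twists do not move the complement, and each $W^{\pm}$-modification leaves the fundamental group, the integral homology, the boundary homology, and the intersection form of the complement unchanged). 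Finally, (3) holds by construction in conjunction with Theorem~\ref{sec:variant:main theorem}.

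The only point where care is required is the ``with any orientation'' clause in (1). This is precisely what the strengthened genus estimate Proposition~\ref{sec:variant:prop:non-existence} gives us: the absolute value formulation $|u_j^{2}| = |m_j|$ rules out bases in $H_2(\widehat{X}^{(n)}_i;\mathbf{Z})$ that an orientation-reversing diffeomorphism of complements would produce, so Theorem~\ref{sec:variant:main theorem}(1) handles both orientations simultaneously. Thus no new obstacle arises beyond what has already been absorbed into the strengthened intermediate results, and the theorem reduces to a bookkeeping argument combining Proposition~\ref{sec:attachment:prop:embed} with Theorem~\ref{sec:variant:main theorem}.
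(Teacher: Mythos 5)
Your proposal is correct and follows essentially the same route as the paper, which obtains this theorem by repeating the proof of Theorem~\ref{sec:algorithm1:exotic knottings} with the strengthened manifolds $\widehat{X}^{(n)}_i$ of Definition~\ref{sec:variant:def}: replace the $W^-$-modifications to $K_0$ by $W^+$-modifications, embed into $X\subset Z$ via Proposition~\ref{sec:attachment:prop:embed}, set $\widehat{Y}^{(n)}_i:=Z-\textnormal{int}\,\widehat{X}^{(n)}_i$, and deduce the claims from Proposition~\ref{sec:attachment:prop:embed} and Theorem~\ref{sec:variant:main theorem}. Your added remarks on the orientation-reversing case via Proposition~\ref{sec:variant:prop:non-existence} are consistent with what the paper leaves implicit.
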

\section{The contact structures on the boundary}\label{sec:contact}
In this section, we discuss the induced contact structures on the boundary $\partial X^{(n)}_i$ in the $b_2(X)=1$ case. We use the same symbols as in Section~\ref{sec:main_algorithm}. 
\begin{definition}Let $X$ be any $2$-handlebody with $b_2(X)=1$. Assume that the intersection form of $X$ is non-zero $($i.e.\ $m_0\neq 0$$)$. Fix $n\geq 1$. Let $X^{(n)}_i$ and $v^{(i)}_0$$(1\leq i\leq n)$ denote the corresponding Stein handlebody in Theorem~\ref{sec:algorithm1:main theorem} and the generator of $H_2(X^{(n)}_i;\mathbf{Z})$ in Definition~\ref{sec:algorithm:def:v}, respectively.  Let $\xi^{(n)}_i$ $(1\leq i\leq n)$ be the contact structure on $\partial X^{(n)}_i$ induced by the Stein structure on $X^{(n)}_i$.  
\end{definition}
\begin{lemma}$d_3(\xi^{(n)}_i)$ $(1\leq i\leq n)$ are mutually different. 
\end{lemma}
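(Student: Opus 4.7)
The plan is to read off $d_3(\xi^{(n)}_i)$ directly from the formula
$$d_3(\xi^{(n)}_i)=\tfrac14\bigl(c_1(X^{(n)}_i)^2-2e(X^{(n)}_i)-3\sigma(X^{(n)}_i)\bigr)$$
and to show that only the $c_1^2$-term depends on $i$, in a strictly monotonic way. First I would argue that $e(X^{(n)}_i)$ and $\sigma(X^{(n)}_i)$ are independent of $i$: by Theorem~\ref{sec:algorithm1:main theorem}(1) the integral homology groups and intersection form of $X^{(n)}_i$ are the same for all $i$, and since each $X^{(n)}_i$ is a $2$-handlebody (so $b_3=b_4=0$), this pins down both $e$ and $\sigma$. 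Consequently, distinguishing the $d_3(\xi^{(n)}_i)$ reduces to distinguishing the $c_1(X^{(n)}_i)^2$.

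The heart of the argument is the computation of $c_1(X^{(n)}_i)^2$. Since $b_2(X^{(n)}_i)=1$ and $(v^{(i)}_0)^2=m_0\neq 0$, Lemma~\ref{sec:stein:lem:contact} gives
$$c_1(X^{(n)}_i)^2=\frac{\langle c_1(X^{(n)}_i),v^{(i)}_0\rangle^2}{m_0}.$$
Using the representative $v^{(i)}_0=[K_0-p_i\gamma_i]$ and Theorem~\ref{sec:stein:th:stein_rotation}, the pairing on the right equals $r(K_0)-p_i\,r(\gamma_i)$. A short case check based on the data prescribed in Step~\ref{step5}, namely $|r(K_0)|=p_i+(t_0-1)-m_0+|r_0|$, $|r(\gamma_i)|=1$, and the sign compatibility between the two rotation numbers, should yield
$$|\langle c_1(X^{(n)}_i),v^{(i)}_0\rangle|=2p_i+(t_0-1)-m_0+|r_0|.$$

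Finally, since $(p_i)$ is strictly increasing by Definition~\ref{sec:the algorithm: def: p_i}(i) and the expression above is nonnegative by Definition~\ref{sec:the algorithm: def: p_i}(ii), the sequence $|\langle c_1(X^{(n)}_i),v^{(i)}_0\rangle|$ is strictly increasing in $i$; squaring and dividing by the fixed nonzero $m_0$ preserves strict monotonicity, so the $c_1(X^{(n)}_i)^2$, and hence the $d_3(\xi^{(n)}_i)$, are mutually distinct. The only mild delicacy I anticipate is the sign bookkeeping when $r(K_0)=0$, which can happen for at most one index and is handled by choosing the sign of $r(\gamma_i)$ freely; I do not expect a substantive obstacle beyond that routine case check.
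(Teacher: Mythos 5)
Your proposal is correct and follows essentially the same route as the paper: the paper likewise reduces the claim to the strict monotonicity of $\lvert\langle c_1(X^{(n)}_i),v^{(i)}_0\rangle\rvert=2p_i+(t_0-1)-m_0+\lvert r_0\rvert$ and then invokes Lemma~\ref{sec:stein:lem:contact}, the only difference being that it cites the equality case of Lemma~\ref{lem:change of stein structure} (for $k=0$) where you recompute the pairing directly from the data in Step~\ref{step5}. Your extra remarks on the constancy of $e$ and $\sigma$ and on the sign bookkeeping are accurate and harmless.
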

\begin{proof}Lemma~\ref{lem:change of stein structure} shows $\lvert \langle c_1(X^{(n)}_i), v^{(i)}_0 \rangle\rvert=2p_i+t_0-1-m_0+\lvert r_0\rvert$. The value $\lvert \langle c_1(X^{(n)}_i), v^{(i)}_0 \rangle\rvert$ hence strictly increases when $i$ increases. Lemma~\ref{sec:stein:lem:contact} thus show the claim. 
\end{proof}
The following proposition gives Corollary~\ref{sec:intro:cor:contact}. 
\begin{proposition}Fix $n\geq 1$. Each smooth $4$-manifold $X^{(n)}_i$ $(1\leq i\leq n)$ admits no Stein structure compatible with $\xi^{(n)}_j$ for any $j$ with $i<j\leq n$. 
\end{proposition}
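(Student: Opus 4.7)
The plan is to argue by contradiction, in the same spirit as the preceding lemma on the $d_3$ invariants. Suppose that for some pair $i<j$ with $1\leq i<j\leq n$ there exists a Stein structure $J'$ on the smooth $4$-manifold $X^{(n)}_i$ whose induced contact structure on $\partial X^{(n)}_i\cong \partial X^{(n)}_j$ coincides with $\xi^{(n)}_j$. First, I would invoke the contact invariance of $d_3$ (under the torsion hypothesis on the boundary Chern class that was already implicit in the preceding lemma), together with the fact that the $X^{(n)}_k$ are mutually homeomorphic (so $e$ and $\sigma$ coincide), to conclude from the defining formula for $d_3$ that
\[
c_1(X^{(n)}_i,J')^2 \;=\; c_1(X^{(n)}_j,J)^2,
\]
where $J$ denotes the Stein structure on $X^{(n)}_j$ produced by the construction.

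The next step is to convert this equality of squares into a statement about the pairing with the generator of $H_2$. By Lemma~\ref{sec:main:lem:genus}.(1) the classes $v^{(i)}_0$ and $v^{(j)}_0$ are generators with $v^{(i)}_0\cdot v^{(i)}_0 = m_0 = v^{(j)}_0\cdot v^{(j)}_0\neq 0$, so Lemma~\ref{sec:stein:lem:contact} applied to both sides yields
\[
\lvert\langle c_1(X^{(n)}_i,J'),v^{(i)}_0\rangle\rvert \;=\; \lvert\langle c_1(X^{(n)}_j,J),v^{(j)}_0\rangle\rvert,
\]
and by the $k=0$ case of Lemma~\ref{lem:change of stein structure}, already used in the preceding lemma, the right-hand side equals $2p_j+(t_0-1)-m_0+\lvert r_0\rvert$.

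The final step is to apply the adjunction inequality in the compact Stein $4$-manifold $(X^{(n)}_i,J')$ to the class $v^{(i)}_0$. By Lemma~\ref{sec:main:lem:genus}.(1) it is represented by a smoothly embedded closed surface of genus $g_0+p_i$ with self-intersection $m_0$, and is nonzero since it generates $H_2(X^{(n)}_i;\mathbf{Z})\cong\mathbf{Z}$. The inequality, combined with the value computed in the previous step, simplifies after cancellation of $m_0$ to
\[
2p_j+(t_0-1)+\lvert r_0\rvert \;\leq\; 2(g_0+p_i)-2.
\]
But $j\geq i+1$ and the sequence $\{p_k\}$ is strictly increasing by Definition~\ref{sec:the algorithm: def: p_i}.(i), so $p_j\geq p_{i+1}$; applying condition~(iv) of Definition~\ref{sec:the algorithm: def: p_i} at index $i+1$ gives $2p_{i+1}+(t_0-1)+\lvert r_0\rvert>2(g_0+p_i)-2$, contradicting the displayed inequality.

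The main (and essentially only) delicate point is the opening step, namely extracting $c_1^2$-equality from the coincidence of induced contact structures on the boundary; this reduces to the contact invariance of $d_3$, which is valid once $c_1$ of the boundary contact structure is torsion, the very hypothesis implicitly used to state the preceding lemma. Everything else is routine, since the size conditions (i) and (iv) of Definition~\ref{sec:the algorithm: def: p_i} were designed precisely to defeat an adjunction bound of this shape on $v^{(i)}_0$.
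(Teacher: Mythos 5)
Your proposal is correct and follows essentially the same route as the paper: equate $d_3$ (hence $c_1^2$, hence $\lvert\langle c_1, v_0\rangle\rvert$ via Lemma~\ref{sec:stein:lem:contact}) and then contradict the adjunction inequality applied to $v^{(i)}_0$, which has a genus-$(g_0+p_i)$ representative, using the size conditions on the $p_i$. The only cosmetic difference is that you carry out the inequality manipulation explicitly where the paper defers to the proof of Proposition~\ref{sec:main theorem:prop:genus}.
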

\begin{proof}Suppose that some $X^{(n)}_{i}$ admits a Stein structure $J$ compatible with $\xi^{(n)}_{j}$ for some $j$ with $i<j\leq n$. Then the corresponding first Chern class $c_1(X^{(n)}_{i}, J)$ satisfies $\lvert \langle c_1(X^{(n)}_{i}, J), v^{(i)}_0 \rangle\rvert=\lvert \langle c_1(X^{(n)}_{j}), v^{(j)}_0 \rangle\rvert$, because $d_3(\xi^{(n)}_{j})$ is determined by this value (see Lemma~\ref{sec:stein:lem:contact}). The adjunction inequality for the Stein $4$-manifold $(X^{(n)}_{i}, J)$ thus easily shows that $v^{(i)}_0$ cannot be represented by any smoothly embedded surface with genus less than or equal to $g_0+p_{j-1}$ (see the proof of Proposition~\ref{sec:main theorem:prop:genus}). Since $p_{j-1}\geq p_{i}$,  this fact contradicts the fact that $v^{(i)}_0$ is represented by a surface of genus $g_0+p_{i}$ (see Lemma~\ref{sec:main:lem:genus}). 
\end{proof}

\section{Infinitely many disjoint corks in noncompact 4-manifolds}\label{sec:noncompact}
In~\cite{AY4}, we constructed infinitely many disjoint embeddings of the cork $W_1$ into a simply connected noncompact smooth $4$-manifold so that this noncompact 4-manifold produces infinitely many different exotic smooth structures by twisting the each copy of $W_1$. The second betti number of the noncompact 4-manifold is infinite. In this section, we construct such noncompact 4-manifolds for any finite second betti number larger than zero. Namely, we prove the following. 

\begin{theorem}\label{sec:noncompact:mainthm}Let $X$ be any compact $2$-handlebody with $b_2(X)\geq 1$. Then, there exist infinitely many noncompact $4$-manifolds $X_i$ $(i\geq 0)$ and infinitely many disjointly embedded copies $C_i$ $(i\geq 1)$ of $W_1$ into $X_0$ with the following properties.\smallskip\\
$(1)$ Each $X_i$ $(i\geq 1)$ is the cork twist of $X_0$ along $(C_i,f_1)$.\smallskip\\
$(2)$ $X_i$ $(i\geq 0)$ are mutually homeomorphic but not diffeomorphic. \smallskip\\
$(3)$ The fundamental group, the integral homology groups, and the intersection form of every $X_i$ $(i\geq 0)$ are isomorphic to those of $X$.\smallskip\\
$(4)$ Each $X_i$ $(i\geq 0)$ can be embedded into $X$. 
\end{theorem}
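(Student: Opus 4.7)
The plan is to apply the algorithm of Section~\ref{sec:main_algorithm} with an infinite sequence of $W^-$-modifications of $K_0$ and pass to a direct limit. First apply Steps~\ref{step1} and~\ref{step2} to produce $\widehat{X}$ with the data of Definitions~\ref{sec:main:def:main data} and~\ref{sec:main:def:q_j}, and choose an infinite strictly increasing sequence $p_1<p_2<\cdots$ of non-negative integers simultaneously satisfying the conditions of Definition~\ref{sec:the algorithm: def: p_i} (each condition is a lower bound depending only on earlier $p_j$, so this is possible). For each $n\geq 0$, let $X_0^{(n)}$ be the Legendrian handlebody obtained from $\widehat{X}$ by applying $W^-(p_1),\dots,W^-(p_n)$-modifications to $K_0$ along mutually disjoint arcs of its attaching circle, yielding natural inclusions $X_0^{(n)}\hookrightarrow X_0^{(n+1)}$. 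Define $X_0:=\bigcup_{n\geq 0}X_0^{(n)}$. The $i$-th modification creates a cork $C_i\cong W_1$ supported in the $i$-th arc region, and by the disjoint-arc arrangement the $C_i$'s are mutually disjoint in $X_0$. Define $X_i$ as the cork twist of $X_0$ along $(C_i,f_1)$; since cork twists are local, $X_i=\bigcup_{n\geq 0}X_i^{(n)}$ with $X_i^{(n)}$ as in Definition~\ref{sec:the algorithm:def:X}. Property~(1) is built in.

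Property~(3) follows from Proposition~\ref{prop:relation of attachments}: every $X_i^{(n)}$ has the same $\pi_1$, $H_\ast$, and intersection form as $X$, each inclusion $X_i^{(n)}\hookrightarrow X_i^{(n+1)}$ induces isomorphisms on these invariants (the added $1$-/$2$-handle pair cancels algebraically), and these invariants commute with direct limits of CW complexes. For Property~(4), Proposition~\ref{sec:attachment:prop:embed} embeds each $X_i^{(n)}$ into $X$ with complement a disjoint union of $n$ cancelling $(2,3)$-handle pairs; by placing the $(n+1)$-st pair inside the region allotted to it (disjoint from the previous pairs), one obtains compatible embeddings $\iota_n\colon X_i^{(n)}\hookrightarrow X$ satisfying $\iota_{n+1}|_{X_i^{(n)}}=\iota_n$, whose direct limit is the desired embedding $X_i\hookrightarrow X$ (with cancelling regions accumulating near a prescribed point of $X$).

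For Property~(2), all $X_i$ are mutually homeomorphic because $f_1$ extends to a self-homeomorphism of $W_1$ by Freedman's theorem (cf.\ the remark in Section~\ref{sec:cork}), so cork twists preserve homeomorphism type. To prove non-diffeomorphism: for each $i\geq 1$, the Stein handlebody structures on $X_i^{(n)}$ from Step~\ref{step5} can be chosen coherently in $n$, because each subsequent $W^-(p_{n+1})$-modification preserves the framing and Thurston-Bennequin number of $K_0$ and is itself Stein-compatible; they assemble into a Stein structure on the noncompact $X_i$. Any smoothly embedded closed surface in $X_i$ is compact and therefore lies in some $X_i^{(N)}$, so Lemma~\ref{sec:main:lem:genus} and the adjunction-inequality argument of Proposition~\ref{sec:main theorem:prop:genus} apply verbatim inside $X_i^{(N)}\subset X_i$. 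Consequently the generator $v_0^{(i)}\in H_2(X_i;\mathbf{Z})$ has a genus-$(g_0+p_i)$ representative but no representative of genus $\leq g_0+p_{i-1}$ when $i\geq 1$, whereas for $i=0$ the generator $v_0^{(0)}$ already has a genus-$g_0$ representative (for which no Stein structure on $X_0$ is needed). A diffeomorphism $X_i\cong X_j$ with $i<j$ would transport the nice basis of $H_2(X_i)$ of Lemma~\ref{sec:main:lem:genus} to a basis of $H_2(X_j)$ of the same genera, contradicting Proposition~\ref{sec:main theorem:prop:genus} in the Stein manifold $X_j$ (since $p_i\leq p_{j-1}$). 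The main technical point is this limiting adjunction argument: it works because the locality of the $W^{\pm}$-modifications lets both the Stein structure and the minimum-genus bounds descend from the compact exhaustion to the noncompact union.
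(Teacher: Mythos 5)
Your construction is the paper's: the same nested sequence of $W^{-}(p_1),W^{-}(p_2),\dots$-modifications of $K_0$, the same inductive limit $X_0$, and the same cork twists to define $X_i$; properties (1), (3), (4) are handled essentially as in the paper. Where you genuinely diverge is in transferring the adjunction argument to the noncompact setting. The paper never puts a Stein structure on the noncompact manifold: it observes that $X_i=\widehat{X}^{(\infty)}_i$ is obtained from the \emph{compact} Stein manifold $\widehat{X}^{(i+1)}_i$ by infinitely many further $W^{-}$-modifications, hence by Proposition~\ref{sec:attachment:prop:embed} embeds into that single compact Stein manifold with an isomorphism on $H_2$, and then runs the genus estimates there. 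You instead exhaust $X_i$ by the compact Stein pieces $X_i^{(N)}$ and use that any closed surface, being compact, lands in some $X_i^{(N)}$, where Lemma~\ref{lem:change of stein structure} and Proposition~\ref{sec:main theorem:prop:genus} apply. Both transfers are valid; yours is arguably more elementary, while the paper's packages everything into one ambient Stein manifold. Note that your intermediate claim that the Stein structures ``assemble into a Stein structure on the noncompact $X_i$'' is neither needed for your argument nor justified by Theorem~\ref{sec:stein:th:stein_existence} (which is a statement about compact manifolds); you should simply drop it, since the rest of your argument only ever invokes adjunction inside the compact pieces.

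One caveat of substance: you work with the Section~\ref{sec:main_algorithm} manifolds $X^{(n)}_i$ and Proposition~\ref{sec:main theorem:prop:genus}, whose condition $u_0^2=m_0$ is not preserved by orientation-reversing diffeomorphisms. Your argument therefore only rules out orientation-preserving diffeomorphisms between the $X_i$. The paper instead uses the Section~\ref{sec:variant} refinement $\widehat{X}^{(n)}_i$ and Proposition~\ref{sec:variant:prop:non-existence} (conditions on $\lvert u_j^2\rvert$), which yields non-diffeomorphism for any orientations; the paper's own closing remark in Section~\ref{sec:noncompact} points out exactly this distinction. To match the theorem as the paper proves it, impose the extra conditions (v), (vi) of Definition~\ref{sec:variant:def} on your sequence $p_i$ and cite Proposition~\ref{sec:variant:prop:non-existence} in place of Proposition~\ref{sec:main theorem:prop:genus}; everything else in your argument goes through unchanged.
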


In this section, we use the same symbols as in Section~\ref{sec:variant} (and \ref{sec:main_algorithm}). Let $X$ be any compact $2$-handlebody with $b_2(X)\geq 1$. Let $\widehat{X}$ and $\widehat{X}^{(n)}_i$ $(0\leq i\leq n)$ denote the compact 2-handlebodies in Definitions~\ref{sec:algorithm:def:$widetilde{X}$} and \ref{sec:variant:def}, respectively, corresponding to this $X$. Recall that, for each $n\geq 1$, the smooth $4$-manifold $\widehat{X}^{(n)}_0$ is obtained from $\widehat{X}$ by attaching $n$ pairs of $1$- and $2$-handle to the boundary (i.e.\ by $W^{-}(p_{1})$-, $W^-(p_{2})$-, \dots, $W^-(p_{n})$-modifications to $K_0$). We thus have an infinite sequence $\widehat{X}^{(1)}_0\subset \widehat{X}^{(2)}_0\subset \dots \subset \widehat{X}^{(n)}_0\subset \widehat{X}^{(n+1)}_0\subset \cdots$. We are now ready to define noncompact $4$-manifolds. See also Figure~\ref{fig14}. 
\begin{figure}[ht!]
\begin{center}
\includegraphics[width=4.6in]{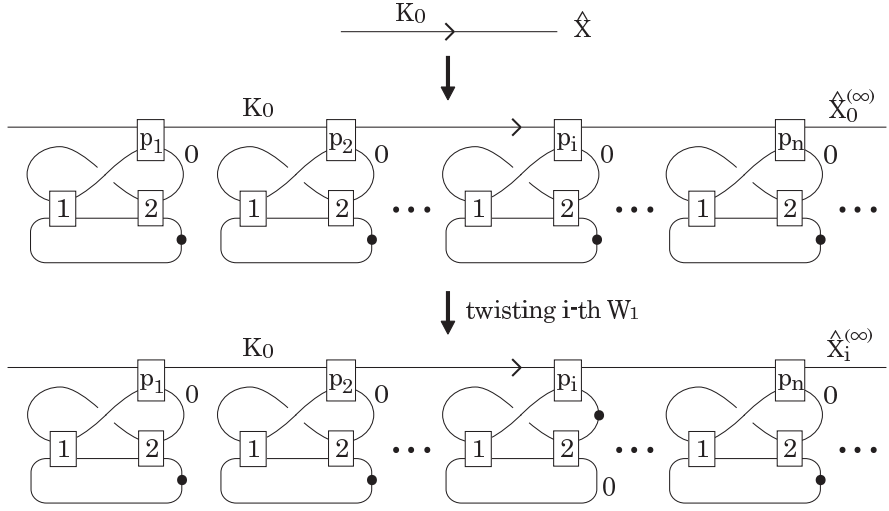}
\caption{$\widehat{X}^{(\infty)}_i$}
\label{fig14}
\end{center}
\end{figure}

\begin{definition}\label{sec:noncompact:def:X:infty}Let $\widehat{X}^{(\infty)}_0$ be the noncompact smooth $4$-manifold obtained by the inductive limit of the above sequence. Let $\widehat{X}^{(\infty)}_i$ $(i\geq 1)$ be the noncompact smooth $4$-manifold obtained from $\widehat{X}^{(\infty)}_0$ by the cork twist along $(W,f)$ where this $W$ is the one given by the above $W^-(p_{i})$-modification to $K_0$. 
\end{definition}


We can now easily prove the above theorem. 
\begin{proof}[Proof of Theorem~\ref{sec:noncompact:mainthm}]
Put $X_i:=\widehat{X}^{(\infty)}_i$. $(1)$ is obvious from the definition of $\widehat{X}^{(\infty)}_i$.\smallskip

$(2)$. Since each $\widehat{X}^{(\infty)}_i$ is obtained from $\widehat{X}^{(i+1)}_i$ by applying infinitely many $W^-$-modifications, Proposition~\ref{sec:attachment:prop:embed} implies that each $\widehat{X}^{(\infty)}_i$ $(i\geq 0)$ can be embedded into $\widehat{X}^{(i+1)}_i$ so that the induced homomorphism between the second homology groups is an isomorphism. (Thus this fact shows $(4)$.) It is thus easy to check that the claims similar to Lemma~\ref{sec:main:lem:genus} and Proposition~\ref{sec:variant:prop:non-existence} hold for $\widehat{X}^{(\infty)}_i$ $(i\geq 0)$. This shows that $\widehat{X}^{(\infty)}_i$ $(i\geq 0)$ are mutually non-diffeomorphic for any orientations. On the other hand, $\widehat{X}^{(\infty)}_i$ $(i\geq 0)$ are mutually homeomorphic because they are related by cork twists.\smallskip

$(3)$. The well-known properties of the inductive limit operation and Theorem~\ref{sec:variant:main theorem}.(1) show the $i=0$ case. Since $\widehat{X}^{(\infty)}_i$ $(i\geq 1)$ is a cork twist of $\widehat{X}^{(\infty)}_0$, the $i\geq 1$ case also follows.
\end{proof}
\begin{remark}The corresponding result holds when we use $X^{(n)}_i$ instead of $\widehat{X}^{(n)}_i$, though the orientation problem as in Theorem~\ref{sec:algorithm1:main theorem} remains, in this case.
\end{remark}
\section{Examples}\label{sec:example}
\subsection{The simplest example}\label{subsec:simplestexample}In this subsection, we apply the algorithm in Section~\ref{sec:main_algorithm} for the simplest example. Actually, our algorithm is a generalization of this example. We also demonstrate how to show the (non-) existence of Stein structures on $X^{(n)}_{-1}$ for the example. We use the same symbols as in Section~\ref{sec:main_algorithm}. \smallskip 
\begin{figure}[ht!]
\begin{center}
\includegraphics[width=0.5in]{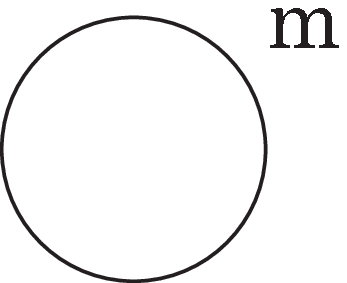}
\caption{$U(m)$}
\label{fig15}
\end{center}
\end{figure}
\begin{figure}[ht!]
\begin{center}
\includegraphics[width=0.9in]{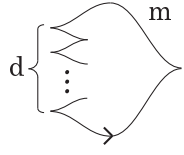}
\caption{A Legendrian handlebody presentation of $U(m)$}
\label{fig16}
\end{center}
\end{figure}

Let $U(m)$ be the 2-handlebody in Figure~\ref{fig15}. We here apply Step~\ref{step1} in Section~\ref{sec:main_algorithm} to this $U(m)$. There are infinitely many Legendrian handlebody presentation of $U(m)$, and we can use anyone. We here adopt the one in Figure~\ref{fig16}, where $d:=-m-1$ $(m\leq -2)$ and $d:=1$ $(m\geq -1)$. In this case, the symbols in Section~\ref{sec:main_algorithm} correspond to following: $X=U(m)$, and $K_0$ is the $m$-framed unknot in the figure, $k=0$, $m_0=m$, $t_0=-d$, $r_0=d-1$ and $g_0=0$. Put $p_{-1}=p_0=0$, $p_1=1$ $(m\leq -2)$, $p_1=m+2$ $(m\geq -1)$ and $p_i=p_1+i-1$ $(i\geq 2)$. This sequence $p_i$ $(i\geq -1)$ satisfies the conditions in Definition~\ref{sec:the algorithm: def: p_i}. \smallskip 

We next apply the Steps~\ref{step3}--\ref{step5} in Section~\ref{sec:main_algorithm} (Step~\ref{step2} is skipped in this case). Denote by $U(m)^{(n)}_i$ the Legendrian handlebody corresponding to $X^{(n)}_i$ for $U(m)$. A smooth handlebody diagram of $U(m)^{(n)}_i$ is given in Figures~\ref{fig17} and \ref{fig18}. Namely, $U(m)^{(n)}_i$ $(1\leq i\leq n)$ is obtained from $U(m)^{(n)}_0(=U(m)^{(n)}_{-1})$ by exchanging the dot and $0$ of the $i$-th $W_1$ component. Theorem~\ref{sec:algorithm1:main theorem} clearly holds for these $U(m)^{(n)}_i$ $(-1\leq i\leq n)$. In particular, $U(m)^{(n)}_i$ $(0\leq i\leq n)$ are mutually homeomorphic but not diffeomorphic, where we fix $n\geq 1$. \smallskip
\begin{figure}[ht!]
\begin{center}
\includegraphics[width=2.8in]{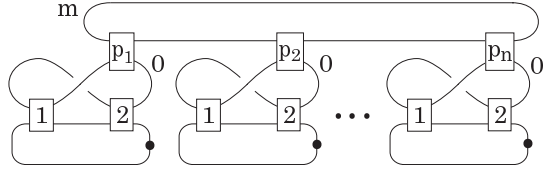}
\caption{A smooth handle diagram of $U(m)^{(n)}_0(=U(m)^{(n)}_{-1})$}
\label{fig17}
\end{center}
\end{figure}
\begin{figure}[ht!]
\begin{center}
\includegraphics[width=3.0in]{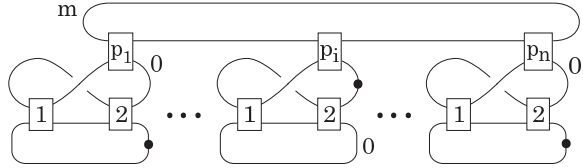}
\caption{A smooth handle diagram of $U(m)^{(n)}_i$ $(1\leq i\leq n)$}
\label{fig18}
\end{center}
\end{figure}

Finally, we discuss the (non-) existence of Stein structures on $U(m)^{(n)}_{-1}(=U(m)^{(n)}_{0})$.\smallskip

(1) The $m\leq -2$ case. In this case, $U(m)$ is a good Stein handlebody, hence Theorem~\ref{sec:algorithm1:main theorem} shows that $U(m)^{(n)}_{-1}$ admits a Stein structure. \smallskip

(2) The $m\geq -1$ case. In this case, $U(m)$ (and thus $U(m)^{(n)}_{-1}$) contains a homologically non-vanishing smoothly embedded sphere with its self-intersection number $m$. Adjunction inequality shows that $U(m)^{(n)}_{-1}$ does not admit any Stein structure. 


\subsection{Exotic complements in the 4-sphere}In this subsection, we demonstrate Theorem~\ref{sec:algorithm1:exotic knottings} in the $(Z,Y)=(S^4, \Sigma_g\times D^2)$ $(g\geq 1)$ case, where $\Sigma_g$ denotes the closed surface of genus $g$. Figure~\ref{fig19} is a handlebody diagram of $\Sigma_g\times D^2$.\smallskip
\begin{figure}[ht!]
\begin{center}
\includegraphics[width=3.0in]{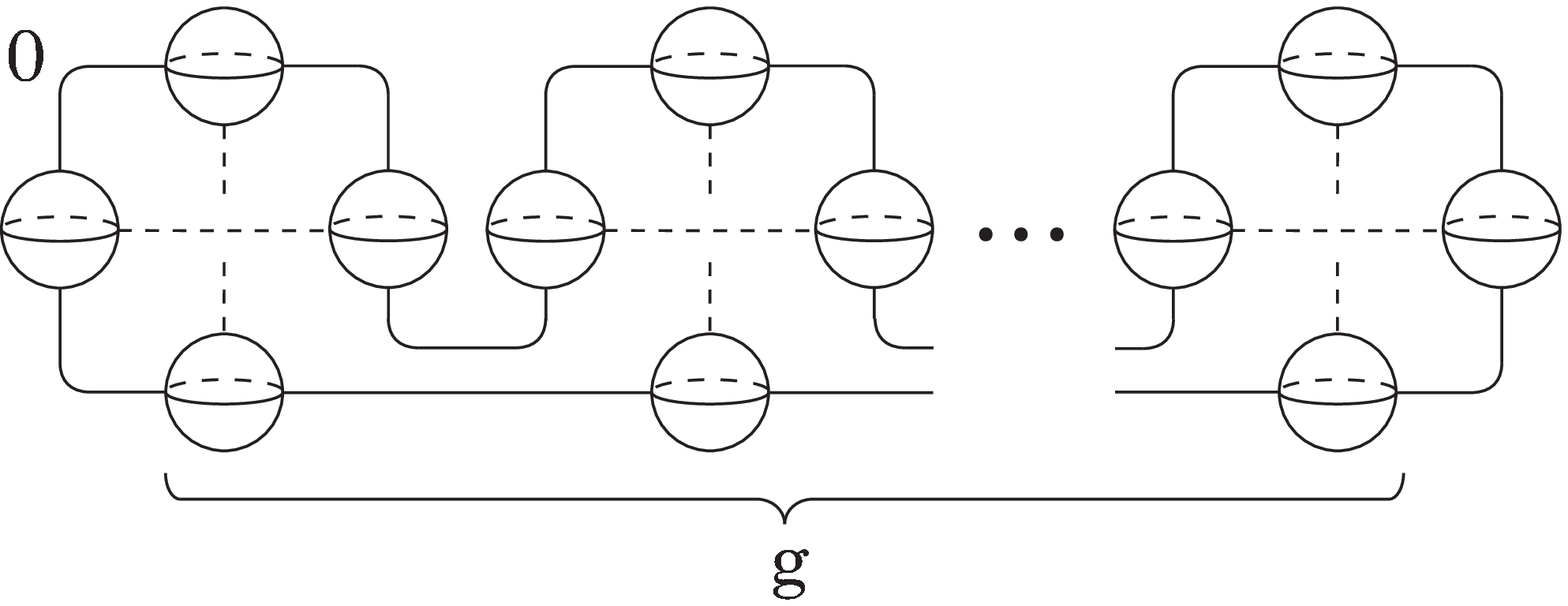}
\caption{$\Sigma_g\times D^2$ $(g\geq 1)$}
\label{fig19}
\end{center}
\end{figure}

We can embed $\Sigma_g\times D^2$ into $S^4$ as follows: Converting the picture into the dotted circle notation, we get the diagram of $\Sigma_g\times D^2$ in Figure~\ref{fig20}. Taking a double, we get the diagram of $\Sigma_g\times S^2$ in Figure~\ref{fig21}. By surgering $\Sigma_g\times D^2\subset \Sigma_g\times S^2$, we get the closed 4-manifold in Figure~\ref{fig22}, where the $0$-framed meridian of the dotted circle, $2g$ 3-handles and the $4$-handle constitute $\Sigma_g\times D^2$. It is easy to see that this closed 4-manifold is $S^4$ (cancel 1/2-handle pair, then cancel $2g$ 2/3-handle pairs).

\begin{figure}[ht!]
\begin{center}
\includegraphics[width=2.5in]{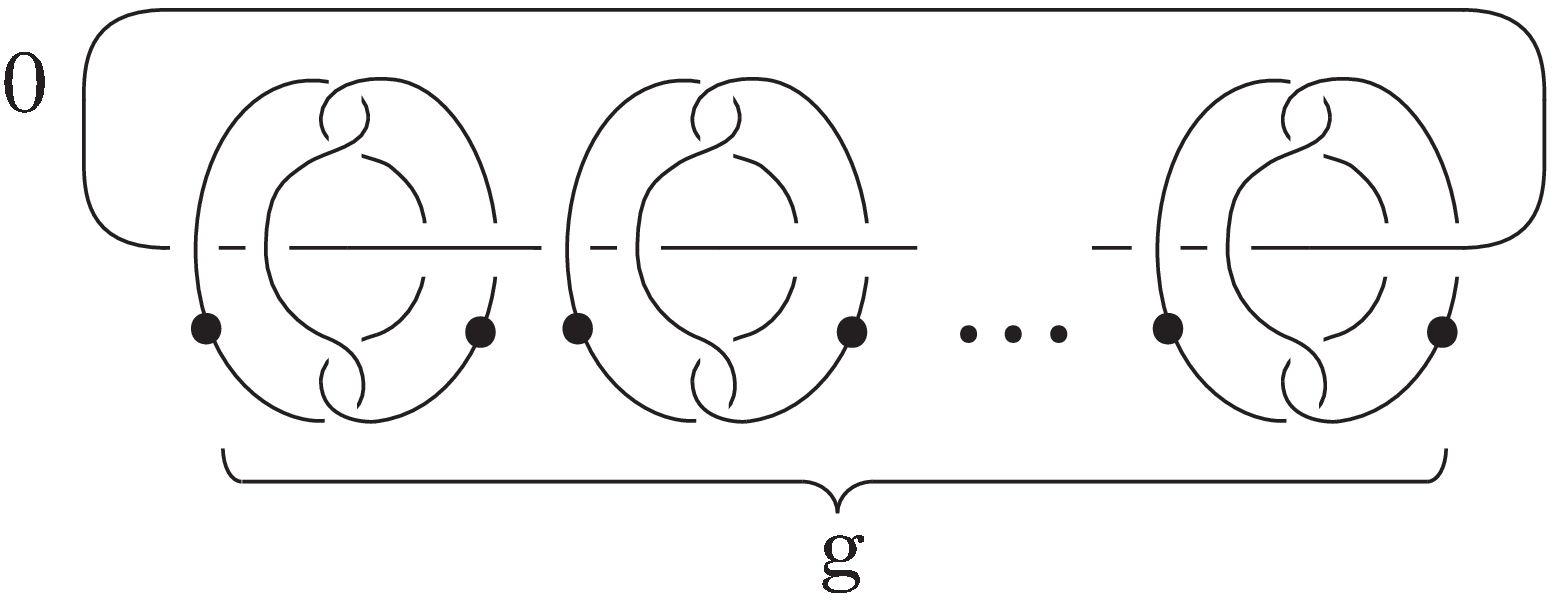}
\caption{$\Sigma_g\times D^2$ $(g\geq 1)$}
\label{fig20}
\end{center}
\end{figure}
\begin{figure}[ht!]
\begin{center}
\includegraphics[width=3.5in]{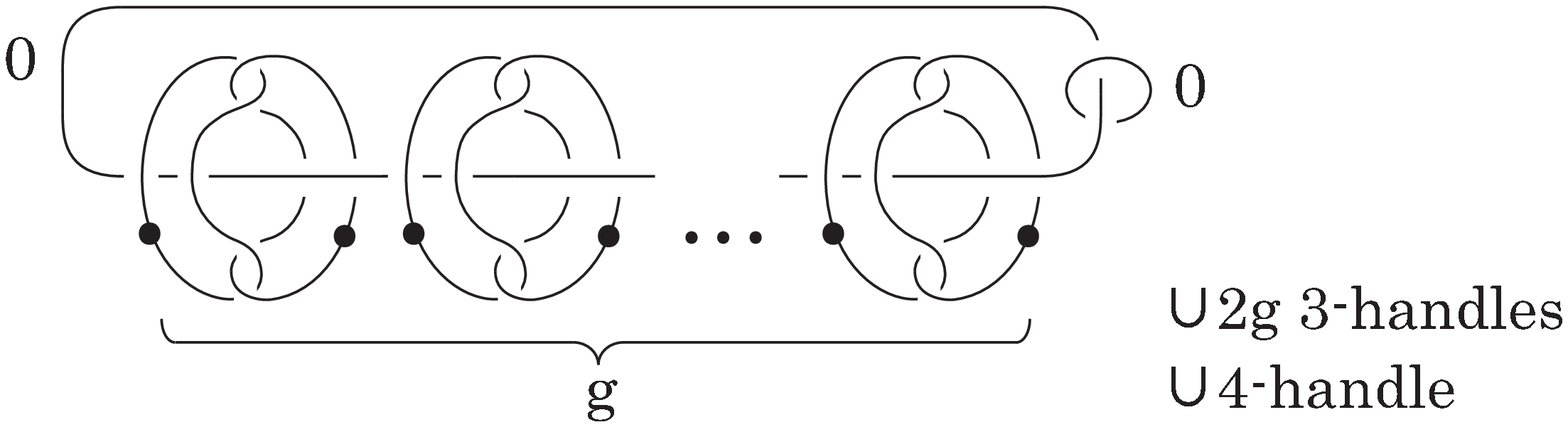}
\caption{$\Sigma_g\times S^2$ $(g\geq 1)$}
\label{fig21}
\end{center}
\end{figure}
\begin{figure}[ht!]
\begin{center}
\includegraphics[width=3.3in]{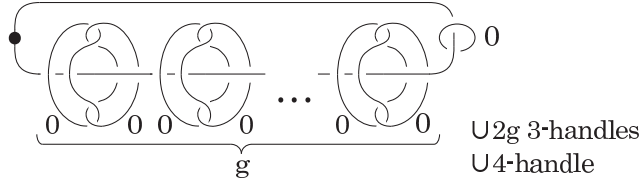}
\caption{$(\textnormal{surgered }\Sigma_g\times S^2)\cong S^4$ $(g\geq 1)$}
\label{fig22}
\end{center}
\end{figure}
\begin{figure}[ht!]
\begin{center}
\includegraphics[width=2.3in]{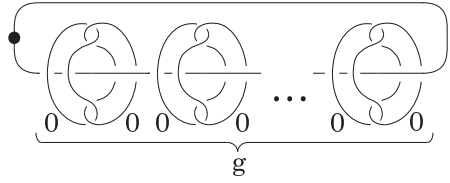}
\caption{$X_g:=S^4-\textnormal{int}\,(\Sigma_g\times D^2)$ $(g\geq 1)$}
\label{fig23}
\end{center}
\end{figure}
We thus have an embedding of $\Sigma_g\times D^2$ into $S^4$ such that its complement is the 2-handlebody (call $X_g$) with $b_2\geq 1$ in Figure~\ref{fig23}. \smallskip

Applying Theorem~\ref{sec:algorithm1:exotic knottings} to the above embedding, we get the following proposition. Note that the intersection form of $X_g$ is represented by the zero matrix.
\begin{proposition}\label{sec:ex:prop:complement}
Fix $g\geq 1$. For each $n\geq 1$, there exist mutually diffeomorphic compact connected oriented smooth $4$-manifolds $Y_i$ $(0\leq i\leq n)$ embedded into $S^4$ with the following properties. \smallskip\\
$(1)$ The pairs $(S^4, Y_i)$ $(0\leq i\leq n)$ are mutually homeomorphic but non-diffeomorphic.\smallskip\\
$(2)$ The fundamental group, the integral homology groups, the integral homology groups of the boundary, and the intersection form of every $Y_i$ $(0\leq i\leq n)$ are isomorphic to those of $\Sigma_g\times D^2$. 
\end{proposition}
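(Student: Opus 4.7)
The proposition follows essentially as a direct specialization of Theorem~\ref{sec:algorithm1:exotic knottings} to the pair $(Z,Y) = (S^4, \Sigma_g\times D^2)$, with the embedding constructed in the preceding paragraphs. My plan is as follows.

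First, I would verify the hypothesis of Theorem~\ref{sec:algorithm1:exotic knottings}: the complement $X_g = S^4 - \textnormal{int}\,(\Sigma_g\times D^2)$ is the $2$-handlebody shown in Figure~\ref{fig23}, and one reads off from that diagram that $b_2(X_g)\geq 1$. In fact the intersection form of $X_g$ is represented by the zero matrix, so condition (ii) of Theorem~\ref{sec:algorithm1:exotic knottings}(1) holds and we could, if desired, conclude non-diffeomorphism with either orientation; but for the statement of Proposition~\ref{sec:ex:prop:complement} the weaker conclusion with the given orientations suffices.

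Next, fixing $n\geq 1$, I would apply Theorem~\ref{sec:algorithm1:exotic knottings} directly to the embedding $\Sigma_g\times D^2 \hookrightarrow S^4$ constructed above, producing mutually diffeomorphic compact connected oriented smooth $4$-manifolds $Y^{(n)}_i$ $(-1\leq i\leq n)$ embedded in $S^4$. Setting $Y_i := Y^{(n)}_i$ for $0\leq i\leq n$, claim (1) of the proposition (mutual homeomorphism but pairwise non-diffeomorphism of the pairs $(S^4, Y_i)$) is immediate from Theorem~\ref{sec:algorithm1:exotic knottings}(1). For claim (2), note that each complement $X^{(n)}_i := S^4 - \textnormal{int}\, Y^{(n)}_i$ has the same fundamental group, integral homology groups, integral homology groups of the boundary, and intersection form as $X_g$, by Theorem~\ref{sec:algorithm1:exotic knottings}(3) combined with Theorem~\ref{sec:algorithm1:main theorem}(1). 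Applying Mayer--Vietoris and van Kampen to the decomposition $S^4 = Y_i \cup X^{(n)}_i$ (glued along their common boundary $\partial Y_i = \partial X^{(n)}_i$) — and comparing with the corresponding decomposition $S^4 = (\Sigma_g\times D^2)\cup X_g$ — transfers the invariants of $Y_i$ to those of $\Sigma_g\times D^2$. Since the $Y_i$ are mutually diffeomorphic, this check needs to be done for only one of them.

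There is no real obstacle here; the work was all done in Theorem~\ref{sec:algorithm1:exotic knottings}. The only mildly nontrivial point is that the theorem directly controls the invariants of the complements $X^{(n)}_i$ rather than of the $Y^{(n)}_i$ themselves, so one must observe that, since all $Y^{(n)}_i$ are diffeomorphic and their common diffeomorphism type is forced by a single application of the gluing/Mayer--Vietoris argument above, it suffices to identify the invariants for the initial piece $Y^{(n)}_{-1}$; this is automatic from the construction in the proof of Theorem~\ref{sec:algorithm1:exotic knottings} since $Y^{(n)}_{-1}$ is obtained from $\Sigma_g\times D^2$ by modifying only handles of the complement.
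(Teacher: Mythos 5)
Your proposal is correct and is essentially the paper's own argument: the paper proves this proposition simply by applying Theorem~\ref{sec:algorithm1:exotic knottings} to the embedding of $\Sigma_g\times D^2$ into $S^4$ with complement $X_g$ (noting $b_2(X_g)\geq 1$ and that its intersection form is the zero matrix). The only superfluous step is your Mayer--Vietoris detour for claim (2): part (2) of Theorem~\ref{sec:algorithm1:exotic knottings} already asserts directly that the invariants of the $Y^{(n)}_i$ themselves (not just of their complements) agree with those of $Y$, so claim (2) is immediate.
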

\begin{remark} Similarly to the above, we can embed $D^2$-bundle $R_k$ $(k\geq 1)$ over $\#_{k}\mathbf{RP}^2$ with Euler number $-2k$ into $S^4$, and apply our algorithm to the pair $(S^4, R_k)$. While this procedure seems not to give exotic embeddings of $\#_{k}\mathbf{RP}^2$, Finashin-Kreck-Viro~\cite{FKV1}, \cite{FKV2} constructed infinitely many exotic embeddings of $\#_{10}\mathbf{RP}^2$ into $S^4$, by different methods. See also Finashin~\cite{Fina}. However, it is unclear whether the complements of (the neighborhoods of) their examples are mutually homeomorphic but not diffeomorphic. 
\end{remark}
\section{Exotic non-Stein 4-manifolds and exotic Stein 4-manifolds}\label{sec:non-stein} 
In this section, we construct arbitrary many non-Stein 4-manifolds and arbitrary many Stein 4-manifolds which are mutually homeomorphic but not diffeomorphic. Namely, we prove 
\begin{theorem}\label{sec:nonstein:thm:main0} Let $X$ be a $2$-handlebody with $b_2(X)\geq 1$, For each $n\geq 1$, there exist $2$-handlebodies $X^S_i$ and $X^N_i$ $(1\leq i\leq n)$ with the following properties.\smallskip\\
$(1)$ $X^S_1, X^S_2,\dots, X^S_n$ and $X^N_1, X^N_2,\dots,X^N_n$ are mutually homeomorphic but non-diffeomorphic. \smallskip\\
$(2)$ Every $X^S_i$ $(1\leq i\leq n)$ admits a Stein structure, and any $X^N_i$ $(1\leq i\leq n)$ admits no Stein structure. \smallskip\\
$(3)$ The fundamental groups, the integral homology groups, the integral homology groups of the boundary, and the intersection forms of every $X^S_i$ and every $X^N_i$ $(0\leq i\leq n)$ are isomorphic to those of the boundary sum $X\natural U(0)$. Here $U(0)$ denotes the one in Subsection~\ref{subsec:simplestexample}. 
\end{theorem}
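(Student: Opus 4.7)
The plan is to apply the algorithm of Theorem \ref{sec:variant:main theorem} in two different ways and combine them. First, apply Theorem \ref{sec:variant:main theorem} to $X \natural U(0)$ with the $0$-framed $2$-handle of the $U(0)$ summand playing the role of $K_0$ and with $n$ modifications; define $X^S_i$ $(1 \leq i \leq n)$ to be the $i$-th resulting handlebody. By Theorem \ref{sec:algorithm1:main theorem}(3) these are Stein, by Theorem \ref{sec:variant:main theorem}(1) they are mutually homeomorphic and pairwise non-diffeomorphic, and by Theorem \ref{sec:algorithm1:main theorem}(1) their topological invariants agree with those of $X \natural U(0)$.

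Second, apply Theorem \ref{sec:variant:main theorem} to $X$ alone with $n$ modifications to obtain Stein handlebodies $Y_1, \ldots, Y_n$ sharing the topological invariants of $X$, and set $X^N_i := Y_i \natural U(0)$. Each $X^N_i$ contains the generator $2$-sphere of the $U(0)$ summand, an essential smoothly embedded $S^2$ with self-intersection $0$, so the genus-$0$ adjunction inequality for compact Stein $4$-manifolds rules out any Stein structure on $X^N_i$. The topological invariants of $X^N_i$ agree with those of $Y_i \natural U(0)$, that is, of $X \natural U(0)$.

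Mutual non-diffeomorphism of the $X^N_i$'s is obtained from a minor adaptation of Proposition \ref{sec:main theorem:prop:genus}: any embedded surface in $X^N_i$ representing a class pulled from $H_2(Y_i)$ can be disk-compressed across the boundary-sum $S^3$ into a surface of no larger genus lying in $Y_i$ (any null-homologous component falling into $U(0)$ is discarded), so the Stein adjunction bounds inside $Y_i$ still control minimum genera in $X^N_i$ and separate the $X^N_i$'s exactly as in Section \ref{subsec:main_algorithm}. Across the two families, $X^S_i$ is Stein while $X^N_j$ is not, which suffices to make them non-diffeomorphic. For mutual homeomorphism of the full combined family I would unify the two constructions: perform $W^-$-modifications to $X \natural U(0)$ both on the $2$-handle $K^U$ of $U(0)$ and on a chosen $2$-handle $K^X$ of $X$, producing one smooth $4$-manifold $M$ carrying $2n$ disjoint copies of the cork $(W_1, f_1)$. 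Twisting the $n$ corks attached to $K^U$ (with the Stein-making zigzag adjustments) recovers $X^S_i$, while twisting the $n$ corks attached to $K^X$ leaves the $0$-sphere of $U(0)$ intact and recovers $X^N_i$; since cork twisting preserves the ambient homeomorphism type, all $2n$ manifolds are homeomorphic to $M$ and hence to each other.

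The main obstacle I expect is this last unification: one must check that running Steps \ref{step1}--\ref{step5} simultaneously on the two $2$-handles $K^U$ and $K^X$ still yields a Stein handlebody for every $K^U$-twist (requiring compatible zigzag choices for the auxiliary handles on both sides) and still preserves the essential $0$-sphere for every $K^X$-twist, and that the resulting $2n$ manifolds indeed coincide up to diffeomorphism with the $X^S_i$ and $X^N_i$ constructed separately in the first two paragraphs.
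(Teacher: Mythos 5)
Your overall strategy is the same as the paper's (boundary-sum the exotic Stein family with a piece that is Stein, respectively with a piece containing a homologically essential square-zero sphere, then use adjunction both to kill Stein structures on the $X^N_i$ and to separate the members of each family), but there is a genuine gap at exactly the point you flag: the mutual homeomorphism of all $2n$ manifolds. As you have defined them, $X^S_i$ is the algorithm applied to $X\natural U(0)$ (so every $2$-handle of the $X$-summand acquires $W^+(q_j)$-modifications and $K^U$ acquires the $W^\pm(p_i)$'s), while $X^N_j=Y_j\natural U(0)$ carries a completely different set of modifications, all inside the $X$-summand. These operations change the boundary $3$-manifold, and agreement of $\pi_1$, homology, intersection form and boundary homology does not give a homeomorphism of compact $4$-manifolds with boundary; the paper only ever deduces homeomorphism from the fact that the manifolds are related by cork twists. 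Your proposed ``unification'' (one manifold $M$ with $2n$ disjoint corks) would indeed make the family mutually homeomorphic, but the manifolds it produces are not the ones defined in your first two paragraphs --- the untwisted $W^-$-pairs left over on the other handle change the smooth type --- so the Stein/non-Stein and genus arguments would all have to be redone for $M$, which you have not done.

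The paper closes this gap by choosing the two summands to be the exotic pair $U(0)^{(1)}_1$ (Stein) and $U(0)^{(1)}_0$ (contains the essential $0$-sphere) from Subsection~\ref{subsec:simplestexample}, which differ by a single cork twist; setting $X^S_i:=\widehat{X}^{(n)}_i\natural U(0)^{(1)}_1$ and $X^N_i:=\widehat{X}^{(n)}_i\natural U(0)^{(1)}_0$ makes all $2n$ manifolds cork twists of one another, so homeomorphism is immediate. The price is two identifications you would also need: first, Lemma~\ref{sec:nonstein:lem:smooth:stein} shows (using the extra condition (iii)$'$ on $p_1$) that $X^S_i$ is diffeomorphic to $\widehat{X\natural U(0)}^{(n)}_i$, so Theorem~\ref{sec:variant:main theorem} separates the Stein family; second, since $U(0)$ (hence $U(0)^{(1)}_0$, by Proposition~\ref{sec:attachment:prop:embed}) embeds in the $4$-ball, each $X^N_i$ embeds in the Stein manifold $\widehat{X}^{(n)}_i$ killing $w$ and fixing the $v^{(i)}_j$, so connected surfaces push over at the same genus and the adjunction argument of Lemma~\ref{sec:nonstein:lem:genus2} applies verbatim. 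This embedding trick also cleans up the wrinkle in your disk-compression argument: compressing along the boundary-sum $3$-ball can disconnect the surface, and re-tubing the components inside $Y_i$ can raise the genus, so ``no larger genus lying in $Y_i$'' needs more care than you give it.
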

We prove this theorem, using the examples $U(0)^{(1)}_0$ and $U(0)^{(1)}_1$ in subsection~\ref{subsec:simplestexample}. Figure~\ref{fig24} shows smooth handlebody diagrams of $U(0)^{(1)}_0$ and $U(0)^{(1)}_1$. 
\begin{figure}[ht!]
\begin{center}
\includegraphics[width=2.6in]{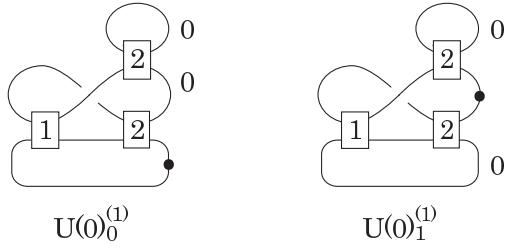}
\caption{$U(0)^{(1)}_0$ and $U(0)^{(1)}_1$}
\label{fig24}
\end{center}
\end{figure}

\begin{definition}\label{sec:nonstein:def:nonstein} 
Let $X$ be a $2$-handlebody with $b_2(X)\geq 1$. Fix $n\geq 1$. Let $\widehat{X}^{(n)}_i$ $(1\leq i\leq n)$ be the one in  Definition~\ref{sec:variant:def}, corresponding to this $X$. We assume that $p_i$'s in Definition~\ref{sec:variant:def} further satisfy the following condition. 
\begin{itemize}
 \item [(iii)'] $2p_1+(t_0-1)-m_0+\rvert r_0\lvert \;> 2$.
\end{itemize}
For each $1\leq i\leq n$, put $X^S_i:=\widehat{X}^{(n)}_i\natural U(0)^{(1)}_1$ and $X^N_{i}:=\widehat{X}^{(n)}_i\natural U(0)^{(1)}_0$ where $\natural$ denotes the boundary sum.
\end{definition}

\begin{lemma}\label{sec:nonstein:lem:stein_nonstein}
$(1)$ Every $X^S_i$ $(1\leq i\leq n)$ admits a Stein structure. \smallskip\\
$(2)$ Any $X^N_i$ $(1\leq i\leq n)$ admits no Stein structure for any orientations.\smallskip\\
$(3)$ $X^S_1, X^S_2,\dots, X^S_n$ and $X^N_1, X^N_2,\dots,X^N_n$ are mutually homeomorphic.\smallskip\\ 
$(4)$ The fundamental groups, the integral homology groups, the integral homology groups of the boundary, and the intersection forms of every $X^S_i$ and every $X^N_i$ $(0\leq i\leq n)$ are isomorphic to those of $X\natural U(0)$. 
\end{lemma}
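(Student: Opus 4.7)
The plan is to reduce each of the four assertions to material already established in the paper, taking $(1)$ and $(4)$ together from previous Stein existence results, obtaining $(2)$ from an adjunction-type obstruction, and obtaining $(3)$ from a cork twist argument.

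For $(1)$ and $(4)$: Theorem~\ref{sec:variant:main theorem} (through Theorem~\ref{sec:algorithm1:main theorem}~(3)) furnishes a Stein structure on each $\widehat{X}^{(n)}_i$ with $1 \leq i \leq n$, and Subsection~\ref{subsec:simplestexample} supplies a Stein structure on $U(0)^{(1)}_1$ (the $m=0$, $i=1$ case of Theorem~\ref{sec:algorithm1:main theorem}~(3) applied to $U(0)$). Since the boundary sum of two compact Stein $4$-manifolds is Stein (just place their Stein handlebodies side by side), $X^S_i = \widehat{X}^{(n)}_i \natural U(0)^{(1)}_1$ is Stein, giving $(1)$. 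For $(4)$, the boundary sum yields a free product on $\pi_1$, a direct sum on integral homology and intersection form, and a connected sum on the boundary, so its topological invariants are determined by those of the summands. By Theorem~\ref{sec:variant:main theorem} combined with Theorem~\ref{sec:algorithm1:main theorem}~(1), the summand $\widehat{X}^{(n)}_i$ shares the invariants of $X$ and $U(0)^{(1)}_{0}$ (or $U(0)^{(1)}_{1}$) shares those of $U(0)$, so every $X^S_i$ and every $X^N_i$ shares the invariants of $X \natural U(0)$.

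For $(2)$: the plan is to apply the adjunction inequality. In $U(0)^{(1)}_0 = U(0)^{(1)}_{-1}$ (note $k = 0$ for $X = U(0)$, so Step~\ref{step2} is vacuous and $X^{(n)}_{-1} = X^{(n)}_0$), Lemma~\ref{sec:main:lem:genus}~(2) with $g_0 = 0$ and $m_0 = 0$ presents the generator $v^{(-1)}_0$ of $H_2$ as a smoothly embedded $2$-sphere $\Sigma$ of self-intersection zero. Because the boundary sum is supported in a small $4$-ball near a $0$-handle, $\Sigma$ survives as an embedded sphere in $X^N_i = \widehat{X}^{(n)}_i \natural U(0)^{(1)}_0$, and its class remains nontrivial since boundary sum splits $H_2$ as a direct sum. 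If $X^N_i$ admitted a Stein structure, the adjunction inequality of Section~\ref{sec:stein_contact} would read
\[
0 + \bigl\lvert \langle c_1, [\Sigma] \rangle \bigr\rvert \;\leq\; -2,
\]
which is impossible. Orientation reversal flips signs of $c_1$ and of the intersection form but keeps $[\Sigma]^2 = 0$ and the absolute-value term nonnegative, so the argument rules out Stein structures for either orientation.

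For $(3)$: all the manifolds $X^S_i$ and $X^N_j$ are obtained from a single ambient $4$-manifold by cork twists along disjoint copies of $W_1$. By Theorem~\ref{sec:algorithm1:main theorem}~(6), applied to $X$ through Theorem~\ref{sec:variant:main theorem} and applied separately to $U(0)$, the $\widehat{X}^{(n)}_i$ are pairwise related by cork twists at disjoint $W_1$-submanifolds, and $U(0)^{(1)}_0$ and $U(0)^{(1)}_1$ are related by one further cork twist; all these corks remain disjoint inside the boundary sum, since each lives in its own summand. Because cork twisting preserves the homeomorphism type, all $X^S_i$ and $X^N_j$ are mutually homeomorphic. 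The most delicate point in the plan is the survival of $\Sigma$ in the boundary sum used for $(2)$; this however is straightforward because the boundary sum modifies only a small neighbourhood of the $0$-handles, leaving the entire $2$-handle level of $U(0)^{(1)}_0$ (and hence $\Sigma$) untouched.
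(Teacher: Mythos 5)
Your proposal is correct and follows essentially the same route as the paper's (much terser) proof: $X^S_i$ is a Stein handlebody by construction, $X^N_i$ is obstructed by the adjunction inequality applied to the homologically essential square-zero sphere coming from $U(0)^{(1)}_0$, the homeomorphisms come from the cork twists relating all these manifolds, and the invariants follow from the boundary-sum decomposition together with Theorem~\ref{sec:variant:main theorem}. Your added details (survival of the sphere under boundary sum, the orientation-reversal check, and the disjointness of the corks in the two summands) are all sound elaborations of what the paper leaves implicit.
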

\begin{proof}Every $X^S_i$ $(1\leq i\leq n)$ is clearly a Stein handlebody, hence admits a Stein structure. Since $U(0)^{(1)}_0$ (hence $X^N_i$) contains a homologically non-vanishing smoothly embedded sphere with its self-intersection number $0$, the adjunction inequality shows that $X^N_j$ $(1\leq j\leq n)$ does not admit any Stein structure for any orientations. Since $X^S_1, X^S_2,\dots, X^S_n$ and $X^N_1, X^N_2,\dots,X^N_n$ are mutually related by cork twists, the claim $(3)$ follows. The definitions of $U(0)^{(1)}_j$, $X^S_i$ and $X^N_i$ and Theorem~\ref{sec:variant:main theorem} show the claim $(4)$.
\end{proof}
\begin{lemma}\label{sec:nonstein:lem:smooth:stein} $X^S_i$ $(1\leq i\leq n)$ are mutually non-diffeomorphic for any orientations. 
\end{lemma}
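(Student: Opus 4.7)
The plan is to mimic the adjunction-inequality argument of Propositions~\ref{sec:main theorem:prop:genus} and~\ref{sec:variant:prop:non-existence}, with one extra basis element coming from the $U(0)^{(1)}_1$ summand. Since a boundary sum of Stein handlebodies is again a Stein handlebody, each $X^S_{i}$ inherits a Stein structure whose first Chern class splits across the orthogonal direct-sum decomposition $H_2(X^S_{i};\mathbf{Z})=H_2(\widehat{X}^{(n)}_{i};\mathbf{Z})\oplus H_2(U(0)^{(1)}_1;\mathbf{Z})$. Writing $w$ for the generator of the second summand (the class $v^{(1)}_0$ of $U(0)^{(1)}_1$), Proposition~\ref{sec:attachments:prop:genus of attachments} and a direct computation in the notation of Subsection~\ref{subsec:simplestexample} show that $w$ is represented by a smoothly embedded genus-$2$ surface with $w^{2}=0$ and $|\langle c_1(U(0)^{(1)}_1),w\rangle|=2$, since $U(0)$ has parameters $p_1=2$, $t_0=-1$, $r_0=0$, $m_0=g_0=0$.

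I would then assume for contradiction that $X^S_i$ is diffeomorphic to $X^S_{i'}$ (possibly with reversed orientation) for some $1\le i<i'\le n$. Such a diffeomorphism carries the basis $\{v^{(i)}_0,\ldots,v^{(i)}_k,w\}$ of $H_2(X^S_i;\mathbf{Z})$ to a basis $\{u_0,\ldots,u_k,u_*\}$ of $H_2(X^S_{i'};\mathbf{Z})$ satisfying: $u_0$ is represented by a smoothly embedded surface of genus at most $g_0+p_{i'-1}$ with $|u_0^2|=|m_0|$; each $u_j$ $(1\le j\le k)$ by a surface of genus $g_j+q_j$ with $|u_j^2|=|m_j|$ (to be omitted if $b_2(X)=1$); and $u_*$ by a genus-$2$ surface with $u_*^2=0$, which is stable under orientation reversal.

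The key additional input is an extension of Lemma~\ref{lem:change of stein structure} to $X^S_{i'}$. Using the flexibility of the zig-zag choices on each $\delta_j$ of $\widehat{X}^{(n)}_{i'}$ together with a fixed compatible Stein structure on $U(0)^{(1)}_1$, for every tuple $(a_0,\ldots,a_k,b)\in\mathbf{Z}^{k+2}$ there is a Stein structure $J$ on $X^S_{i'}$ with
\[
\bigl|\langle c_1(X^S_{i'},J),\,a_0v^{(i')}_0+\cdots+a_kv^{(i')}_k+bw\rangle\bigr|\;\geq\;|a_0|\bigl(2p_{i'}+(t_0-1)-m_0+|r_0|\bigr)+\sum_{j=1}^{k}|a_j\widehat{q}_j|+2|b|,
\]
which follows from additivity of $c_1$ across the boundary sum. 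I now apply the adjunction inequality to each $u_a$ in turn. For $u_0,u_1,\ldots,u_k$, the argument of Proposition~\ref{sec:variant:prop:non-existence} (using conditions (v) and (vi) of Definition~\ref{sec:variant:def}) forces the $v^{(i')}_0$-coefficient of each to vanish. For $u_*$, adjunction gives $|\langle c_1,u_*\rangle|\leq 2g(u_*)-2-u_*^2=2$, while the displayed bound together with the new condition $(\text{iii})'$, namely $2p_1+(t_0-1)-m_0+|r_0|>2$, forces the $v^{(i')}_0$-coefficient of $u_*$ to vanish as well. Hence no basis element has a $v^{(i')}_0$-component, contradicting the fact that $\{u_0,\ldots,u_k,u_*\}$ spans $H_2(X^S_{i'};\mathbf{Z})$.

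The main obstacle is verifying the extended version of Lemma~\ref{lem:change of stein structure}: one must check that the Chern-class bound of magnitude $2$ pinned down on $U(0)^{(1)}_1$ transports correctly to the Stein structure on the boundary sum, and that condition $(\text{iii})'$ is precisely what is needed to beat the genus-$2$ adjunction bound coming from $u_*$. Everything else is a direct adaptation of the arguments already established in Sections~\ref{sec:main_algorithm} and~\ref{sec:variant}.
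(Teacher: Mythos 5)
Your argument is correct, but it takes a more hands-on route than the paper. The paper's proof is a one-line reduction: condition $(\text{iii})'$ is exactly condition (iii) of Definition~\ref{sec:the algorithm: def: p_i} (and (v) of Definition~\ref{sec:variant:def}) for the extra $2$-handle of $X\natural U(0)$ (which has $m_{k+1}=g_{k+1}=0$ and $q_{k+1}=2$), so $X^S_i=\widehat{X}^{(n)}_i\natural U(0)^{(1)}_1$ is identified with the output $\widehat{X\natural U(0)}^{(n)}_i$ of the general algorithm applied to the $2$-handlebody $X\natural U(0)$, and Theorem~\ref{sec:variant:main theorem} is then quoted verbatim. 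What you do instead is unpack that reduction: you keep the boundary-sum decomposition, verify directly that the extra class $w$ is a genus-$2$, square-$0$ class with $|\langle c_1,w\rangle|=2$ and a choosable sign (which is the correct computation: $p_1=2$, $t_0=-1$, $r_0=m_0=g_0=0$ for $U(0)$), extend Lemma~\ref{lem:change of stein structure} by additivity of $c_1$ across the boundary sum, and rerun the adjunction argument of Propositions~\ref{sec:main theorem:prop:genus} and~\ref{sec:variant:prop:non-existence} with one extra basis element, using $(\text{iii})'$ to kill the $v^{(i')}_0$-coefficient of $u_*$ against the bound $2g(u_*)-2-u_*^2=2$. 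The two proofs rest on identical mathematics; the paper's version buys brevity and reuses Theorem~\ref{sec:variant:main theorem} as a black box, while yours makes explicit why $(\text{iii})'$ is the precise numerical condition needed and would generalize more readily to summands that are not themselves outputs of the algorithm.
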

\begin{proof}The constructions in Sections~\ref{sec:main_algorithm} and \ref{sec:variant} and the above condition (iii)' show that each $X^S_i$ can be diffeomorphic to $\widehat{X\natural U(0)}^{(n)}_{i}$ which denotes the Legendrian handlebody in Theorem~\ref{sec:variant:main theorem} corresponding to $X\natural U(0)$. Theorem~\ref{sec:variant:main theorem} thus shows that $X^S_i$ $(1\leq i\leq n)$ are mutually non-diffeomorphic for any orientations.
\end{proof}

Let $v^{(i)}_j$ $(0\leq j\leq k)$ denote the elements of $H_2(\widehat{X}^{(n)}_{i};\mathbf{Z})$ in Definition~\ref{sec:algorithm:def:v}, corresponding to $\widehat{X}^{(n)}_{i}$ in Definition~\ref{sec:nonstein:def:nonstein} (recall that $\widehat{X}^{(n)}_{i}$ is defined as a special case of $X^{(n)}_{i}$). Let $w$ be the generator of $H_2(U(0)^{(1)}_{0};\mathbf{Z})$. Let $g_j$ $(0\leq j\leq k)$ and $m_j, q_j$ $(0\leq j\leq l)$ denote the integers in Section~\ref{sec:main_algorithm}, corresponding to the above $\widehat{X}^{(n)}_{i}$. Then the following lemma holds similarly to Lemma~\ref{sec:main:lem:genus}. 
 
\begin{lemma}\label{sec:nonstein:lem:genus1} For each $0\leq i\leq n$, the elements $v^{(i)}_0,v^{(i)}_1,\dots,v^{(i)}_k,w$ span a basis of $H_2(X^N_i;\mathbf{Z})$ and satisfy the following conditions $($ignore \textnormal{(ii)} when $k=0$$)$.
\smallskip
\begin{itemize}
  \item [(i)] $v^{(i)}_0$ is represented by a smoothly embedded surface of genus $g_0+p_i$ and satisfies $v^{(i)}_0\cdot v^{(i)}_0=m_0$.\smallskip 
 \item [(ii)] $v^{(i)}_j$ $(1\leq j\leq k)$ is represented by a smoothly embedded surface of genus $g_j+q_j$ and satisfies $v^{(i)}_j\cdot v^{(i)}_j=m_j$.\smallskip 
 \item [(iii)] $w$ is represented by a smoothly embedded sphere and satisfies $w^2=0$.
\end{itemize}
\end{lemma}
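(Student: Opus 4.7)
The plan is to exploit the boundary-sum decomposition $X^N_i = \widehat{X}^{(n)}_i \natural U(0)^{(1)}_0$ to split the second homology and then apply the genus estimates already established in Lemma~\ref{sec:main:lem:genus}.

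First, a standard Mayer--Vietoris argument for a boundary sum of compact oriented $4$-manifolds gives the orthogonal direct-sum splitting
\[
H_2(X^N_i;\mathbf{Z}) \;\cong\; H_2(\widehat{X}^{(n)}_i;\mathbf{Z}) \,\oplus\, H_2(U(0)^{(1)}_0;\mathbf{Z}),
\]
where orthogonality is with respect to the intersection form: representative surfaces for classes in the two summands can be pushed to opposite sides of the separating $3$-ball used to form the boundary sum, so they are disjoint and their intersection pairing vanishes. Consequently, genera and self-intersection numbers computed in either summand are preserved by the inclusion into $X^N_i$.

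Next, since $\widehat{X}^{(n)}_i$ is by construction the special case of $X^{(n)}_i$ pinned down by Definition~\ref{sec:variant:def}, Lemma~\ref{sec:main:lem:genus}.(1) applies verbatim. It yields a basis $v^{(i)}_0, v^{(i)}_1, \dots, v^{(i)}_k$ of $H_2(\widehat{X}^{(n)}_i;\mathbf{Z})$ with $v^{(i)}_0$ represented by a smoothly embedded surface of genus $g_0 + p_i$ and self-intersection $m_0$, and each $v^{(i)}_j$ $(1 \leq j \leq k)$ represented by a smoothly embedded surface of genus $g_j + q_j$ and self-intersection $m_j$. Under the inclusion of the first summand these surfaces deliver conditions (i) and (ii) in $X^N_i$.

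For the sphere representative of $w$, I would specialize the algorithm to $X = U(0)$, where $k = 0$, $l = 0$, $g_0 = 0$, $m_0 = 0$, so that Step~\ref{step2} and Step~\ref{step4} are vacuous and $U(0)^{(1)}_{-1} = U(0)^{(1)}_0$. Lemma~\ref{sec:main:lem:genus}.(2) then asserts that the generator, namely $w$, is represented by a smoothly embedded genus-$g_0$ surface — a $2$-sphere — with self-intersection $m_0 = 0$; alternatively, this is precisely the observation recorded in the $m = 0$ instance of Subsection~\ref{subsec:simplestexample}. Combining the two summands produces the basis $v^{(i)}_0,\dots,v^{(i)}_k, w$ with all the required geometric properties.

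The only potentially delicate point is the orthogonality assertion for the boundary-sum splitting, but this is routine since the representing surfaces are disjoint across the separating $3$-ball, so no real obstacle arises — the proof is essentially a bookkeeping exercise combining Lemma~\ref{sec:main:lem:genus} for each summand.
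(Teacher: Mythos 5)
Your proof is correct and matches what the paper intends: the paper merely asserts that this lemma "holds similarly to Lemma~\ref{sec:main:lem:genus}," and your argument — splitting $H_2(X^N_i;\mathbf{Z})$ orthogonally along the boundary sum, quoting Lemma~\ref{sec:main:lem:genus} for the $\widehat{X}^{(n)}_i$ summand, and observing that the generator of $H_2(U(0)^{(1)}_0;\mathbf{Z})$ is a square-zero sphere (as the paper itself notes in the proof of Lemma~\ref{sec:nonstein:lem:stein_nonstein}) — is exactly the bookkeeping the authors leave implicit.
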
\smallskip

We can easily check the following lemma similarly to Propositions~\ref{sec:main theorem:prop:genus} and \ref{sec:variant:prop:non-existence}. 

\begin{lemma}\label{sec:nonstein:lem:genus2}  For any $1\leq i\leq n$, there exists no basis $u_0,u_1,\dots,u_{k+1}$ of $H_2(X^N_i;\mathbf{Z})$ which satisfies the following conditions $($ignore \textnormal{(ii)} when $b_2(X)=1$$)$.\smallskip
\begin{itemize}
 \item [(i)] $u_0$ is represented by a smoothly embedded surface with its genus equal to or less than $g_0+p_{i-1}$ and satisfies $u^2_0=\lvert m_0\rvert$.\smallskip
 \item [(ii)] Each $u_j$ $(1\leq j\leq k)$ is represented by a smoothly embedded surface of genus $g_j+q_j$ and satisfies $u^2_j=\lvert m_j\rvert$.\smallskip
 \item [(iii)] $u_{k+1}$ is represented by a smoothly embedded sphere and satisfies $u_{k+1}^2=0$.
\end{itemize}
\end{lemma}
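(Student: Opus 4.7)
My plan is to reduce Lemma~\ref{sec:nonstein:lem:genus2} to Proposition~\ref{sec:variant:prop:non-existence} applied to the Stein handlebody $\widehat{X}^{(n)}_i$, exploiting the direct-sum decomposition $H_2(X^N_i;\mathbf{Z})=H_2(\widehat{X}^{(n)}_i;\mathbf{Z})\oplus\mathbf{Z}\langle w\rangle$ coming from the boundary-sum structure $X^N_i=\widehat{X}^{(n)}_i\natural U(0)^{(1)}_0$, together with the fact (Lemma~\ref{sec:nonstein:lem:genus1}) that $w$ is represented by an embedded sphere $S\subset X^N_i$ of self-intersection $0$ that has vanishing intersection number with every $v^{(i)}_l$. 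Suppose for contradiction such a basis $u_0,\dots,u_{k+1}$ exists, and write each $u_j=\tilde u_j+b^{(j)}w$ with $\tilde u_j\in H_2(\widehat{X}^{(n)}_i;\mathbf{Z})$ and $b^{(j)}\in\mathbf{Z}$.

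I first want to show $\tilde u_{k+1}=0$. By (iii) there is an embedded sphere $S_{k+1}\subset X^N_i$ representing $u_{k+1}$. Since $w$ has vanishing intersection number with every class, $S_{k+1}$ can be isotoped off $S$ and off $|b^{(k+1)}|$ parallel copies of $S$ inside its product neighborhood $S\times D^2$; tubing these parallel copies to $S_{k+1}$ with appropriate orientations yields a connected embedded sphere in $X^N_i$ representing $\tilde u_{k+1}$ of self-intersection $0$, because tubing disjoint components of a closed surface preserves the sum of genera and each parallel copy is a sphere. A standard compression-disk argument across the separating $3$-ball of the boundary sum then pushes this sphere into $\widehat{X}^{(n)}_i$ without raising its genus. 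But the adjunction inequality in the Stein handlebody $\widehat{X}^{(n)}_i$ forbids any non-zero class from being realised by an embedded sphere of self-intersection $0$, forcing $\tilde u_{k+1}=0$.

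Consequently $u_{k+1}=b^{(k+1)}w$ with $b^{(k+1)}=\pm 1$, so the remaining classes $\tilde u_0,\dots,\tilde u_k$ generate $H_2(\widehat{X}^{(n)}_i;\mathbf{Z})$ and, by rank count, form a basis. Repeating the tubing-and-compression reduction on each $u_j$ $(0\le j\le k)$ produces an embedded surface in $\widehat{X}^{(n)}_i$ representing $\tilde u_j$ whose genus is at most the bound imposed on $u_j$ by (i) or (ii); moreover the elementary computation $\tilde u_j^2=u_j^2-2b^{(j)}(u_j\cdot w)+(b^{(j)})^2 w^2=u_j^2$ gives $|\tilde u_j^2|=|m_j|$. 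Thus $\tilde u_0,\dots,\tilde u_k$ is a basis of $H_2(\widehat{X}^{(n)}_i;\mathbf{Z})$ meeting the hypotheses of Proposition~\ref{sec:variant:prop:non-existence}, providing the desired contradiction. The delicate point is verifying that the combined tubing-and-compression procedure really preserves the genus bounds at each step; once this is in hand, the argument reduces mechanically to the already-established Stein case.
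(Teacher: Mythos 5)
Your overall strategy --- reduce to the adjunction-inequality argument in the Stein manifold $\widehat{X}^{(n)}_i$ by killing the $w$-component --- is aimed at the right target, but the geometric reduction you propose contains a genuine gap, and indeed you flag it yourself (``the delicate point is verifying that the combined tubing-and-compression procedure really preserves the genus bounds''). Two specific problems. First, the assertion that ``since $w$ has vanishing intersection number with every class, $S_{k+1}$ can be isotoped off $S$'' is false as stated: in dimension $4$, algebraic intersection number zero does not allow two embedded surfaces to be made disjoint by isotopy; excess intersection points must be removed by tubing into parallel push-offs of $S$ (the Norman trick, which is available here because $S$ is a sphere of square zero), and your write-up conflates this with an isotopy. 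Second, and more seriously, the ``standard compression-disk argument across the separating $3$-ball'' that is supposed to push the resulting surface into $\widehat{X}^{(n)}_i$ without raising its genus is exactly the step that carries all the content; compressing along the separating $S^3$ generally disconnects the surface into pieces lying in both summands, and controlling the homology classes and genera of the pieces requires an argument you have not given. Without it, neither the claim $\tilde u_{k+1}=0$ nor the genus bounds on $\tilde u_0,\dots,\tilde u_k$ needed to invoke Proposition~\ref{sec:variant:prop:non-existence} are established.

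The paper's proof avoids all of this surface surgery by one observation you are missing: since $U(0)\cong S^2\times D^2$ embeds in the $4$-ball, Proposition~\ref{sec:attachment:prop:embed} shows $U(0)^{(1)}_0$ embeds in $D^4$, and hence $X^N_i=\widehat{X}^{(n)}_i\natural U(0)^{(1)}_0$ embeds in $\widehat{X}^{(n)}_i$ itself, with $w\mapsto 0$ and each $v^{(i)}_l$ mapped identically. Under this embedding the very same embedded surfaces representing $u_j=\sum_l a^{(j)}_l v^{(i)}_l+b^{(j)}w$ become surfaces of the same genus and the same square in the Stein manifold $\widehat{X}^{(n)}_i$, now representing $\sum_l a^{(j)}_l v^{(i)}_l$. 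The adjunction-inequality computation of Propositions~\ref{sec:main theorem:prop:genus} and~\ref{sec:variant:prop:non-existence} then forces $a^{(j)}_0=0$ for every $j=0,\dots,k+1$ (trivially so when the image class vanishes), so all $k+2$ basis elements lie in the rank-$(k+1)$ subgroup spanned by $v^{(i)}_1,\dots,v^{(i)}_k,w$, a contradiction. No modification of the surfaces, and no splitting off of the $w$-summand, is needed.
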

\begin{proof} Suppose that such a basis $u_0,u_1,\dots,u_{k+1}$ exists. Then each $u_j$ $(0\leq j\leq k+1)$ is a linear combination of $v^{(i)}_0,v^{(i)}_1,\dots,v^{(i)}_k,w$ by Lemma~\ref{sec:nonstein:lem:genus1}. Since $U(0)$ can be embedded into the 4-ball, Proposition~\ref{sec:attachment:prop:embed} shows that $U(0)^{(1)}_{0}$ can be embedded into the 4-ball. Thus, each $X^N_i$ can be embedded into $\widehat{X}^{(n)}_{i}$ so that $w$ is sent to $0$ and that $v^{(i)}_0,v^{(i)}_1,\dots,v^{(i)}_k$ are sent identically. Therefore we can apply the same argument as Propositions~\ref{sec:main theorem:prop:genus} and \ref{sec:variant:prop:non-existence}, and easily get the required claim. 
\end{proof}
We can now easily prove Theorem~\ref{sec:nonstein:thm:main0}. 
\begin{proof}[Proof of Theorem~\ref{sec:nonstein:thm:main0}]
Lemma~\ref{sec:nonstein:lem:stein_nonstein} gives the claims $(2)$ and $(3)$. Lemma~\ref{sec:nonstein:lem:stein_nonstein} also shows that, for any $i, j$, two 4-manifolds $X^S_i$ and $X^N_j$ are not diffeomorphic for any orientations. Since the sequence $p_i$ $(i\geq 0)$ is strictly increasing, Lemmas~\ref{sec:nonstein:lem:genus1} and \ref{sec:nonstein:lem:genus2} show that $X^N_i$ $(1\leq i\leq n)$ are mutually non-diffeomorphic for any orientations. Lemmas~\ref{sec:nonstein:lem:smooth:stein} and \ref{sec:nonstein:lem:stein_nonstein}.(3) thus give the claim $(1)$. 
\end{proof}
\begin{remark}Though we used $U(0)^{(1)}_{j}$ to define $X^S_i$ and $X^N_i$, we can similarly define $X^S_i$ and $X^N_i$, using $U(-1)^{(1)}_{j}$. Put $X^S_i:=\widehat{X}^{(n)}_i\natural U(-1)^{(1)}_1$ and $X^N_i:=\widehat{X}^{(n)}_i\natural U(-1)^{(1)}_0$, where we assume that $p_i$'s satisfy
\begin{itemize}
 \item [(iii)''] $2p_1+(t_0-1)-m_0+\rvert r_0\lvert -1> 0$, 
\end{itemize}
instead of (iii)' in Definition~\ref{sec:nonstein:def:nonstein}. In this case, Theorem~\ref{sec:nonstein:thm:main0} also holds, where we replace $X\natural U(0)$ with $X\natural U(-1)$ in the claim $(3)$. Moreover, each $X^N_i$ does admit a Stein structure after blowing down. \smallskip

We here give an outline of this proof. The claim corresponding to Lemmas~\ref{sec:nonstein:lem:stein_nonstein}, \ref{sec:nonstein:lem:smooth:stein} and \ref{sec:nonstein:lem:genus1} clearly holds. However, the claim corresponding to Lemma~\ref{sec:nonstein:lem:genus2} is not clear, because $U(-1)$ cannot be embedded into the 4-ball. Here notice that $U(-1)^{(1)}_{0}$ contains a 2-sphere with the self-intersection number $-1$, and that the blowdown of $U(-1)^{(1)}_{0}$ still has a Stein handlebody presentation. Using this fact, we can prove the claim corresponding to Lemma~\ref{sec:nonstein:lem:genus2} as follows. Since the blowdown of $X^N_i$ is a Stein handlebody, we can embed it into a minimal complex surface of general type (for this, the property similar to Theorem~\ref{sec:stein:th:stein_rotation} holds. cf.~\cite{GS}.). Then use the blow up formula and the adjunction inequality, and apply the argument in the proof of Lemma~\ref{sec:nonstein:lem:genus2}.
\end{remark}
$2$-handlebodies give a large class of $4$-manifolds with boundary. Actually, we easily get the following. 

\begin{corollary}\label{sec:non-stein:cor}$(1)$ For any finitely presented group $G$, there exist arbitrary many compact Stein $4$-manifolds and arbitrary many non-Stein $4$-manifolds such that they are mutually homeomorphic but not diffeomorphic and that their fundamental groups are isomorphic to $G$.\smallskip\\
$(2)$ For any integral symmetric bilinear form $Q$ over any integral free module, there exist arbitrary many simply connected compact Stein $4$-manifolds such that they are mutually homeomorphic but not diffeomorphic and that their intersection forms are isomorphic to $Q$.
\end{corollary}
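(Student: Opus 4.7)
The plan is to reduce both parts of the corollary to the main theorems already proved, by realizing the given algebraic data ($G$ or $Q$) as the fundamental group or intersection form of a starting 2-handlebody $X$ with $b_2(X)\geq 1$, and then invoking the appropriate construction.

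For part~(1), given a finite presentation $\langle g_1,\dots,g_m\mid r_1,\dots,r_s\rangle$ of $G$, I would build a 2-handlebody $X$ on $D^4$ with $m$ one-handles (one per generator) and $s$ two-handles attached along disjoint loops realizing the words $r_j$; this standard Kirby-diagrammatic construction yields $\pi_1(X)\cong G$. If $b_2(X)=0$, I would perform a boundary sum with $U(0)=S^2\times D^2$, which leaves $\pi_1$ unchanged (since $U(0)$ is simply connected) and raises $b_2$ by one, so I may assume $b_2(X)\geq 1$. Applying Theorem~\ref{sec:nonstein:thm:main0} to this $X$ produces the families $X^S_i,X^N_i$ ($1\leq i\leq n$); by part~(3) of that theorem their common fundamental group equals $\pi_1(X\natural U(0))\cong\pi_1(X)\cong G$, while parts~(1) and~(2) supply the Stein/non-Stein dichotomy and the required mutual homeomorphism without diffeomorphism.

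For part~(2), given $Q$ on a free module of positive rank $k$, I would realize $Q$ as the intersection form of a simply connected 2-handlebody $X$ with no 1-handles, by attaching $k$ two-handles to $D^4$ along a framed link in $S^3$ whose linking matrix equals $Q$ (diagonal entries specifying framings, off-diagonal entries realized by linking numbers between unknotted components). Then $X$ is simply connected with intersection form $Q$ and $b_2(X)=k\geq 1$. Applying Theorem~\ref{sec:variant:main theorem} to $X$ yields mutually homeomorphic but non-diffeomorphic compact Stein 4-manifolds $\widehat{X}^{(n)}_i$ ($1\leq i\leq n$); by the invariance clauses of Theorem~\ref{sec:algorithm1:main theorem}(1) inherited by $\widehat{X}^{(n)}_i$, each is simply connected and carries intersection form isomorphic to $Q$, as required.

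The only nontrivial ingredient is the observation that every finitely presented group arises as the fundamental group of a 2-handlebody and every integral symmetric bilinear form (on a free module of positive rank) arises as the intersection form of a simply connected 2-handlebody; both realizations are classical (see, e.g., Gompf--Stipsicz~\cite{GS}). Beyond these, the corollary is a direct consequence of Theorems~\ref{sec:nonstein:thm:main0} and~\ref{sec:variant:main theorem}, and I anticipate no substantial obstacle.
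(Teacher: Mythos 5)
Your proposal is correct and follows exactly the route the paper intends (the paper states the corollary without proof immediately after Theorem~\ref{sec:nonstein:thm:main0}, as an ``easy'' consequence): realize $G$ as $\pi_1$ of a $2$-handlebody with $b_2\geq 1$ and apply Theorem~\ref{sec:nonstein:thm:main0}, and realize $Q$ as the linking matrix of a framed link to get a simply connected $2$-handlebody and apply Theorems~\ref{sec:variant:main theorem}/\ref{sec:algorithm1:main theorem}. Your explicit restriction to positive rank in part~(2) is in fact necessary, since the paper's machinery requires $b_2\geq 1$, so your version is if anything slightly more careful than the statement as written.
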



\end{document}